\numberwithin{equation}{section}
\newtheorem{theorem}{Theorem}[section] 
\newtheorem{proposition}{Proposition}[section] 
\newtheorem{lemma}{Lemma}[section] 
\newtheorem{remark}{Remark}[section] 
\newcommand{\R}{\mathbb R} 
\newcommand{\C}{\mathbb C} 
\newcommand{\N}{\mathbb N} 
\renewcommand{\P}{\mathbb P}
\newcommand{\E}{\mathbb E}
\newcommand{\V}{\mathbb V}
\newcommand{\Cov}{\mathbb Cov}
\newcommand{\tr}{\text{tr}}
\newcommand{\Tr}{\text{Tr}}
\renewcommand\Re{\operatorname{\mathfrak{Re}}}
\renewcommand\Im{\operatorname{\mathfrak{Im}}}
\begin{document}
\title[Finite Rank Deformations]{On Finite Rank Deformations of Wigner Matrices}
\author[A. Pizzo]{Alessandro Pizzo}
\address{Department of Mathematics, University of California, Davis, One Shields Avenue, Davis, CA 95616-8633  }
\thanks{A.P. has been supported in part by the NSF grant DMS-0905988}
\email{pizzo@math.ucdavis.edu}
\author[D. Renfrew]{David Renfrew} \thanks{D.R. has been supported in part by the NSF grants VIGRE DMS-0636297, DMS-1007558, and 
DMS-0905988 }
\address{Department of Mathematics, University of California, Davis, One Shields Avenue, Davis, CA 95616-8633  }
\email{drenfrew@math.ucdavis.edu}
\author[A. Soshnikov]{Alexander Soshnikov}
\address{Department of Mathematics, University of California, Davis, One Shields Avenue, Davis, CA 95616-8633  }
\thanks{A.S. has been supported in part by the NSF grant DMS-1007558}
\email{soshniko@math.ucdavis.edu}
\begin{abstract}
We study the distribution of the outliers in the spectrum of finite rank deformations of Wigner random matrices
under the assumption that the absolute values of the off-diagonal matrix entries have uniformly bounded fifth moment and the absolute values of the 
diagonal entries have uniformly bounded third moment.  Using our recent results on the fluctuation of resolvent entries
\cite{PRS}, \cite{ORS} and ideas from \cite{BGM}, we extend
the results by Capitaine, Donati-Martin, and F\'eral \cite{CDF}, \cite{CDF1}.
\end{abstract}
\maketitle
\section{ \bf{Introduction and Formulation of Main Results}}
\label{sec:intro1}
Let $X_N= \frac{1}{\sqrt{N}} W_N $ be a random real symmetric (Hermitian) Wigner matrix with independent entries up from the diagonal.
In the real symmetric case, we assume that the off-diagonal entries 
\begin{equation}
\label{offdiagreal1}
(W_N)_{ij},\ 1\leq i<j\leq N,
\end{equation}
are independent random variables such that
\begin{equation}
\label{offdiagreal2}
\E (W_N)_{ij}=0, \ \V(W_N)_{ij}=\sigma^2, \  m_5:=\sup_{i\not=j, N} \E |(W_N)_{ij}|^5 <\infty,
\end{equation}
where $\E \xi $ denotes the mathematical expectation and $\V \xi $ the variance of a random variable $\xi.$
The diagonal entries  
\begin{equation}
\label{diagreal1}
(W_N)_{ii}, \ 1\leq i \leq N,
\end{equation}
are independent random variables (that are also independent from the off-diagonal entries), such that
\begin{equation}
\label{diagreal2}
\E (W_N)_{ii}=0, \ c_3:=\sup_{i,N} \E |(W_N)_{ii}|^3<\infty.
\end{equation}

In a similar fashion, in the Hermitian case, we assume that the off-diagonal entries
\begin{equation}
\label{offdiagherm1}
\Re (W_N)_{ij}, \Im (W_N)_{ij}, \ 1\leq i<j \leq N,
\end{equation}
are independent random variables such that
\begin{equation}
\label{offdiagherm2}
\E (W_N)_{ij}=0, \ \V [\Re (W_N)_{ij}]=\V [\Im (W_N)_{ij}]=\frac{\sigma^2}{2}, \  m_5:=\sup_{i\not=j, N} \E |(W_N)_{ij}|^5 <\infty.
\end{equation}
The diagonal entries  
\begin{equation}
\label{diagherm1}
(W_N)_{ii}, \ 1\leq i \leq N,
\end{equation}
are independent centered random variables, independent from the off-diagonal entries, with uniformly bounded  third moment of the absolute values.

For a  real symmetric (Hermitian) matrix $M$ of order $N,$ its empirical distribution of the eigenvalues is defined as 
$\mu_M = \frac{1}{N} \sum_{i=1}^{N} \delta_{\lambda_{i}},$ where  $\lambda_1 \geq \ldots \geq \lambda_N$ are the (ordered) eigenvalues of $M.$
Wigner semicircle law (see e.g. \cite{BG}, \cite{AGZ}, \cite{B}) states 
that almost surely the empirical distribution $\mu_{X_N}$ of a random real symmetric (Hermitian) Wigner matrix $X_N$ converges weakly 
to the nonrandom limiting distribution $\mu_{sc}.$ The limiting distribution 
$\mu_{sc} $ is known as the semicircle distribution.  It is absolutely continuous with respect to the Lebesgue measure and has 
the compact support $[-2 \sigma , 2 \sigma].$  The density of the Wigner semicircle distribution
is given by 
\begin{equation}
\label{polukrug}
\frac{d \mu_{sc}}{dx}(x) = \frac{1}{2 \pi \sigma^2} \sqrt{ 4 \sigma^2 - x^2} \mathbf{1}_{[-2 \sigma , 2 \sigma]}(x).
\end{equation}
Wigner semicircle law can be reformulated as follows.
For any bounded continuous test function $\varphi: \R \to \R,$ the linear statistic 
\[ \frac{1}{N} \sum_{i=1}^N \varphi(\lambda_i) = \frac{1}{N} \*\Tr(\varphi(X_N))=:\tr_N(\varphi(X_N)) \]
converges to $\int \varphi(x) \* d \mu_{sc}(dx) $ almost surely; here and throughout the paper, we 
use the notation $\tr_N = \frac{1}{N} \Tr$ to denote the normalized trace.

The Stieltjes transform of the semicircle law is
\begin{equation}
\label{steltsem} 
g_\sigma(z) := \int \frac{d \mu_{sc}(x)}{z-x}= \frac{z-\sqrt{z^2-4\*\sigma^2}}{2\*\sigma^2}, \ z \in \C \backslash [-2\*\sigma, 2\*\sigma].
\end{equation}
It is the solution to 
\begin{equation}
\label{semicircle}
\sigma^2 g_\sigma^2(z) - z g_\sigma(z) + 1 = 0
\end{equation}
that decays to 0 as $|z| \to \infty$. 

In this paper, we study the fluctuations of the outliers in the spectrum of finite-dimensional deformations of Wigner matrices.
Starting with the pioneering work by F\"uredi  and Koml\'os \cite{FK}, there have been several results on finite rank 
perturbations of matrices with i.i.d. entries, in particular \cite{P}, \cite{FP}, \cite{Mai}, \cite{CDF}, \cite{CDF1}, \cite{CDF2}, \cite{BR}, 
\cite{BGM}, \cite{BGM1}, \cite{T}. We also note several papers on the eigenvalues of sample covariance matrices of spiked population models 
(\cite{BY}, \cite{BBP}, \cite{BS}, \cite{Paul}).  

This manuscript can be viewed as a companion paper to our recent works \cite{PRS} and \cite{ORS} on the non-Gaussian 
fluctuation of the matrix entries of regular functions of Wigner matrices.  However, no knowledge of the machinery used in \cite{PRS} and \cite{ORS} 
is required, and the paper can be read independently from these papers.

Let us consider a deformed Wigner matrix 
\begin{equation}
\label{MN}
M_N := \frac{1}{\sqrt{N}} W_N + A_N = X_N + A_N.
\end{equation}
Here $W_N$ is a random real symmetric (Hermitian) Wigner matrix as defined in (\ref{offdiagreal1}-\ref{diagreal2}) ((\ref{offdiagherm1}-\ref{diagherm1})),
and $A_N$ is a deterministic Hermitian 
matrix of fixed finite rank $r.$  
We assume that the eigenvalues of $A_N$ and their multiplicities are fixed.  
Let 
$$\theta_1 > \ldots > \theta_{j_0}=0 > \ldots  > \theta_J$$  
be the eigenvalues 
of $A_N$ each with fixed multiplicity $k_j$.  Clearly, the eigenvalue $\theta_{j_0}=0 $ has multiplicity $N-r$ and 
$ \sum_{j\neq j_0} k_j =r.$  

The first theorem of this section, Theorem \ref{thm:outliers}, concerns the convergence of the extreme eigenvalues of the deformed random matrix.  
Let us denote $\rho_\theta = \theta + \frac{\sigma^2}{\theta}. $  
We shall use the shorthand notation $ \rho_j $ for $\rho_{\theta_j}. $
Theorem \ref{thm:outliers} was originally proved by Capitaine, Donati-Martin, and Feral in \cite{CDF} in the case when the common marginal 
distribution of the matrix entries is symmetric and 
satisfies a Poincar\'e inequality.  
\begin{theorem} 
\label{thm:outliers}
Let $X_N=\frac{1}{\sqrt{N}} W_N$ be a random real symmetric (Hermitian) Wigner matrix
satisfying (\ref{offdiagreal1}-\ref{diagreal2}) (respectively (\ref{offdiagherm1}-\ref{diagherm1})).
Let $J_{\sigma^+}$ be the number of $j$'s such that $\theta_j > \sigma$ and  $J_{\sigma^-}$ be the number of $j$'s such that $\theta_j < -\sigma$.
\begin{enumerate}[(a)]
\item For all $j = 1, \ldots, J_{\sigma^+}$ and $i = 1, \ldots, k_j$, $\lambda_{k_1 + \ldots + k_{j-1}+i} \to \rho_j,$ 
\item $\lambda_{k_1 + \ldots + k_{J_{\sigma^+}}+ 1} \to 2 \sigma,$
\item $\lambda_{k_1 + \ldots + k_{J - J_{\sigma^- }} } \to -2 \sigma,$
\item For all $j = J- J_{\sigma^-}+1 , \ldots, J$ and $i = 1, \ldots, k_j$, $\lambda_{k_1 + \ldots + k_{j-1}+i} \to \rho_j.$
\end{enumerate}
The convergence is in probability.  
\end{theorem}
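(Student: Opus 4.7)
My plan is to reduce the eigenvalue problem for $M_N$ to a finite-dimensional secular equation and then identify the limit using the concentration of bilinear resolvent forms from \cite{PRS}, \cite{ORS}. Diagonalize the perturbation as $A_N = U D U^{*}$, where $U \in \C^{N \times r}$ has orthonormal columns (the eigenvectors of $A_N$ with nonzero eigenvalue) and $D$ is the invertible $r \times r$ diagonal matrix listing the nonzero $\theta_j$'s each with multiplicity $k_j$. For any $\lambda \notin \operatorname{spec}(X_N)$, the matrix-determinant (Sylvester) identity yields
\[
\det(\lambda I_N - M_N) \;=\; \det(\lambda I_N - X_N)\,\det\!\bigl(I_r - D\,\Gamma_N(\lambda)\bigr), \qquad \Gamma_N(\lambda) := U^{*}(\lambda I_N - X_N)^{-1} U.
\]
Thus the eigenvalues of $M_N$ lying outside $\operatorname{spec}(X_N)$ are precisely the zeros of the analytic function $\lambda \mapsto \det(I_r - D\,\Gamma_N(\lambda))$, which has the same zero set as $\det(D^{-1} - \Gamma_N(\lambda))$.

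The key analytic input is that for each deterministic pair $u,v$ of bounded unit vectors and each $\lambda_0 \in \R \setminus [-2\sigma, 2\sigma]$, the concentration statement $u^{*}(\lambda_0 I_N - X_N)^{-1} v \to g_\sigma(\lambda_0)\,\langle u,v\rangle$ holds in probability; this is extracted from \cite{PRS}, \cite{ORS}. Applied entrywise to the columns of the fixed $N\times r$ matrix $U$, this gives $\Gamma_N(\lambda_0) \to g_\sigma(\lambda_0) I_r$ in probability. Since $\lambda \mapsto \Gamma_N(\lambda)$ is analytic on $\C \setminus \operatorname{spec}(X_N)$ with operator norm bounded by $\operatorname{dist}(\lambda, \operatorname{spec}(X_N))^{-1}$, together with $\|X_N\| \to 2\sigma$ in probability (classical under bounded $4$-th off-diagonal moment), a normal-families argument upgrades this to locally uniform convergence on compact subsets of $\C \setminus [-2\sigma, 2\sigma]$. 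Applying Hurwitz's theorem to $\det(D^{-1} - \Gamma_N(\lambda))$, whose limit is $\prod_{j \neq j_0}(\theta_j^{-1} - g_\sigma(\lambda))^{k_j}$, shows that with probability tending to one the zeros of the pre-limit function cluster with matching multiplicities around the zeros of the limit. Substituting $g_\sigma(\lambda)=\theta_j^{-1}$ into the quadratic $\sigma^2 g_\sigma^2(\lambda) - \lambda g_\sigma(\lambda) + 1 = 0$ gives $\lambda = \theta_j + \sigma^2/\theta_j = \rho_j$, which lies outside $[-2\sigma, 2\sigma]$ exactly when $|\theta_j| > \sigma$. This yields statements (a) and (d): for each such $j$, precisely $k_j$ eigenvalues of $M_N$ converge in probability to $\rho_j$.

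For (b) and (c) I would combine this outlier count with Weyl's interlacing inequalities $\lambda_{i+r}(X_N) \le \lambda_i(M_N) \le \lambda_{i-r}(X_N)$ and the convergence $\lambda_{\max}(X_N)\to 2\sigma$, $\lambda_{\min}(X_N) \to -2\sigma$ in probability. Since the outliers above $2\sigma$ account for exactly the top $k_1 + \cdots + k_{J_{\sigma^{+}}}$ eigenvalues of $M_N$ (and similarly on the negative side), the next indexed eigenvalue on each side is squeezed to $\pm 2\sigma$. The main obstacle in this scheme is establishing the isotropic concentration of the bilinear resolvent form $u^{*}(\lambda I - X_N)^{-1} v$ under only the bounded fifth off-diagonal moment and bounded third diagonal moment: under a Poincar\'e inequality (as assumed in \cite{CDF}) this is immediate, but here it must be extracted from the entry-level fluctuation machinery of \cite{PRS}, \cite{ORS}, and this is precisely the technical gain over \cite{CDF} that drives the theorem. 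Everything else---Sylvester, Hurwitz, and Weyl---is deterministic bookkeeping.
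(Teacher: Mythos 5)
Your reduction is the same as the paper's: the Sylvester identity $\det(\lambda I_N-M_N)=\det(\lambda I_N-X_N)\det(I_r-D\,\Gamma_N(\lambda))$ is exactly the lemma the paper borrows from \cite{BGM} (Lemma \ref{Lemma12}), the limiting equation $g_\sigma(\lambda)=\theta_j^{-1}$ giving $\rho_j$ is the same, and the interlacing argument for (b),(c) is the paper's remark verbatim (note, though, that you need the $l$-th largest eigenvalue of $X_N$ to converge to $2\sigma$ for fixed $l$, which uses the semicircle law in addition to $\lambda_{\max}(X_N)\to2\sigma$, since the interlacing lower bound involves $\lambda_{k_{\sigma^+}+1+r}(X_N)$). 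Your Vitali/Hurwitz localization of the zeros is a legitimate softer alternative to the paper's route, which instead proves the stronger tightness statement (Theorem \ref{thm:fluctoutliers}) by a quantitative grid/Lipschitz argument on the real interval and deduces Theorem \ref{thm:outliers} from it.

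The genuine gap is the step you defer: the claim that $\Gamma_N(\lambda)\to g_\sigma(\lambda)I_r$ can be obtained by applying the results of \cite{PRS}, \cite{ORS} ``entrywise to the columns of $U$.'' In Theorem \ref{thm:outliers} the eigenvectors of $A_N$ are arbitrary, $N$-dependent unit vectors (possibly fully delocalized), so $\langle u, R_N(z)v\rangle=\sum_{i,j}\bar u_i R_{ij}v_j$ involves $N^2$ resolvent entries, and the entry-level estimates of \cite{PRS}, \cite{ORS} (e.g. $\E R_{ij}=O(N^{-1})$ off the diagonal and $\V R_{ij}=O(N^{-1})$) are far too weak to be summed; they only suffice when the columns of $U$ are fixed, finitely supported vectors, i.e. in Case A. The isotropic concentration $\E\langle u,R_N(z)v\rangle=g_\sigma(z)\langle u,v\rangle+O(N^{-1/2})$ and $\V\langle u,R_N(z)v\rangle=O(N^{-1})$ for arbitrary deterministic unit vectors, under only bounded fifth off-diagonal and third diagonal moments, is precisely Proposition \ref{proposition:prop1} and Theorem \ref{thm:quadratic} of the paper, proved in Sections \ref{sec:mathvar}--\ref{sec:proofquadratic} by a fresh master-equation analysis (decoupling formula, cumulant expansions, Helffer--Sj\"ostrand calculus for the passage to test functions and to real $x$ outside the bulk); it is not a corollary of the entry-level machinery. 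You correctly flag this as the main obstacle, but as written your proposal assumes the paper's principal technical result rather than proving it, so the argument is incomplete exactly where the work lies.
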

In other words, the first $k_1$ largest eigenvalues of $M_N$ converge to $\rho_1, $ the next $k_2$ largest eigenvalues converge 
to $\rho_2, \ldots,$ the $J_{\sigma^+}$th bunch of the largest eigenvalues converge to $\rho_{J_{\sigma^+}},$   the next largest 
eigenvalue converges to $2\*\sigma$ (since it corresponds to a nonnegative eigenvalue of $A_N$ which is not bigger than $\sigma$), etc.

\begin{remark}
If random variables $(W_N)_{ij}, \ 1\leq i\leq j\leq N,$ satisfy a  Poincar\'e inequality (\ref{poin}) with constant $\upsilon_{i,j,N}$ uniformly bounded
from zero, $\upsilon_{i,j,N}\geq \upsilon>0,$ the convergence holds with probability one.
\end{remark}
We recall that a probability measure  $\P$ on $\R^M$ satisfies a Poincar\'e inequality with constant $\upsilon>0$ if, for all continuously differentiable 
functions 
$f: \R^M \to \C,$
\begin{equation}
\label{poin}
\V_{\P}(f)= \E_{\P} \left(|f(x)-\E_{\P}(f(x))|^2\right) \leq \frac{1}{\upsilon}\* \E_{\P}[ |\nabla f(x)|^2 ]
\end{equation} 
Note that the Poincar\'e inequality tensorizes and the probability measures satisfying the Poincar\'e inequality
have subexponential tails (\cite{GZ}, \cite{AGZ}) . In particular, if the marginal distributions of the matrix entries of $W_N$
satisfy the Poincar\'e inequality with constant $\upsilon>0,$ then the joint distribution of $(W_N)_{ij}, \ 1\leq i\leq j\leq N,$ also satisfies
the Poincar\'e inequality with the same constant $\upsilon.$
By a standard scaling argument, we note that if the marginal distributions of the matrix entries of $W_N$
satisfy the Poincar\'e inequality with 
$\upsilon>0$ then the marginal distributions of the matrix entries of $X_N=\frac{1}{\sqrt{N}}\*W_N $ satisfy the Poincar\'e inequality with constant
$N\*\upsilon.\ $  

Theorem \ref{thm:outliers} follow from Theorem \ref{thm:fluctoutliers} formulated below.
Theorem \ref{thm:fluctoutliers} is concerned with the distribution of the outliers, i.e. the eigenvalues of $M_N$ corresponding to 
$\theta_j >\sigma.$  Namely, we are interested in the fluctuation of the outliers around $\rho_j, \ 1 \leq j \leq J_{\sigma^+}. $
Let us consider  a fixed eigenvalue
$\theta_j $ of $A_N$ such that $\theta_j> \sigma.$
In general, if one does not assume some additional information about the structure of the eigenvectors of $A_N$ corresponding to $\theta_j,$ 
the sequence of random vectors
\begin{equation}
\label{ura}
\Big( \sqrt{N}\*(\lambda_{k_1+\ldots +k_{j-1}+i} -\rho_j), \ i=1, \ldots, k_j \Big) 
\end{equation}
does not converge in distribution as $N \to \infty$  (see Theorems \ref{thm:caseA} and \ref{thm:caseB} below).
However, one can prove that the sequence (\ref{ura}) is bounded in probability (tight). We recall 
that a sequence $\{\xi_N\}_{N\geq 1} $ of $\R^m$-dimensional random vectors is bounded in probability if for any $\varepsilon>0 $ there 
exists $L(\varepsilon)$ that does not depend on $N$ such that $\P(|\xi_N|>L(\varepsilon))<\varepsilon $ for all $N\geq 1.$
\begin{theorem} 
\label{thm:fluctoutliers}
Let $X_N=\frac{1}{\sqrt{N}} W_N$ be a random real symmetric (Hermitian) Wigner matrix
defined in (\ref{offdiagreal1}-\ref{diagreal2}) (respectively (\ref{offdiagherm1}-\ref{diagherm1})).
Let $1 \leq j \leq J_{\sigma^+},$ so
the eigenvalue $\theta_j $ of $A_N$ satisfies $\theta_j> \sigma$.
Then the sequence of random vectors 
\begin{equation}
\label{past}
\Big( \sqrt{N}\*(\lambda_{k_1+\ldots +k_{j-1}+i} -\rho_j), \ i=1, \ldots, k_j \Big) 
\end{equation}
is bounded in probability.
In addition, if the marginal distributions of the matrix entries of $W_N$
satisfy the Poincar\'e inequality (\ref{poin}) with constant $\upsilon_{i,j,N}$ uniformly bounded
from zero, the following holds with probability $1$
\begin{equation}
\label{smeshno}
\lambda_{k_1+\ldots +k_{j-1}+i} -\rho_j = O\left(\frac{\log N}{\sqrt{N}}\right),  \ i=1, \ldots, k_j.
\end{equation}
\end{theorem}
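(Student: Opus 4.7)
The plan is to reduce the problem to the standard master equation for outliers. Write $A_N = U_N D U_N^*$ where $U_N \in \C^{N\times r}$ has orthonormal columns arranged so that $D=\mathrm{diag}(\theta_1 I_{k_1},\ldots,\theta_J I_{k_J})$ (with the zero eigenvalue omitted), and let $G_N(z) = (zI-X_N)^{-1}$. Any real $\lambda\notin \mathrm{Spec}(X_N)$ is an eigenvalue of $M_N$ if and only if
\[
\det\bigl(D^{-1} - H_N(\lambda)\bigr)=0, \qquad H_N(\lambda):=U_N^* G_N(\lambda) U_N\in \C^{r\times r}.
\]
Theorem \ref{thm:outliers} already implies that with probability tending to one, the $k_j$ eigenvalues $\lambda_{k_1+\ldots+k_{j-1}+i}$ all lie in a small fixed interval $I_j\ni \rho_j$ disjoint from $\{\rho_\ell\}_{\ell\neq j}\cup[-2\sigma,2\sigma]$, so on $I_j$ the master equation exactly locates them.

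The heart of the argument is to control the deviation $\Delta_N(z):=H_N(z)-g_\sigma(z)I_r$. For fixed deterministic unit vectors $u,v\in\C^N$ the isotropic fluctuation estimate
\[
\langle u,(G_N(z)-g_\sigma(z)I)v\rangle = O_{\P}\!\bigl(N^{-1/2}\bigr),\qquad z\in I_j,
\]
is precisely the entry-level resolvent fluctuation obtained in \cite{PRS,ORS} (with the reduced moment hypotheses of this paper), and it gives $\|\Delta_N(z)\|=O_{\P}(N^{-1/2})$ in operator norm since $r$ is fixed. Using $g_\sigma(\rho_j)=1/\theta_j$ and the Taylor expansion
\[
g_\sigma(\lambda) = \tfrac{1}{\theta_j}+g_\sigma'(\rho_j)(\lambda-\rho_j)+O\bigl((\lambda-\rho_j)^2\bigr),\qquad g_\sigma'(\rho_j)=-\tfrac{\theta_j^2}{\theta_j^2-\sigma^2}\neq 0,
\]
the master equation becomes
\[
\det\!\Bigl(\bigl(D^{-1}-\tfrac{1}{\theta_j}I_r\bigr)-g_\sigma'(\rho_j)(\lambda-\rho_j)I_r - \Delta_N(\lambda)+O\bigl((\lambda-\rho_j)^2\bigr)\Bigr)=0.
\]
The block $D^{-1}-\tfrac{1}{\theta_j}I_r$ vanishes on the $k_j$-dimensional $j$-th eigenspace of $A_N$ and is invertible on its orthogonal complement. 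A Schur complement reduction (valid because the non-$j$ blocks stay uniformly invertible under perturbations of size $O_{\P}(N^{-1/2})+o(1)$) reduces the equation to the $k_j\times k_j$ identity
\[
\det\Bigl(-g_\sigma'(\rho_j)(\lambda-\rho_j)I_{k_j} - \Delta_N^{(j,j)}(\rho_j) + O_{\P}\bigl(N^{-1/2}|\lambda-\rho_j|+|\lambda-\rho_j|^2\bigr)\Bigr)=0,
\]
whose $k_j$ roots are, to leading order, the eigenvalues of $-\Delta_N^{(j,j)}(\rho_j)/g_\sigma'(\rho_j)$. Since these are $O_{\P}(N^{-1/2})$, the vector (\ref{past}) is tight.

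Under the uniform Poincar\'e assumption, each entry $(H_N(\rho_j))_{pq}=\langle u_p,G_N(\rho_j)u_q\rangle$ is, on the overwhelmingly likely event that $\|X_N\|\leq 2\sigma + \tfrac12(\rho_j-2\sigma)$, a smooth function of the matrix entries of $W_N$ with Lipschitz constant $O(1/\sqrt{N})$. The tensorized Poincar\'e inequality then gives sub-exponential concentration of each entry around its mean with scale $O(1/\sqrt{N})$, while the mean is $g_\sigma(\rho_j)\delta_{pq}+O(1/N)$ by a standard resolvent identity. A union bound over the finitely many pairs $(p,q)$ together with Borel--Cantelli applied at threshold $C\log N/\sqrt{N}$ yield the almost sure bound $\|\Delta_N^{(j,j)}(\rho_j)\|=O(\log N/\sqrt{N})$, and the reduction of the previous paragraph converts this into the a.s.\ estimate \eqref{smeshno}. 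The main obstacle is the uniform Schur reduction: one must confirm that the reduced $k_j\times k_j$ equation has exactly $k_j$ roots in $I_j$ counted with multiplicity, which will require either a Rouch\'e-type counting argument on the reduced determinant or a continuity-in-$A_N$ / interlacing argument that tracks the $k_j$ eigenvalues as the perturbation is switched on; in addition one needs to ensure that \cite{PRS,ORS} supply the isotropic bound for arbitrary orthonormal $u_p$ under the present fifth/third-moment hypotheses.
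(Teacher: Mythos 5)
Your overall route is the same as the paper's (the BGM determinant identity $\det(D^{-1}-U_N^*R_N(\lambda)U_N)=0$, reduction to the $k_j\times k_j$ block at $\rho_j$, and a linearization of $g_\sigma$ to trade the outlier fluctuations for the eigenvalues of the block of $\sqrt N(U_N^*R_N(\rho_j)U_N-g_\sigma(\rho_j)I)$), but as written it has genuine gaps. The most serious one is circularity: you localize the outliers in a small interval $I_j$ by invoking Theorem \ref{thm:outliers}, yet under the present moment assumptions (fifth moment off-diagonal, third moment diagonal) Theorem \ref{thm:outliers} is exactly what this paper derives \emph{from} Theorem \ref{thm:fluctoutliers}; the only prior proof (\cite{CDF}) requires symmetric entries satisfying a Poincar\'e inequality. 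The paper avoids this by never assuming convergence: it couples $W_N$ to a matrix with $\|X_N\|\to2\sigma$ a.s., controls $\Xi_N(x)$ \emph{uniformly} on the whole interval $[2\sigma+2\delta,L]$ (Lemma \ref{Lemma11}, via a grid of mesh $N^{-1/3}$, Chebyshev, and a deterministic Lipschitz bound), and then uses monotonicity of $g_\sigma$ together with the multiplicity statement of Lemma \ref{Lemma12} to locate and count the solutions of $g_\sigma(x)=z_i(x)$. Your pointwise statement ``$\Delta_N(\rho_j)=O_\P(N^{-1/2})$'' plus an expansion at the random point $\lambda$ is not enough: the replacement $\Delta_N(\lambda)=\Delta_N(\rho_j)+O_\P(N^{-1/2}|\lambda-\rho_j|)$ presupposes an $O_\P(N^{-1/2})$ (or at least $o_\P(1)$) bound on the derivative of $\Delta_N$ uniformly over the interval, which requires precisely the chaining/grid argument of Lemma \ref{Lemma11}; a deterministic $O(1)$ Lipschitz bound would leave an error of the same order as the main term $g_\sigma'(\rho_j)(\lambda-\rho_j)$.

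The second gap is your main input: the ``isotropic'' estimates $\langle u,(R_N(z)-g_\sigma(z))v\rangle=O_\P(N^{-1/2})$ and $\E\langle u,R_N(z)v\rangle=g_\sigma(z)\langle u,v\rangle+O(N^{-1/2})$ for \emph{arbitrary} deterministic unit vectors are not in \cite{PRS}, \cite{ORS}, which treat matrix entries (standard basis vectors) only. Since the eigenvectors of $A_N$ are arbitrary in Theorem \ref{thm:fluctoutliers}, the entry-level results do not suffice, and proving the sesquilinear-form bounds (Proposition \ref{proposition:prop1} and Theorem \ref{thm:quadratic}, via the decoupling formula, the variance self-consistency in Lemma \ref{LemmaVar}, and Helffer--Sj\"ostrand) constitutes the bulk of Sections \ref{sec:mathvar}--\ref{sec:proofquadratic}; you flag this but do not supply it, and your $O(1/N)$ claim for the bias ``by a standard resolvent identity'' overstates what is available (the paper only gets, and only needs, $O(N^{-1/2})$). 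Finally, the counting issue you leave as ``the main obstacle'' is resolved in the paper not by Rouch\'e but by the multiplicity clause of Lemma \ref{Lemma12} combined with the elementary block-perturbation Lemma \ref{Lemma177}, which compares the relevant eigenvalues of $Z_N(\rho_j)$ with those of its $j$-th diagonal block up to $O(1/N)$; your Schur-complement reduction could be made to work, but as it stands the identification of exactly $k_j$ roots near $\rho_j$ is unproven. The Poincar\'e part of your sketch is in the right spirit, but it too relies on the missing isotropic bias bound and on uniform-in-$x$ almost-sure control (the analogue of (\ref{cvet1})--(\ref{maroc1})), not just concentration at the single point $\rho_j$.
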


\begin{remark}
Theorem \ref{thm:fluctoutliers} clearly implies parts (a) and (d) of Theorem \ref{thm:outliers}.  To see that parts (b) and (c) of 
Theorem \ref{thm:outliers} also follow, we note that for any fixed positive integer $l\geq 1$ the $l$-th largest eigenvalue of $X_N$ converges in 
probability
to $2\*\sigma.$   This is a simple consequence of the convergence of the largest eigenvalue of $X_N$ to $2\*\sigma$ and the semicircle law.  Then the 
interlacing property and Theorem \ref{thm:fluctoutliers} imply the desired result.
\end{remark}

\begin{remark}
The bound (\ref{smeshno}) means that there exists a sufficiently large deterministic constant $C=C(\sigma, \upsilon, \theta_1, \ldots, \theta_r)>0, $ 
such that with probability $1$
\begin{equation*}
|\lambda_{k_1+\ldots +k_{j-1}+i} -\rho_j|\leq \frac{C\*\log N}{\sqrt{N}},  \ i=1, \ldots, k_j,
\end{equation*}
for all but finitely many $N.$
\end{remark}
To study the fluctuations of the outliers in more detail, we consider two special cases following \cite{CDF1}.

{\bf Case A} (``The eigenvectors don't spread out'')

The orthonormal eigenvectors of $A_N$ corresponding to $\theta_j$ depend on a finite number $K_j$ of canonical basis vectors of $\C^N$ (without loss 
of generality we can assume those canonical vectors to be $e_1, \ldots, e_{K_j}$), and their 
coordinates are independent of $N.$

{\bf Case B} (``The eigenvectors are delocalized'')

The $l^{\infty}$ norm of every orthonormal eigenvector of $A_N$ corresponding to $\theta_j$ goes to zero as $N \to \infty. $

Following \cite{CDF1}, we denote 
by $k_{\sigma^+}= k_1 +\ldots +k_{J_{\sigma^+}}$ the number of positive eigenvalues of $A_N$ bigger than $\sigma$ (counting with multiplicities) and
by $k\geq k_{\sigma^+} $ the minimal number of canonical basis vectors $e_1, \ldots, e_N$ of $\C^N$ required to span all the eigenvectors 
corresponding to the eigenvalues $\theta_1, \ldots, \theta_{J_{\sigma^+}}. $  

We also denote
\begin{equation}
\label{ctheta}
c_{\theta_j}:= \frac{\theta_j^2}{\theta_j^2-\sigma^2}.
\end{equation}
The next theorem is a consequence of Proposition \ref{Prop1} below and Theorems 1.1 and 1.5 in \cite{PRS}. We use a standard notation 
$\beta=1$ in the real symmetric case and $\beta=2$ in the Hermitian case.
\begin{theorem} 
\label{thm:caseA}
Let $X_N=\frac{1}{\sqrt{N}} W_N$ be a random real symmetric (Hermitian) Wigner matrix
defined in (\ref{offdiagreal1}-\ref{diagreal2}) (respectively (\ref{offdiagherm1}-\ref{diagherm1})) such that the off-diagonal entries
$(W_N)_{ij}, \ 1\leq i<j\leq N,$ are i.i.d. real (complex) random variables with probability distribution $\mu$ and the diagonal entries 
$(W_N)_{ii}, \ 1\leq i< N,$ are i.i.d. random variables with probability distribution $\mu_1.$
In Case A, the $k_j$-dimensional vector 
\[ \Big( c_{\theta_j}\* \sqrt{N}\*(\lambda_{k_1+\ldots +k_{j-1+i}} -\rho_j), \ i=1, \ldots, k_j \Big) \]
converges in distribution to the distribution of the ordered eigenvalues of the $k_j \times k_j$ random matrix $V_j$ defined as
\begin{equation}
\label{vj}
V_j:=U^*_j \* (W_j + H_j) \* U_j,
\end{equation}
where 

(i) $W_j$ is a Wigner random matrix of size $K_j$ with the same marginal distribution of the matrix entries as $W_N,$ 

(ii) $H_j$ is a real symmetric (Hermitian) 
Gaussian matrix of size $K_j,$ independent of $W_j,$ with centered independent entries $H_{st}, \ 1 \leq s \leq t \leq K_j,
(\Re H_{st}, \Im H_{st}, \ 1 \leq s <t \leq K_j, \ H_{pp}, 1 \leq p \leq K_j$ in the Hermitian case)
with
the variance of the entries given by
\begin{align}
\label{ovec1}
& \E(H^2_{ss})=\frac{\kappa_4(\mu)}{\theta_j^2} + 
\frac{2}{\beta}\* \frac{\sigma^4}{\theta_j^2-\sigma^2}, \ s=1, \ldots, K_j, \\
\label{ovec2}
& \E(H_{st}^2)= \frac{\sigma^4}{\theta_j^2-\sigma^2}, \ 1\leq s<t\leq K_j, \ \text{in the real symmetric case},\\
\label{ovec3}
& \E( (\Re H_{st})^2)=\E( (\Im H_{st})^2)= \frac{\sigma^4}{2\*(\theta_j^2-\sigma^2)}, \ 1\leq s<t\leq K_j, \ \text{in the Hermitian case},
\end{align}
where
\begin{equation}
\label{chetkum}
\kappa_4(\mu):=\int |x|^4 \*d\mu(x) - (4-\beta)\* (\int |x|^2 \*d\mu(x))^2,
\end{equation}
is the fourth cumulant of $\mu,$ and 

(iii) $U_j$ is a $K_j\times k_j$ such that the ($K_j$-dimensional) columns of $U_j$ are written from the first $K_j$ coordinates of the orthonormal 
eigenvectors
corresponding to $\theta_j.$
\end{theorem}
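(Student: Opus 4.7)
I would begin with the classical determinantal characterization of outliers. Writing $A_N = \hat U_A \Theta_A \hat U_A^{\,*}$ for the rank-$r$ spectral decomposition (with $\hat U_A$ an $N\times r$ matrix of orthonormal eigenvectors and $\Theta_A$ the $r\times r$ diagonal of nonzero eigenvalues), a real $\lambda$ outside the spectrum of $X_N$ is an eigenvalue of $M_N$ if and only if
\begin{equation*}
\det\bigl(I_r - \Theta_A\, \hat U_A^{\,*} G_N(\lambda)\, \hat U_A\bigr) = 0, \qquad G_N(\lambda) := (\lambda I - X_N)^{-1}.
\end{equation*}
The off-diagonal entries of $\hat U_A^{\,*} G_N(\lambda) \hat U_A$ across distinct eigenspaces of $A_N$ have fluctuations of order $N^{-1/2}$, so a Schur-complement argument (precisely the content of Proposition \ref{Prop1} below) localizes the outliers near $\rho_j$ to the solutions of the $k_j \times k_j$ equation $\det\bigl(I_{k_j} - \theta_j\, \hat U_j^{\,*} G_N(\lambda) \hat U_j\bigr) = 0$, where $\hat U_j$ denotes the sub-block of $\hat U_A$ associated to $\theta_j$, up to corrections of order $1/N$ that are invisible on the $1/\sqrt N$ scale of interest.

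In Case A, $\hat U_j^{\,*} G_N(\lambda)\, \hat U_j = U_j^{*}\, \mathbf G_N^{K_j}(\lambda)\, U_j$, where $\mathbf G_N^{K_j}(\lambda)$ is the top-left $K_j \times K_j$ corner of $G_N(\lambda)$ and $U_j$ is the deterministic matrix of item (iii). Decomposing $\mathbf G_N^{K_j}(\lambda) = g_\sigma(\lambda) I_{K_j} + R_N(\lambda)$ and exploiting $U_j^{*} U_j = I_{k_j}$, the localized equation reads
\begin{equation*}
\bigl(1 - \theta_j g_\sigma(\lambda)\bigr)\, I_{k_j} \;=\; \theta_j\, U_j^{*} R_N(\lambda)\, U_j.
\end{equation*}
Since $g_\sigma(\rho_j) = 1/\theta_j$ and $g_\sigma'(\rho_j) = -1/(\theta_j^2-\sigma^2)$, the substitution $\lambda = \rho_j + s/(c_{\theta_j}\sqrt N)$ and a Taylor expansion of $g_\sigma$ yield
\begin{equation*}
s\, I_{k_j} \;=\; \theta_j^2\sqrt N\, U_j^{*} R_N(\rho_j)\, U_j + o(1).
\end{equation*}
Hence, by continuous dependence of eigenvalues on the matrix, the rescaled outliers $c_{\theta_j}\sqrt N(\lambda-\rho_j)$ converge jointly in distribution to the ordered eigenvalues of the limit of $\theta_j^2\sqrt N\, U_j^{*} R_N(\rho_j) U_j$, provided the $K_j \times K_j$ matrix $\theta_j^2\sqrt N\,(\mathbf G_N^{K_j}(\rho_j) - g_\sigma(\rho_j) I_{K_j})$ converges in distribution to $W_j + H_j$.

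This last CLT is the content of Theorems 1.1 and 1.5 in \cite{PRS}. Heuristically, a second Schur complement makes the structure transparent: partition $X_N$ into its top-left block $W_j/\sqrt N$, its top-right block $B^{*}$ of size $K_j \times (N-K_j)$, and its bottom-right block $C$. Then
\begin{equation*}
\mathbf G_N^{K_j}(\lambda) \;=\; \bigl(\lambda I_{K_j} - W_j/\sqrt N - B^{*}(\lambda - C)^{-1} B\bigr)^{-1}.
\end{equation*}
Writing $B^{*}(\lambda - C)^{-1} B = \sigma^2 g_\sigma(\lambda) I_{K_j} + \Xi_N(\lambda)$ with $\Xi_N$ of order $N^{-1/2}$ and independent of $W_j$, and invoking the self-consistency $\lambda - \sigma^2 g_\sigma(\lambda) = 1/g_\sigma(\lambda)$, a first-order expansion at $\lambda = \rho_j$ gives $\theta_j^2 \sqrt N\, R_N(\rho_j) = W_j + \sqrt N\, \Xi_N(\rho_j) + o(1)$. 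The CLT for the quadratic form in the independent entries of $B$ then identifies $\sqrt N\, \Xi_N(\rho_j)$ with the Gaussian matrix $H_j$; a direct variance computation using $\sum_i [(\rho_j - C)^{-1}]_{ii}^{\,2} \sim N/\theta_j^2$, $\tr(\rho_j - C)^{-2} \sim N/(\theta_j^2-\sigma^2)$, and the fourth-cumulant formula (\ref{chetkum}) reproduces exactly (\ref{ovec1})--(\ref{ovec3}). The two main technical hurdles are (i) uniform control of the expansion remainder on a $1/\sqrt N$ neighborhood of $\rho_j$, and (ii) upgrading entrywise CLTs for $R_N(\rho_j)$ to joint convergence of all $K_j^2$ entries with the stated covariance structure---both supplied by the PRS machinery, after which continuous dependence of eigenvalues on the matrix closes the argument.
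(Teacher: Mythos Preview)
Your proposal is correct and follows essentially the same route as the paper: both arguments invoke Proposition~\ref{Prop1} (the determinantal localization) to reduce the outlier fluctuations to the eigenvalues of $\theta_j^2\,\Xi_N^{(j)}$, observe that in Case~A one has $\Xi_N^{(j)} = U_j^{*}\,\Upsilon_N(\rho_j)\,U_j$ with $\Upsilon_N$ the $K_j\times K_j$ resolvent-corner field, and then cite the CLT of \cite{PRS} for $\Upsilon_N(\rho_j)\Rightarrow g_\sigma^2(\rho_j)(W_j+H_j)=\theta_j^{-2}(W_j+H_j)$, checking the variances via $g_\sigma(\rho_j)=1/\theta_j$ and $g_\sigma'(\rho_j)=-1/(\theta_j^2-\sigma^2)$. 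Your additional Schur-complement heuristic for why the PRS limit splits as $W_j+H_j$ is helpful exposition but not a different strategy; one caution is that your symbol $R_N(\lambda)$ for the fluctuation part collides with the paper's $R_N(z)$ for the resolvent itself.
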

In \cite{CDF1}, Theorem \ref{thm:caseA} was proved for symmetric marginal distribution satisfying the 
Poincar\'e inequality (\ref{poin})
under an additional technical assumption that 
$k=o(\sqrt{N}), \ $ where $k$ is defined in the paragraph above (\ref{ctheta}).  

Using Theorems 4.1 and 4.2 from \cite{ORS}, one can extend the results of Theorem \ref{thm:caseA} to the case when the entries of $W_N$ are not 
identically distributed provided the distribution of the entries $(W_N)_{il}, \ 1\leq i,l\leq K_j$ does not depend on $N.$

\begin{theorem} 
\label{thm:caseA1}
Let $X_N=\frac{1}{\sqrt{N}} W_N$ be a random real symmetric (Hermitian) Wigner matrix
defined in (\ref{offdiagreal1}-\ref{diagreal2}) (respectively (\ref{offdiagherm1}-\ref{diagherm1})) such that
the distribution of the entries $(W_N)_{il}, \ 1\leq i,l\leq K_j$ does not depend on $N.$
Let us assume that the limits
\begin{equation}
\label{predel100}
m_4(i):=\lim_{N\to \infty} \frac{1}{N}\*\sum_{l: l\not=i} \E |(W_N)_{il}|^4
\end{equation}
exist for $1\leq i\leq K_j.$

Then in case A, the results of Theorem \ref{thm:caseA} hold with $\kappa_4(\mu)$ in (\ref{ovec1}) replaced by 
\begin{equation}
\label{chetkums}
\kappa_4(s):= m_4(s)-(4-\beta)\*\sigma^2, \ s=1, \ldots, K_j.
\end{equation}
\end{theorem}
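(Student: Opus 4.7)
The plan is to repeat, essentially verbatim, the proof of Theorem \ref{thm:caseA}, substituting the non-identically-distributed CLT for resolvent entries of \cite[Theorems 4.1, 4.2]{ORS} at the single place where the i.i.d. CLT of \cite{PRS} was used.

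First I would apply Proposition \ref{Prop1} (below) to express, up to an $o_\P(1)$ error, the rescaled outliers $c_{\theta_j}\sqrt{N}(\lambda_{k_1+\cdots+k_{j-1}+i}-\rho_j)$, $i=1,\ldots,k_j$, as the ordered eigenvalues of
\begin{equation*}
U_j^*\bigl(W_j^{(N)} + \sqrt{N}\,\widetilde G_N^{(K_j)}(\rho_j)\bigr)U_j,
\end{equation*}
where $W_j^{(N)}$ is the upper-left $K_j\times K_j$ corner of $W_N$ and $\widetilde G_N^{(K_j)}(\rho_j)$ is the corresponding $K_j\times K_j$ sub-block of the centered resolvent $G_N(\rho_j) - g_\sigma(\rho_j) I$ with the self-contribution of $W_j^{(N)}/\sqrt{N}$ removed. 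The hypothesis that the law of $(W_N)_{il}$ for $1\leq i,l\leq K_j$ does not depend on $N$ ensures that $W_j^{(N)}$ is distributionally identical to the fixed Wigner matrix $W_j$ of the statement.

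Next I would invoke \cite[Theorems 4.1, 4.2]{ORS} at the real spectral parameter $z=\rho_j>2\sigma$ (which lies outside the support of $\mu_{sc}$) to obtain joint weak convergence of $\bigl(W_j^{(N)},\sqrt{N}\,\widetilde G_N^{(K_j)}(\rho_j)\bigr)$ to an independent pair $(W_j, H_j)$, with $H_j$ a centered Gaussian matrix. Independence of the limits follows because, by construction, $\widetilde G_N^{(K_j)}(\rho_j)$ is a function only of those entries $(W_N)_{st}$ with at least one of $s,t$ exceeding $K_j$, which are independent of $W_j^{(N)}$ by the Wigner hypothesis.

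The remaining task is to identify the covariance of $H_j$. The off-diagonal variances read off from \cite{ORS} depend only on $\sigma^2$ and reproduce (\ref{ovec2})--(\ref{ovec3}). The diagonal variance formula contains a term proportional to $\theta_j^{-2}\cdot N^{-1}\sum_{l\neq s}\E|(W_N)_{sl}|^4$, which by (\ref{predel100}) converges to $m_4(s)/\theta_j^2$, plus a Gaussian-Wick complement that combines to $\frac{2}{\beta}\sigma^4/(\theta_j^2-\sigma^2)$; collecting terms one recovers exactly (\ref{ovec1}) with $\kappa_4(s)$ from (\ref{chetkums}) in place of $\kappa_4(\mu)$. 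An application of the continuous mapping theorem to the ordered-eigenvalue functional on $k_j\times k_j$ Hermitian matrices then delivers convergence of the rescaled outliers to the ordered spectrum of $V_j=U_j^*(W_j+H_j)U_j$. The only step requiring more than bookkeeping is the independence assertion in the joint convergence above; once this is extracted from \cite[Theorems 4.1, 4.2]{ORS}, every other step is a routine adaptation of the proof of Theorem \ref{thm:caseA}.
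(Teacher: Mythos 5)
Your proposal follows the paper's own route: the paper proves Theorem \ref{thm:caseA1} simply by repeating the proof of Theorem \ref{thm:caseA} --- Proposition \ref{Prop1} together with the identity $\Xi^{(j)}_N=U_j^*\,\Upsilon_N(\rho_j)\,U_j$ and the CLT for the resolvent corner --- with Theorems 4.1, 4.2 and Remark 4.1 of \cite{ORS} substituted for Theorem 1.1 of \cite{PRS}, which is exactly the substitution you make and which yields $\kappa_4(s)$ in place of $\kappa_4(\mu)$ in (\ref{ovec1}). The one imprecision is your claim that independence holds ``by construction'' because $\widetilde G_N^{(K_j)}(\rho_j)$ depends only on entries with an index exceeding $K_j$ --- the centered resolvent block does depend on the corner entries, and the asymptotic independence of the corner Wigner block and the Gaussian part is precisely what the cited theorems of \cite{ORS} supply, as you acknowledge in your final sentence.
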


The next theorem deals with the Case B. 
\begin{theorem}
\label{thm:caseB}
Let $X_N=\frac{1}{\sqrt{N}} W_N$ be a random real symmetric (Hermitian) Wigner matrix
defined in (\ref{offdiagreal1}-\ref{diagreal2}) (respectively (\ref{offdiagherm1}-\ref{diagherm1})) 
such that the off-diagonal entries
$(W_N)_{ij}, \ 1\leq i<j\leq N,$ are i.i.d. random variables with probability distribution $\mu$ and the diagonal entries 
$(W_N)_{ii}, \ 1\leq i< N,$ are i.i.d. random variables with probability distribution $\mu_1.$
In Case B, the $k_j$-dimensional vector 
\[ \Big( c_{\theta_j}\* \sqrt{N}\*(\lambda_{k_1+\ldots +k_{j-1}+i} -\rho_j), \ i=1, \ldots, k_j \Big) \]
converges in distribution to the distribution of the (ordered) eigenvalues of a $k_j\times k_j$ GOE (GUE) matrix with the variance of the matrix 
entries 
given by $\frac{\theta_j^2\*\sigma^2}{\theta_j^2-\sigma^2}$  provided $k=o(\sqrt{N}).$  
\end{theorem}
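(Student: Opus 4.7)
\emph{Proof strategy.} The plan is to reduce the problem to a central limit theorem for a small bilinear block of the resolvent $G_N(z) := (zI - X_N)^{-1}$, and then to invoke the machinery of \cite{PRS}. Write $A_N = U\Theta U^*$, where $U \in \C^{N\times r}$ collects the orthonormal eigenvectors of $A_N$ associated to the nonzero eigenvalues and $\Theta$ is block diagonal with blocks $\theta_l I_{k_l}$. For $\lambda \notin \mathrm{spec}(X_N)$, the identity $\det(I+BC)=\det(I+CB)$ gives the secular equation
\[
\det\!\bigl(\, U^* G_N(\lambda) U - \Theta^{-1}\,\bigr) = 0.
\]
A direct computation yields $g_\sigma(\rho_j)=1/\theta_j$, and semicircle asymptotics plus control on bilinear forms give $U^* G_N(\rho_j) U \to g_\sigma(\rho_j) I_r$ in probability. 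Consequently only the $j$-th diagonal block of the secular matrix is singular to leading order; the diagonal blocks with $l\neq j$ are uniformly invertible, and the cross-blocks $U_l^* G_N(\rho_j) U_j$ are $O(1/\sqrt{N})$ by the CLT for bilinear forms of $G_N$. A Schur complement argument (the same one that underlies Theorem \ref{thm:fluctoutliers}) reduces the outlier equation near $\rho_j$ to
\[
\det B_j(\lambda) = 0, \qquad B_j(\lambda) := U_j^* G_N(\lambda) U_j - \tfrac{1}{\theta_j}\, I_{k_j}.
\]

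Now Taylor-expand $B_j$ about $\rho_j$. By Theorem \ref{thm:fluctoutliers} the outliers lie at distance $O_{\P}(1/\sqrt{N})$ from $\rho_j$. Using $G_N' = -G_N^2$, together with $\tr_N G_N(\rho_j)^2 \to -g_\sigma'(\rho_j) = 1/(\theta_j^2-\sigma^2)$ and the delocalization of the columns of $U_j$ (which, under Case B with $k=o(\sqrt{N})$, gives $U_j^* G_N(\rho_j)^2 U_j \to \frac{1}{\theta_j^2-\sigma^2} I_{k_j}$ in probability), the linearization produces
\[
\sqrt{N}\bigl(\lambda_{k_1+\cdots+k_{j-1}+i}-\rho_j\bigr) = (\theta_j^2-\sigma^2)\, \mu_i\!\bigl(\sqrt{N}\, B_j(\rho_j)\bigr) + o_{\P}(1),
\]
where $\mu_i$ is the $i$-th ordered eigenvalue. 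Multiplying by $c_{\theta_j}=\theta_j^2/(\theta_j^2-\sigma^2)$, it suffices to show that the Hermitian $k_j\times k_j$ matrix
\[
\theta_j^2\,\sqrt{N}\, B_j(\rho_j) \,=\, \theta_j^2\,\sqrt{N}\,\bigl[\, U_j^* G_N(\rho_j) U_j - \tfrac{1}{\theta_j}\, I_{k_j}\,\bigr]
\]
converges in distribution to a GOE (respectively GUE) matrix with entrywise variance $\theta_j^2\sigma^2/(\theta_j^2-\sigma^2)$; continuity of the eigenvalue map on Hermitian matrices then transfers this to convergence of the ordered eigenvalues.

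The heart of the proof is this last CLT. Applying Theorems 1.1 and 1.5 of \cite{PRS} to the delocalized unit vectors $u_1,\dots,u_{k_j}$ forming the columns of $U_j$, each entry $\sqrt{N}\,[u_a^* G_N(\rho_j) u_b - g_\sigma(\rho_j)\delta_{ab}]$ is asymptotically a centered real (respectively complex) Gaussian with variance $\sigma^2/(\theta_j^2(\theta_j^2-\sigma^2))$. The hypothesis $\|u_l\|_\infty\to 0$ is essential: it suppresses every ``localized'' piece of the variance formula provided by \cite{PRS}, in particular the $\kappa_4(\mu)$ contribution that survived in Case A, leaving only the $\sigma^2$-driven term. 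Orthonormality of the $u_l$'s then gives the required asymptotic independence of distinct entries together with the correct real-symmetric or Hermitian symmetry, so after multiplication by $\theta_j^2$ the limit is exactly the prescribed GOE/GUE.

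The main obstacle will be the joint CLT in Case B. In Case A, \cite{PRS} applies essentially directly because all relevant bilinear forms live on a bounded block of $G_N$; here, delocalization forces an unbounded number of resolvent entries to combine into a single Gaussian limit. The condition $k=o(\sqrt{N})$ enters at exactly this point: it simultaneously renders the cross blocks $U_l^* G_N(\rho_j) U_j$ with $l\neq j$ negligible (so the $\theta_j$-block truly decouples from the rest of the secular matrix), keeps the coordinate weights of the $u_l$'s dilute enough for a Lindeberg/Lyapunov-type CLT to apply to the relevant bilinear forms, and controls the errors from both the Taylor expansion and the Schur complement reduction at the $1/\sqrt{N}$ scale. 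Organising these three estimates in a uniform way and checking that the resulting joint Gaussian limit has the GOE/GUE covariance profile is the principal technical work.
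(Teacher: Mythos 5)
Your reduction steps are sound and essentially coincide with the paper's framework: the secular equation $\det\left(U^*R_N(\lambda)U-\Theta^{-1}\right)=0$ is Lemma \ref{Lemma12} (taken from \cite{BGM}), and the decoupling of the $\theta_j$-block plus the linearization around $\rho_j$ is exactly Proposition \ref{Prop1}, so reducing the statement to a distributional limit for $\sqrt{N}\left(U_j^*R_N(\rho_j)U_j-\theta_j^{-1}I_{k_j}\right)$ is fine. The genuine gap is in the step you call the heart of the proof. Theorems 1.1 and 1.5 of \cite{PRS} cannot be ``applied to the delocalized unit vectors'': those theorems concern the entries of $R_N(z)$ (equivalently, bilinear forms in vectors supported on a fixed finite set of coordinates, i.e.\ precisely Case A), and their limit $\Upsilon(z)=g_\sigma^2(z)(W^{(m)}+Y(z))$ explicitly retains the non-Gaussian Wigner block $W^{(m)}$ and the $\kappa_4(\mu)$-dependent variance. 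There is no variance formula in \cite{PRS}, and no statement in the present paper either, that produces a Gaussian limit with the GOE/GUE covariance for vectors whose $l^\infty$ norm tends to zero; indeed Theorem \ref{thm:quadratic} of this paper deliberately gives only tightness of $\sqrt{N}\left(\langle u^{(N)},f(X_N)v^{(N)}\rangle-\E\langle u^{(N)},f(X_N)v^{(N)}\rangle\right)$ for general unit vectors, with the introduction noting that convergence in distribution may fail without structural assumptions. The CLT you need --- that delocalization washes out both the $\kappa_4$ term and the non-Gaussian $W$ contribution, that distinct entries become asymptotically independent with the correct $2/\beta$ diagonal factor, and that $k=o(\sqrt{N})$ suffices to control all of this --- is exactly the content of the Case B analysis of Capitaine, Donati-Martin, and F\'eral \cite{CDF1}, and it is what the paper invokes: its proof of Theorem \ref{thm:caseB} consists of establishing Theorem \ref{thm:outliers} (and Theorem \ref{thm:fluctoutliers}) under the weaker moment hypotheses and then observing that the arguments of \cite{CDF1} go through unchanged. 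Your closing paragraph concedes that this joint CLT is ``the principal technical work'' but does not supply it, so as it stands the proposal is a correct outline of the reduction with its central analytic step missing (and justified by an inapplicable citation).
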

\begin{remark}
We recall that $k$ has been defined above as 
the minimal number of canonical basis vectors $e_1, \ldots r_N$ required to span the eigenvectors corresponding to the eigenvalues
$\theta_1, \ldots \theta_{J_{\sigma^+}}.$
\end{remark}
Theorem  \ref{thm:caseB} is an immediate extension of the result of Capitaine, Donati-Martin, and F\'eral from \cite{CDF1} to our setting since their 
arguments apply essentially unchanged as soon as Theorem \ref{thm:outliers} is established.

It should be noted that Benaych-Georges, Guionnet, and Maida consider in \cite{BGM} perturbations of a random Wigner matrix by a 
finite rank random matrix with eigenvectors that
are either independent copies of a random vector $v$ with i.i.d. centered components satisfying the log-Sobolev inequality or are obtained by 
Gram-Schmidt orthonormalization of such independent copies.  
The distribution of the outliers is given in Proposition 5.3. of \cite{BGM}.
Let us denote the distribution of the first component of $v$ by $\nu. $  If the fourth cumulant $\kappa_4(\nu)$ of $\nu$ vanishes, the limiting 
distribution of the outliers is similar to the result of Theorem \ref{thm:caseB}, and given by the distribution of the ordered eigenvalues
of a GOE (GUE) matrix.  If the fourth cumulant does not vanish, one has to add a diagonal matrix with i.i.d. real Gaussian entries to a GOE (GUE) matrix.

One of the most important results of \cite{BGM}, \cite{BGM1} concerns the distribution of the ``sticking'' eigenvalues (i.e. the eigenvalues that 
correspond to $|\theta_j| < \sigma).$  In Theorem 5.3 of \cite{BGM},  Benaych-Georges, Guionnet, and Maida prove that their 
limiting distribution is given by the Tracy-Widom law.

Let us briefly describe a key ingredient of the proofs of Theorems \ref{thm:fluctoutliers}-\ref{thm:caseA1}.
We use the notation
\begin{equation}
\label{rezolventa}
R_N(z):=(z\*I_N-X_N)^{-1}, \ z\in \C \setminus [-2\*\sigma, 2\*\sigma],
\end{equation}
for the resolvent of $X_N.$
Clearly, $R_N(z)$ is well defined for $ z\in \C \setminus  \R.$   Since
the spectral norm of $X_N$ converges to $2\*\sigma$ in probability (see e.g. \cite{BYin}, and Proposition 2.1 in \cite{ORS}), 
$R_N(x)$  is well defined for a fixed $x \in \R \setminus [-2\*\sigma, 2\*\sigma]$ with probability going to one.
Since our results will deal with the limiting distribution of random variables 
$\sqrt{N}\left(\langle u, R_N(x) \* v \rangle - g_{\sigma}(x) \* \langle u, v \rangle \right)$ in the limit $N\to \infty,$ 
this should not lead to ambiguity.

Let us consider  a fixed eigenvalue
$\theta_j $ of $A_N$ such that $\theta_j> \sigma$ and denote by $v^{(1)}, \ldots, v^{(k_j)} $ the orthonormal 
eigenvectors of $A_N$ that correspond to the 
eigenvalue $\theta_j. $  Denote by $\Xi^{(j)}_N $ the $k_j\times k_j $ matrix with the entries
\begin{equation}
\label{thetamatrica}
\Xi^{(j)}_{il}:=\sqrt{N}\* \left(\langle v^{(i)}, R_N(\rho_j) v^{(l)}\rangle - g_{\sigma}(\rho_j) \*\delta_{il}\right)=
\sqrt{N}\* \left(\langle v^{(i)}, R_N(\rho_j) v^{(l)}\rangle - \frac{1}{\theta_j}\*\delta_{il}\right),  
\end{equation}
where we recall that $\rho_j=\theta_j +\frac{\sigma^2}{\theta_j}\ $.
The following proposition plays an important part in our proofs.
\begin{proposition}
\label{Prop1}
Let $y_1 \geq \ldots \geq y_{k_j}$ be the ordered eigenvalues of the matrix $\Xi^{(j)}_N.$
Then
\begin{equation}
\label{uzheuzhe}
\sqrt{N}\*(\lambda_{k_1+\ldots +k_{j-1}+i} -\rho_j)+ \frac{1}{g_{\sigma}'(\rho_j)} \* y_i \to 0,
\ i=1, \ldots, k_j,
\end{equation}
in probability.
\end{proposition}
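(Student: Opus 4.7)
The plan is to reduce the eigenvalue problem for $M_N=X_N+A_N$ to a finite-dimensional determinantal equation in the resolvent $R_N(\lambda)$, and then expand this equation around $\lambda=\rho_j$ on the $1/\sqrt{N}$ scale. Writing $A_N=VDV^*$, where the columns of the $N\times r$ matrix $V$ are the orthonormal eigenvectors of $A_N$ with nonzero eigenvalue and $D$ is the $r\times r$ diagonal matrix of those eigenvalues, the standard identities $\det(\lambda I-M_N)=\det(\lambda I-X_N)\det(I-R_N(\lambda)A_N)$ and $\det(I-AB)=\det(I-BA)$ show that the outlier eigenvalues of $M_N$ coincide with the zeros of
\[
\mathcal{F}_N(\lambda):=\det\bigl(D^{-1}-V^*R_N(\lambda)V\bigr).
\]
I would then partition $\mathcal{M}_N(\lambda):=D^{-1}-V^*R_N(\lambda)V$ into blocks according to the distinct eigenvalues of $A_N$, isolating the $k_j\times k_j$ block $\mathcal{M}_{jj}(\lambda)$ attached to $\theta_j$ from the complementary $(r-k_j)\times(r-k_j)$ block $\mathcal{M}_{**}(\lambda)$ and the off-diagonal blocks $\mathcal{M}_{j*}(\lambda),\mathcal{M}_{*j}(\lambda)$.

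Next, I would parameterize $\lambda=\rho_j+t/\sqrt{N}$ with $t$ in a bounded set. The identity $g_\sigma(\rho_j)=1/\theta_j$ together with the definition of $\Xi^{(j)}_N$ immediately gives $\mathcal{M}_{jj}(\rho_j)=-N^{-1/2}\Xi^{(j)}_N$. Combining the resolvent expansion $R_N(\rho_j+t/\sqrt{N})=R_N(\rho_j)-(t/\sqrt{N})R_N(\rho_j)^2+O(N^{-1})$ with the fluctuation estimate $\langle v,R_N(z)^2 v'\rangle+g_\sigma'(z)\langle v,v'\rangle=O(N^{-1/2})$ in probability for deterministic unit vectors $v,v'$---which follows by $z$-differentiating the resolvent CLT/tightness bounds of \cite{PRS},\cite{ORS}---this upgrades to
\[
\mathcal{M}_{jj}\bigl(\rho_j+t/\sqrt{N}\bigr)=-\frac{1}{\sqrt{N}}\bigl(\Xi^{(j)}_N+t\,g_\sigma'(\rho_j)\,I_{k_j}\bigr)+o\bigl(N^{-1/2}\bigr).
\]
The orthogonality $\langle v^{(j,i)},v^{(j',l)}\rangle=0$ for $j'\neq j$ together with the same resolvent bounds gives $\mathcal{M}_{j*}(\lambda),\mathcal{M}_{*j}(\lambda)=O(N^{-1/2})$, while the diagonal blocks of $\mathcal{M}_{**}(\lambda)$ converge to the invertible matrices $(\theta_{j'}^{-1}-\theta_j^{-1})I_{k_{j'}}$, so $\mathcal{M}_{**}(\lambda)^{-1}$ is uniformly bounded with high probability.

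Applying the Schur complement with respect to $\mathcal{M}_{**}$ reduces $\mathcal{F}_N(\lambda)=0$ to $\det(\mathcal{M}_{jj}-\mathcal{M}_{j*}\mathcal{M}_{**}^{-1}\mathcal{M}_{*j})=0$, and the correction term is of order $O(N^{-1/2})\cdot O(1)\cdot O(N^{-1/2})=O(N^{-1})$, negligible compared to the leading $N^{-1/2}$-scale $\mathcal{M}_{jj}$. Multiplying through by $(-\sqrt{N})^{k_j}$ gives
\[
\det\bigl(\Xi^{(j)}_N+t\,g_\sigma'(\rho_j)\,I_{k_j}\bigr)+o(1)=0.
\]
Since $\Xi^{(j)}_N$ is bounded in probability and the roots of a polynomial depend continuously on its coefficients, the $k_j$ solutions $t$ cluster near $t_i=-y_i/g_\sigma'(\rho_j)$; substituting $t=\sqrt{N}(\lambda-\rho_j)$ then yields (\ref{uzheuzhe}). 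A parallel analysis, localizing the zeros of $\mathcal{F}_N$ to $O(N^{-1/2})$-neighborhoods of each $\rho_{j'}$ with $|\theta_{j'}|>\sigma$ and ruling out zeros in the complement of the semicircle support via the same Schur argument, ensures that the zeros we locate are precisely the outliers of $M_N$, so no outlier is missed.

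The main obstacle is securing the $1/\sqrt{N}$-scale bounds on the bilinear resolvent forms $\langle v,R_N(z)v'\rangle-g_\sigma(z)\langle v,v'\rangle$ and their $z$-derivatives uniformly in $z$ over an $N^{-1/2}$-window around $\rho_j$. Pointwise tightness at fixed $z$ is exactly what the resolvent CLT of \cite{PRS},\cite{ORS} provides; uniformity in $z$ over the $N^{-1/2}$-window is then obtained from a covering argument together with the Lipschitz continuity of $R_N(z)$ away from $[-2\sigma,2\sigma]$, combined with the convergence of $\|X_N\|$ to $2\sigma$ in probability that guarantees $\rho_j$ remains outside the spectrum with high probability.
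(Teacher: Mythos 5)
Your argument is correct in outline and rests on the same two pillars as the paper's proof: the reduction of the outlier equation to the $r\times r$ determinant $\det\bigl(\Theta^{-1}-U_N^*R_N(\lambda)U_N\bigr)=0$, which is exactly Lemma \ref{Lemma12} (borrowed from \cite{BGM}), and the $N^{-1/2}$-scale control of bilinear resolvent forms supplied by Theorem \ref{thm:quadratic}. The execution, however, differs. The paper keeps the full matrix $Z_N(x)=\Theta^{-1}-N^{-1/2}\Xi_N(x)$, controls its eigenvalue curves $z_i(x)$ uniformly over the fixed interval $[2\sigma+2\delta,L]$ via the grid argument of Lemma \ref{Lemma11} (yielding the Lipschitz bound (\ref{maroc})), intersects them with the strictly decreasing function $g_\sigma$, and decouples the $\theta_j$-block through the elementary perturbation Lemma \ref{Lemma177}, Taylor-expanding $g_\sigma$ at $\rho_j$; you instead zoom in to $\lambda=\rho_j+t/\sqrt N$, Taylor-expand the resolvent in $\lambda$, eliminate the complementary blocks by a Schur complement, and conclude by continuity of polynomial roots. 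Your route is cleaner locally (the Schur complement replaces Lemma \ref{Lemma177}, and uniformity over an $O(N^{-1/2})$ window is cheap), whereas the paper's global control of the curves $z_i(x)$ is what it later upgrades, under the Poincar\'e hypothesis, to the almost sure bound (\ref{smeshno}); your local expansion would need to be supplemented for that. Two steps of your sketch need tightening: the estimate $\langle v,R_N(\rho_j)^2v'\rangle+g_\sigma'(\rho_j)\langle v,v'\rangle=O(N^{-1/2})$ in probability cannot be obtained by ``differentiating'' a tightness statement — the correct route is to apply (\ref{ska1})--(\ref{ska2}) to the test function $f^{(x)}(t)=-h(t)(x-t)^{-2}$ as in (\ref{mnogo10})--(\ref{mnogo11}) (or a Cauchy estimate on a contour with uniform bounds); and your final counting/labelling step requires both the identification of the order of vanishing of the determinant with the eigenvalue multiplicity of $M_N$ and uniform invertibility of $\Theta^{-1}-U_N^*R_N(\lambda)U_N$ away from the $\rho_{j'}$'s, which brings back a grid-type uniformity argument over a fixed interval, i.e.\ essentially the content of Lemma \ref{Lemma11}. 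Neither point is a genuine obstruction, but both should be spelled out.
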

\begin{remark}
A simple computation gives
\begin{equation}
-\frac{1}{g_{\sigma}'(\rho_j)}= \theta_j^2 - \sigma^2.
\end{equation}
\end{remark}

It should be mentioned that 
the key part of the proof of Proposition \ref{Prop1} is a lemma 
from \cite{BGM} which is stated as Lemma \ref{Lemma12} in Section \ref{sec:outliers}.
Proposition \ref{Prop1} indicates that 
the question of the limiting distribution of the outliers of the spectrum of the deformed Wigner matrix $M_N$ can be reduced to the 
question about the limiting distribution of the entries of (\ref{thetamatrica}).

Let us denote by $ \langle u, v \rangle = \sum_1^N \bar{u_i}\*v_i $ the standard Euclidean scalar product in $\C^N. $  
The next theorem deals with the values of the sesquilinear form $\langle u^{(N)}, f(X_N)  v^{(N)}\rangle $ 
where $f$ is a sufficiently nice test function on $\R$ and $u^{(N)}, \ v^{(N)} \in \C^N$ are nonrandom unit vectors in $\C^N$, i.e.
\begin{align}
& \|u^{(N)}\| =\|v^{(N)}\|=1, \  N\geq 1, \ \text{where}\nonumber\\
& \|u \|^2=\langle u, u \rangle= \sum_1^N |u_i|^2. \nonumber 
\end{align}
Without additional assumptions on $u^{(N)}$ and $v^{(N)},$ the sequence 
$$\sqrt{N}\*\left(\langle u^{(N)}, f(X_N) v^{(N)} \rangle- \E \langle u^{(N)}, f(X_N) v^{(N)} \rangle \right)$$ 
does not necessarily converge in distribution.
However, one can show that it is tight. 

We say that a function $f:I\subset \R\to  \R$ belongs to $C^n(I)$ if $f$ and its first $n$ derivatives are continuous on $I.$  Define
\begin{equation*}
\|f\|_{C^n(I)}:=\max\left( |\frac{d^lf}{dx^l}(x)|, \ x\in I, \ 0\leq l\leq n \right).
\end{equation*}
We use the notation 
$C^n_c(\R)$ for the space of $n$ times continuously differentiable functions on $\R$ with compact support.
Define
\begin{align}
\label{normsob}
& \|f\|_{n,1}:= \max( \int_{-\infty}^{\infty} |d^kf/dx^k(x)| \* dx, \ 0\leq k \leq n), \\
\label{normasobolev}
& \|f\|_{n,1,+}:=\max \left( \int_{\R} (|x|+1) \*|\frac{d^lf}{dx^l}(x)| \* dx, \ 0\leq l \leq n \right).
\end{align}

We recall that a function $f:\R \to  \R$ is called Lipschitz continuous on an interval $I \subset \R $ if there exists a constant $C$ such that
\begin{equation}
\label{funkciyaLipschitz}
|f(x)-f(y)| \leq C\*|x-y|, \ \ \text{ for all} \ x,y \in I.
\end{equation}
We define
\begin{equation}
\label{snova}
|f|_{\mathcal{L}, \R}=\sup_{x\not=y}\*\frac{|f(x)-f(y)|}{|x-y|},
\end{equation}
and
\begin{equation}
\label{snova1}
|f|_{\mathcal{L}, \delta}=\sup_{x\not=y, \ x,y \in [-2\sigma -\delta, 2\sigma +\delta]}\*\frac{|f(x)-f(y)|}{|x-y|}.
\end{equation}
\begin{theorem} 
\label{thm:quadratic}
Let $X_N=\frac{1}{\sqrt{N}} W_N$ be a random  real symmetric (Hermitian) Wigner matrix defined in 
(\ref{offdiagreal1}-\ref{diagreal2}) (respectively (\ref{offdiagherm1}-\ref{diagherm1})).
Then the following statements hold:

(i) If $f: \R \to \R$ is a $C^5(\R)$ function such that  $\|f\|_{5,1} $ is finite, 
and $u^{(N)}, \ v^{(N)} \in \C^N, \ N\geq 1,$ are two nonrandom sequences of unit vectors 
(in standard Euclidean norm),
then there exists a constant $Const(\sigma^2, m_5, c_3)$ such that
\begin{equation}
\label{ska1}
\V \left(\langle u^{(N)}, f(X_N) \* v^{(N)} \rangle \right) 
\leq Const \* \frac{\|f\|_{5,1}}{N}.
\end{equation}
In particular, the sequence $ \sqrt{N}\* \left(\langle u^{(N)}, f(X_N) \* v^{(N)} \rangle- \E \langle u^{(N)}, f(X_N) \* v^{(N)} \rangle \right)$ 
is bounded in probability.

(ii) If $f \in C^8_c(\R), $ with $supp(f) \subset [-L, +L],$ where $L$ is some positive number then
there exists a constant $Const(L, \sigma^2, m_5, c_3)$ such that
\begin{align}
\label{ska2}
& \big|\E \langle u^{(N)}, f(X_N) \* v^{(N)}\rangle - \langle u^{(N)}, v^{(N)} \rangle \* \int_{-2\sigma}^{2\sigma} f(x) \* d \mu_{sc}(dx)\big| \\
& \leq Const(L, \sigma^2, m_5, c_3)\* \|f\|_{C^8([-L, +L])} \*\frac{1}{\sqrt{N}}. \nonumber
\end{align}
If, in addition, $f \in C^9(\R)$ and $\|f\|_{9,1,+} $ is finite, 
then
\begin{equation}
\label{ska2p}
\big|\E \langle u^{(N)}, f(X_N) \* v^{(N)}\rangle - \langle u^{(N)}, v^{(N)} \rangle \* \int_{-2\sigma}^{2\sigma} f(x) \* d \mu_{sc}(dx)\big|
\leq Const(\sigma^2, m_5, c_3)\* \|f\|_{9,1,+}\*\frac{1}{\sqrt{N}},
\end{equation}
where $Const(\sigma^2, m_5, c_3)$ depends on $\sigma^2, m_5,$ and $c_3.$

(iii) If the marginal distributions of the entries of $W_N$ satisfy the
Poincar\'e inequality (\ref{poin}) with a uniform constant $\upsilon>0$, and 
$f$ is a Lipschitz continuous function on $[-2\*\sigma -\delta, 2\*\sigma +\delta] $ that
satisfies a subexponential growth condition
\begin{equation}
\label{exprost}
|f(x)|\leq a \* \exp(b\*|x|) \ \ \text{ for all \ } x \in \R,
\end{equation}
for some positive constants $a$ and $b,$
then
\begin{align}
\label{ska3}
&  \P \left( |\langle u^{(N)}, f(X_N) \* v^{(N)}\rangle  - \E \langle u^{(N)}, f(X_N) \* v^{(N)} \rangle| \geq t \right) \\
& \leq  2\*K \* \exp\left( -\frac{\sqrt{\upsilon \*N} \* t}{2 \*|f|_{\mathcal{L},\delta}} \right) + (2\*K+ o(1))\* 
\exp\left(-\frac{\sqrt{\upsilon\*N}}{2}\*\delta\right), \nonumber
\end{align}
where $|f|_{\mathcal{L},\delta} $ is defined in (\ref{snova1}),
\begin{equation}
\label{KK}
K=-\sum_{i\geq 0} 2^i\*\log(1-2^{-1}\*4^{-i}),
\end{equation}
and $\upsilon $ is the constant in the Poincar\'e inequality (\ref{poin}).

(iv) If the marginal distributions of the entries of $W_N$ satisfy the
Poincar\'e inequality (\ref{poin}) with a uniform constant $\upsilon>0$,
and $f$ is a Lipschitz continuous function on $\R,$
then 
\begin{align}
\label{ska4}
&  \P \left( |\langle u^{(N)}, f(X_N) \* v^{(N)}\rangle  - \E \langle u^{(N)}, f(X_N) \* v^{(N)} \rangle| \geq t \right) \\
& \leq  2\*K \* \exp\left( -\frac{\sqrt{\upsilon \*N} \* t}{2\*|f|_{\mathcal{L}, \R}} \right), \nonumber
\end{align}
where $|f|_{\mathcal{L},\R} $ is defined in (\ref{snova}).

(v)  If the marginal distributions of the entries of $W_N$ satisfy the
Poincar\'e inequality (\ref{poin}) with a uniform constant $\upsilon>0$,
$ f\in C^8(\R),$ and  $f$ satisfies the subexponential growth condition
(\ref{exprost}), then 
\begin{equation}
\label{ska5}
\E \langle u^{(N)}, f(X_N) \* v^{(N)}\rangle = \langle u^{(N)}, v^{(N)} \rangle \* \int_{-2\sigma}^{2\sigma} f(x) \* d \mu_{sc}(dx) 
+ O\left(\frac{1}{\sqrt{N}}\right).
\end{equation}
\end{theorem}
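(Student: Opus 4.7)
All five estimates should be deduced from the corresponding statements for the resolvent quadratic form $\langle u^{(N)}, R_N(z) v^{(N)}\rangle$ via the Helffer--Sj\"ostrand functional calculus. Given $f\in C^{n+1}(\R)$ of adequate decay, construct an almost analytic extension $\tilde f(z)$ with $\tilde f|_\R = f$ and
\[
|\bar\partial \tilde f(z)| \le C_n\,|\Im z|^n \sum_{k=0}^{n+1} |f^{(k)}(\Re z)|\,\chi(\Im z),
\]
for some cutoff $\chi$ supported near $0$, so that $f(X_N) = \pi^{-1}\!\int_{\C} \bar\partial\tilde f(z)\, R_N(z)\,dx\,dy$ and hence
\[
\langle u^{(N)}, f(X_N) v^{(N)}\rangle = \frac{1}{\pi}\!\int_{\C} \bar\partial\tilde f(z)\,\langle u^{(N)}, R_N(z) v^{(N)}\rangle\,dx\,dy.
\]
The required inputs from the companion papers \cite{PRS, ORS} are variance bounds of the form $\V(\langle u, R_N(z) v\rangle) \le C/(N\,|\Im z|^p)$ and an expectation expansion $\E\langle u, R_N(z) v\rangle = g_\sigma(z)\langle u, v\rangle + O(N^{-1/2}\,|\Im z|^{-q})$, the latter improved to a uniform $O(N^{-1/2})$ away from the spectrum under the Poincar\'e hypothesis.

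\textbf{Parts (i), (ii), (v).} For (i), use
\[
\V\!\left(\langle u^{(N)}, f(X_N) v^{(N)}\rangle\right) = \frac{1}{\pi^2}\!\iint \bar\partial\tilde f(z_1)\,\overline{\bar\partial \tilde f(z_2)}\,\Cov\!\big(\langle u, R_N(z_1) v\rangle,\ \overline{\langle u, R_N(z_2) v\rangle}\big)\,dA_1\,dA_2,
\]
and verify that a fifth--order almost analytic extension provides enough vanishing of $|\bar\partial\tilde f|$ in $|\Im z|$ to absorb the polynomial singularity of the covariance, giving (\ref{ska1}) with the norm $\|f\|_{5,1}$. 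Parts (ii) and (v) use the same device for the single integral $\E\langle u, f(X_N) v\rangle - \langle u, v\rangle\!\int f\,d\mu_{sc} = \pi^{-1}\!\int \bar\partial\tilde f(z)\,(\E\langle u, R_N(z) v\rangle - g_\sigma(z)\langle u,v\rangle)\,dx\,dy$. Compact support of $f$ makes a $C^8$ extension sufficient for (\ref{ska2}); for (\ref{ska2p}) the weight $|x|+1$ in $\|f\|_{9,1,+}$ accommodates the $|x|$--growth of the almost analytic extension on the full line; for (v) the Poincar\'e--improved resolvent expectation supplies the uniform $O(N^{-1/2})$ bound (\ref{ska5}).

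\textbf{Parts (iii) and (iv).} For (iv), view $F(W) := \langle u^{(N)}, f(W/\sqrt N)\,v^{(N)}\rangle$ as a function of the independent entries of $W_N$. The bound $|\langle u, M v\rangle| \le \|u\|\,\|v\|\,\|M\|_{HS}$ together with the standard Hilbert--Schmidt Lipschitz property of Hermitian functional calculus, $\|f(A) - f(B)\|_{HS} \le |f|_{\mathcal L, \R}\,\|A-B\|_{HS}$, yields $|F(W) - F(W')| \le |f|_{\mathcal L, \R}\,\|W - W'\|_{HS}/\sqrt N$. The tensorized Poincar\'e inequality (\ref{poin}) for $W_N$ with constant $\upsilon$ plus Gromov--Milman exponential concentration then gives (\ref{ska4}); the explicit constant $K$ in (\ref{KK}) emerges from the usual dyadic iteration that upgrades a Poincar\'e variance bound to an exponential tail estimate. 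For (iii), truncate $f$ to a globally Lipschitz $\tilde f$ that agrees with $f$ on $[-2\sigma-\delta,2\sigma+\delta]$ and satisfies $|\tilde f|_{\mathcal L, \R} = |f|_{\mathcal L, \delta}$; apply (\ref{ska4}) to $\tilde f$, and bound the contribution of the event $\{\|X_N\|_{op} > 2\sigma+\delta\}$ by applying the same concentration principle to the $N^{-1/2}$--Lipschitz function $W\mapsto \|W/\sqrt N\|_{op}$ together with the convergence $\E\|X_N\|_{op}\to 2\sigma$; this produces the second term in (\ref{ska3}).

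\textbf{Main obstacle.} The principal technical difficulty is matching the order of vanishing of $\bar\partial\tilde f(z)$ as $\Im z \to 0$ with the polynomial blow-up of the resolvent variance and expectation estimates taken from \cite{PRS, ORS}, while invoking only the precise Sobolev-type norms $\|f\|_{5,1}$, $\|f\|_{C^8([-L,L])}$, and $\|f\|_{9,1,+}$ dictated by the statement. In particular, (\ref{ska2p}) must also control the integrand at large $|\Re z|$, which is what forces the weighted norm $\|f\|_{9,1,+}$ in place of $\|f\|_{9,1}$, and (iii), (v) require reconciling the subexponential growth hypothesis with the truncation step so that the excluded event contributes negligibly.
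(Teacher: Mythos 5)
Your overall architecture coincides with the paper's: Helffer--Sj\"ostrand functional calculus reduces (i), (ii), (v) to expectation and variance bounds for the resolvent sesquilinear form, and (iii), (iv) follow from the Hilbert--Schmidt Lipschitz property of the functional calculus, tensorization of the Poincar\'e inequality, exponential concentration with the constant $K$, and a truncation of $f$ outside $[-2\sigma-\delta,2\sigma+\delta]$ controlled by the tail bound on $\|X_N\|$. However, there is a genuine gap at the foundation of your plan: the inputs you claim to import from \cite{PRS}, \cite{ORS} --- namely $\V\left(\langle u, R_N(z) v\rangle\right)=O\left(P(|\Im z|^{-1})/N\right)$ and $\E\langle u, R_N(z) v\rangle = g_\sigma(z)\langle u,v\rangle + O\left(N^{-1/2}P(|\Im z|^{-1})\right)$ for \emph{arbitrary} nonrandom unit vectors $u,v$ --- are not available in those references. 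The companion papers only treat resolvent \emph{entries}, i.e.\ the case $u=e_i$, $v=e_j$, and establishing the general-vector estimates is precisely the main technical content on which the theorem rests (Proposition \ref{proposition:prop1} here). Its proof is not a routine transfer: it runs the decoupling formula through a master equation for $\eta_N=\langle u, R_N(z)v\rangle$ and then for $\E|\eta_N|^2$, bounds third/fourth cumulant and truncation terms using vector-specific devices such as $\sum_i|u_i|\le\sqrt N\,\|u\|$ and $\sum_k|R_{ki}|^2\le\|R_N(z)\|^2$, isolates one third-cumulant term that is only $O(N^{-1/2})$ and must be cancelled against the squared expectation, and closes a self-consistent inequality for $\V(\eta_N)$ (Lemma \ref{LemmaVar}), treated separately on $|\Im z|\gtrsim N^{-1/4}$ and its complement. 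Note also that the error for general unit vectors is genuinely $O(N^{-1/2})$ rather than the $O(N^{-1})$ valid for single entries, which is exactly why (\ref{ska2}) and (\ref{ska2p}) carry a $N^{-1/2}$ rate; a proposal that black-boxes the entrywise results would not even produce the right exponent bookkeeping. Without supplying this proposition, your proof of Theorem \ref{thm:quadratic} is incomplete.

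Two smaller points. For (v), the mechanism is not an ``improved resolvent expectation under Poincar\'e'': the issue is the unboundedness of $f$, and the paper's route (which you partly anticipate in your final paragraph) is to split $f=fh+f(1-h)$ with a smooth cutoff $h$ equal to $1$ on $[-2\sigma-\delta/2,2\sigma+\delta/2]$, apply the compact-support estimate (\ref{ska2}) to $fh$, and kill the remainder using the subexponential growth of $f$ together with the exponential tail $\P(\|X_N\|>2\sigma+\delta)\le (2K+o(1))\exp\left(-\tfrac{\sqrt{\upsilon N}}{2}\delta\right)$. For (iii), the centering of the concentration bound for $\|X_N\|$ around $2\sigma$ needs a justification (the paper invokes universality results for the largest eigenvalue); ``$\E\|X_N\|\to 2\sigma$'' should be stated and sourced rather than assumed.
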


We finish this section by formulating our last theorem, Theorem \ref{thm:Local}, which allows us to extend Theorem \ref{thm:caseA} (see Remark \ref{rem}
in Section \ref{sec:loc}).
Assume that that the off-diagonal entries
$(W_N)_{ij}, \ 1\leq i<j\leq N,$ are i.i.d. random variables with probability distribution $\mu$ and the diagonal entries 
$(W_N)_{ii}, \ 1\leq i< N,$ are i.i.d. random variables with probability distribution $\mu_1.$

Let us consider $ u^{(N)}, \ v^{(N)} \in \C^N \ $ that are independent of $N$ for all $N \geq N_0,\  $
in a sense that only a fixed finite number of the coordinates of $u^{(N)}, \ v^{(N)} \ $ are non-zero and the coordinates do not change with $N$ for
$N\geq N_0.$
In this case, we can write $u^{(N)}=u, \ v^{(N)}=v, \ $ with the understanding that as the dimension $N$ grows, one just adds more zero coordinates to 
$u$ and $ v. \ $  As an immediate consequence of the results of Theorem 1.1 (real symmetric case) and Theorem 1.5 (Hermitian case) 
in \cite{PRS}, the random sequence 
\begin{equation}
\label{skalyar}
\sqrt{N}\*\left(\langle u, R_N(z) v \rangle- g_\sigma(z)\* \langle u, v \rangle \right)
\end{equation}
converges in distribution as $N \to \infty.$
Without loss of generality, we will consider the real symmetric case; the Hermitian case is essentially identical.
Let $m $ be an arbitrary fixed positive integer. Denote by $R^{(m)}(z) $ the $m\times m$ upper-left corner of the matrix $R_N(z).$
Theorem 1.1 in \cite{PRS} states that a matrix-valued random field 
\begin{equation}
\label{upsNN}
\Upsilon_N(z)=\sqrt{N} \* \left(R^{(m)}(z)-g_{\sigma}(z)\*I_m \right), \ z \in \C \setminus [-2\*\sigma, 2\*\sigma],
\end{equation}
with values in the space of complex symmetric $m\times m$ matrices,
converges in finite-dimensional distributions to a random field 
\begin{equation}
\label{upsaa}
\Upsilon (z) = g_{\sigma}^2(z)\*(W^{(m)} + Y(z)),
\end{equation}
where $W^{(m)}$ is the $m\times m$ upper-left corner submatrix of a Wigner matrix $W_N, \ g_{\sigma}(z) $ is the Stieltjes transform (\ref{steltsem}) 
of the Wigner semicircle law,  and
$$Y(z)=\left(Y_{ij}(z)\right), Y_{ij}(z)=Y_{ji}(z), \ 1\leq i,j \leq m, $$
is a Gaussian random field with the covariance matrix given by 
the formulas (1.18)-(1.23) in the real-symmetric case and (1.50)-(1.55) in the Hermitian case in \cite{PRS}.  It is important to 
note that $Y_{ij}(z), \ 1\leq i\leq j \leq m, $ 
are independent random processes for different indices $(ij).$

Let us extend the definition of $\Upsilon(z) $ 
to that of an infinite-dimensional  matrix $ \Upsilon(z)_{pq}, \ 1\leq p,q <\infty, \ $ using
the formulas (1.18)-(1.23) (respectively (1.50)-(1.55)) from \cite{PRS}. Thus, the r.h.s. in (\ref{upsaa}) defines now the 
$m\times m$ upper-left corner 
of the infinite matrix $\Upsilon(z).$ 
Then Theorem 1.1 of \cite{PRS} implies that
\begin{equation}
\label{shodnya200}
\sqrt{N}\*\left(\langle u, R_N(z) v \rangle- g_\sigma(z)\* \langle u, v \rangle \right)\to
\langle u, \Upsilon(z)\*v \rangle,
\end{equation}
in distribution.

Let $u, v \in l^2(\N).\ $ It follows from the Kolmogorov three-series theorem (see e.g. 
\cite{Dur}) that $ \ \langle u, \Upsilon(\theta_j)\* v \rangle $ is well defined as an infinite sum 
of centered random variables with summable variances.
For our analysis of the outliers in the spectrum of finite-rank deformations of Wigner matrices, it will be useful to have the following result.  
\begin{theorem}
\label{thm:Local}
Let $X_N=\frac{1}{\sqrt{N}} W_N$ be a random real symmetric (Hermitian) Wigner matrix 
defined in (\ref{offdiagreal1}-\ref{diagreal2}) (respectively (\ref{offdiagherm1}-\ref{diagherm1}))
such that that the off-diagonal entries
$(W_N)_{ij}, \ 1\leq i<j\leq N,$ are i.i.d. random variables with probability distribution $\mu$ and the diagonal entries 
$(W_N)_{ii}, \ 1\leq i< N,$ are i.i.d. random variables with probability distribution $\mu_1.$

Let $l$ be a fixed positive integer, $u_1, \ldots, u_l, \ $ be a collection of non-random vectors in $ l^2(\N), \ $ and let
$u^{(N)}_p, \ 1\leq p \leq l, \ $ denote the projection of $u_p$ 
to the subspace spanned by the first $N$ standard basis vectors $e_1, \ldots, e_N. $
Then the joint distribution of 
$$ \sqrt{N}\*\left(\langle u^{(N)}_p, R_N(z) u^{(N)}_q \rangle- g_\sigma(z)\* \langle u^{(N)}_p, u^{(N)}_q \rangle \right), \ \ 1\leq p,q\leq l, $$
converges weakly to the joint distribution of
$ \ \langle u_p, \Upsilon(z) u_q \rangle, \ \ 1\leq p,q\leq l.$
\end{theorem}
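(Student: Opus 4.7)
The plan is to reduce to the finite-dimensional case of Theorem 1.1 (or Theorem 1.5 in the Hermitian case) of \cite{PRS} via a truncation argument, and to control the tail error using the variance and bias bounds of Theorem \ref{thm:quadratic}. For each positive integer $m$, let $u_p^{[m]}$ denote the projection of $u_p$ onto the span of $e_1, \ldots, e_m$, viewed as a vector in $\C^N$ for $N \geq m$ by padding with zeros. Since $u_p \in l^2(\N)$, $\|u_p - u_p^{[m]}\| \to 0$ as $m \to \infty$, and by construction $\|u_p^{(N)} - u_p^{[m]}\| \leq \|u_p - u_p^{[m]}\|$ for all $N \geq m$. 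Moreover, $\langle u_p^{[m]}, R_N(z) u_q^{[m]}\rangle$ depends on $R_N(z)$ only through its upper-left $m \times m$ block $R^{(m)}(z)$.

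For each fixed $m$, Theorem 1.1 (respectively 1.5) of \cite{PRS} asserts that $\Upsilon_N(z) = \sqrt{N}(R^{(m)}(z) - g_\sigma(z) I_m)$ converges in finite-dimensional distributions to the matrix-valued field $\Upsilon(z)$ restricted to its $m \times m$ upper-left corner. By the continuous mapping theorem applied to the bilinear functionals $(A, u, v) \mapsto \langle u, A v\rangle$, this gives the joint weak convergence
\[\sqrt{N}\bigl(\langle u_p^{[m]}, R_N(z) u_q^{[m]}\rangle - g_\sigma(z)\langle u_p^{[m]}, u_q^{[m]}\rangle\bigr) \;\xrightarrow[N\to\infty]{d}\; \langle u_p^{[m]}, \Upsilon(z) u_q^{[m]}\rangle,\]
jointly over $1 \leq p, q \leq l$.

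The heart of the argument is to estimate the discrepancy
\[E_N^{(m,p,q)} := \sqrt{N}\langle u_p^{(N)} - u_p^{[m]}, (R_N(z) - g_\sigma(z) I) u_q^{(N)}\rangle + \sqrt{N}\langle u_p^{[m]}, (R_N(z) - g_\sigma(z) I)(u_q^{(N)} - u_q^{[m]})\rangle\]
uniformly in $N$ as $m \to \infty$. Fix $\delta > 0$ with $z \notin [-2\sigma - 2\delta, 2\sigma + 2\delta]$ and a smooth cutoff $\chi \in C^\infty_c(\R)$ with $\chi \equiv 1$ on $[-2\sigma - \delta, 2\sigma + \delta]$; set $f(x) = \chi(x)/(z - x)$, so that $f \in C^\infty_c(\R)$ and $\int f\,d\mu_{sc} = g_\sigma(z)$. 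Since $\|X_N\| \to 2\sigma$ in probability, the event $\{\|X_N\| \leq 2\sigma + \delta\}$ (on which $R_N(z) = f(X_N)$) has probability tending to one. On this event, Theorem \ref{thm:quadratic}(i) applied to the rescaled unit vectors $a/\|a\|$, $b/\|b\|$ gives the variance bound
\[\V\bigl(\sqrt{N}\langle a, f(X_N) b\rangle\bigr) \leq C(z, \delta)\,\|a\|^2\|b\|^2,\]
and Theorem \ref{thm:quadratic}(ii) gives the mean estimate $|\E \langle a, f(X_N) b\rangle - g_\sigma(z)\langle a, b\rangle| \leq C(z,\delta)\,\|a\|\|b\|/\sqrt{N}$. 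Applied to each of the two summands defining $E_N^{(m,p,q)}$, these produce a uniform-in-$N$ bound
\[\bigl(\E |E_N^{(m,p,q)}|^2\bigr)^{1/2} \leq C(z,\delta)\bigl(\|u_p - u_p^{[m]}\|\,\|u_q\| + \|u_p\|\,\|u_q - u_q^{[m]}\|\bigr) + o(1),\]
which vanishes as $m \to \infty$ uniformly in $N$.

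Finally, the Kolmogorov three-series theorem (as already invoked in the excerpt immediately preceding the statement) guarantees that $\langle u_p^{[m]}, \Upsilon(z) u_q^{[m]}\rangle \to \langle u_p, \Upsilon(z) u_q\rangle$ almost surely, hence in distribution, as $m \to \infty$, jointly over $(p,q)$. A standard approximation argument for weak convergence --- finite-$m$ convergence, plus limiting-$m$ convergence of the limit, plus uniform-in-$N$ smallness of $E_N^{(m,p,q)}$ --- then yields the claimed joint weak convergence. The main technical obstacle is the uniform-in-$N$ $L^2$ control of $E_N^{(m,p,q)}$; what makes it work is that the constants in Theorem \ref{thm:quadratic}(i)--(ii) depend only on $\sigma^2, m_5, c_3$ and on the test function $f$, not on $N$ or on the specific unit vectors, so that the dependence on the truncation error enters only through the $l^2$ norms $\|u_p - u_p^{[m]}\|$.
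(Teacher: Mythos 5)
Your proposal is correct and follows essentially the same route as the paper: truncate the vectors to finitely many coordinates, invoke Theorem 1.1 (resp. 1.5) of \cite{PRS} for the truncated bilinear forms, control the truncation error uniformly in $N$ via the variance and expectation bounds of Theorem \ref{thm:quadratic} made homogeneous in the vector norms (with the cutoff $\chi(x)/(z-x)$ playing the role of the paper's $h(X_N)R_N(z)$), handle the limit side by summable variances of the independent entries of $\Upsilon(z)$, and conclude by the standard triangular weak-convergence argument. The only cosmetic difference is that you claim almost sure convergence of $\langle u_p^{[m]},\Upsilon(z)u_q^{[m]}\rangle$ in $m$, whereas the $L^2$ (hence distributional) convergence used in the paper already suffices.
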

The rest of the paper is organized as follows.  Section \ref{sec:mathvar} is devoted to the estimates on the mathematical expectation and the variance
of the values of the resolvent sesquilinear form $\langle u^{(N)}, R_N(z) v^{(N)} \rangle, $ where $u^{(N)}, \ v^{(N)} $ are arbitrary non-random unit 
vectors in $\C^N.$
Using the estimates obtained in Section \ref{sec:mathvar}, 
we prove Theorems \ref{thm:quadratic} in Section \ref{sec:proofquadratic}.
Theorem \ref{thm:fluctoutliers} is proved in Section \ref{sec:outliers}. Finally, Theorems  \ref{thm:caseA}, \ref{thm:caseA1}, and \ref{thm:Local} are 
proved in Section \ref{sec:loc}. In the Appendix, we discuss tools used throughout the paper. 

We would like to thank A. Guionnet for bringing our attention to the preprints \cite{BGM} and \cite{BGM1}.

\section{ \bf{Mathematical Expectation and Variance of Resolvent Sesquilinear Form}}
\label{sec:mathvar}
This section is devoted to the proof of the main building block Theorem \ref{thm:quadratic}, namely Proposition \ref{proposition:prop1}.

Without loss of generality, we can restrict our attention to the real symmetric case. 
Let $u^{(N)}=(u_1, \ldots, u_N), \ 
v^{(N)}=(v_1, \ldots, v_N) $  be nonrandom unit vectors in $\C^N.$ When it does not lead to ambiguity, we will omit the superscript in 
$u^{(N)}$ and $v^{(N)}.$
Define
\begin{equation}
\label{etann}
\eta_N:= \langle u^{(N)}, R_N(z) v^{(N)} \rangle= \sum_{ij} \bar{u_i} \* R_{ij}\*v_j. 
\end{equation}
When it does not lead to ambiguity we will use the shorthand notation, $R_{ij}$,  for the $ij$-th entry $(R_N(z))_{ij},$ of the resolvent matrix 
$R_N(z).$
\begin{proposition}
\label{proposition:prop1}
Let $X_N=\frac{1}{\sqrt{N}} W_N$ be a random real symmetric (Hermitian) Wigner matrix defined in (\ref{offdiagreal1}-\ref{diagreal2})
((\ref{offdiagherm1}-\ref{diagherm1})), $R_N(z)=(z\*I_N-X_N)^{-1}, \ z \in \C\setminus \R, $ and  $u^{(N)}=(u_1, \ldots, u_N), \ 
v^{(N)}=(v_1, \ldots, v_N) $  be nonrandom unit vectors in $\C^N.$
Then
\begin{equation}
\label{peredacha}
\E \eta_N=\E \langle u^{(N)}, R_N(z) v^{(N)} \rangle = g_{\sigma}(z) \* \langle u^{(N)}, v^{(N)} \rangle 
+O\left(\frac{1}{|\Im z|^7\*\sqrt{N}}\right), 
\end{equation}
uniformly on bounded subsets of $\C\setminus \R, $
\begin{align}
\label{peredachauh}
& \E \langle u^{(N)}, R_N(z) v^{(N)} \rangle =g_{\sigma}(z) \* \langle u^{(N)}, v^{(N)} \rangle +
O\left((|z|+M)\*\frac{P_8(|\Im z|^{-1})}{\sqrt{N}}\right),\\
\label{peredachavar}
& \V \eta_N=\V \langle u^{(N)}, R_N(z) v^{(N)} \rangle = 
O\left(\frac{P_8(|\Im z|^{-1})}{N}\right),
\end{align}
uniformly on $\C\setminus \R,$ where 
$P_l(x), \ l\geq 1,$ denotes a polynomial of degree $l$ with fixed positive coefficients, and
$M$ is some constant.
\end{proposition}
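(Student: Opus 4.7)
The plan is to split the proof into two essentially independent pieces: the variance bound (\ref{peredachavar}) and the expectation estimates (\ref{peredacha}),(\ref{peredachauh}). Both rely on three standard tools for resolvents of Wigner matrices: the Schur complement / leave-one-out formula, the Ward identity
\[
\sum_{j=1}^{N} |R_{ij}(z)|^{2} = \frac{\Im R_{ii}(z)}{\Im z},
\]
and the deterministic a priori operator bound $\|R_N(z)\| \leq |\Im z|^{-1}$. The Ward identity is essential: repeated use of the crude bound $\|R\|\le|\Im z|^{-1}$ would blow the exponent of $|\Im z|^{-1}$ past 8, so we must always sum one pair of indices via Ward before bounding.

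For the variance bound I would use the standard martingale difference decomposition. Let $\E_k$ denote conditional expectation given the first $k$ rows (and columns) of $W_N$ and write $\eta_N-\E\eta_N = \sum_{k=1}^{N} D_k$ with $D_k := (\E_k-\E_{k-1})\eta_N$. Since the $D_k$ are orthogonal, $\V\eta_N = \sum_k \E|D_k|^2$. Denoting by $R_N^{(k)}$ the resolvent of the matrix obtained by zeroing the $k$-th row and column of $X_N$, the Schur/Sherman--Morrison formula expresses $R_N - R_N^{(k)}$ as a rank-one-type expression polynomial in the $k$-th row of $W_N/\sqrt{N}$ and the entries of $R_N^{(k)}$. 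Since $R_N^{(k)}$ is $\E_{k-1}$-measurable (up to being a resolvent that does not depend on row $k$), applying $\E_k-\E_{k-1}$ kills the deterministic pieces and leaves terms of the schematic form $N^{-1/2}(W_N)_{kj}\times(\text{entries of }R_N^{(k)})$. Taking the square and expectation, using $m_5,c_3<\infty$, and contracting one index sum via the Ward identity, gives $\E|D_k|^2 \leq N^{-1} P_8(|\Im z|^{-1})$ uniformly in $k$; summing over $k$ yields (\ref{peredachavar}).

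For the expectation I would start from the resolvent identity $zR_N = I + X_N R_N$, take the $(i,k)$-entry, multiply by $\bar u_i v_k$ and sum to obtain
\[
z\,\E\eta_N = \langle u,v\rangle + \frac{1}{\sqrt{N}}\sum_{i,j,k}\bar u_i\,\E\bigl[(W_N)_{ij} R_{jk}(z)\bigr] v_k.
\]
To handle the cross term I apply a cumulant expansion (Stein-type formula)
\[
\E\!\left[(W_N)_{ij} F\bigr] = \sum_{p=1}^{P} \frac{\kappa_{p+1}((W_N)_{ij})}{p!}\,\E[\partial^{p}_{(W_N)_{ij}} F] + \text{remainder},
\]
truncating at $P=4$ off-diagonal and $P=2$ on the diagonal, matching $m_5$ and $c_3$; the remainders are controlled by Cauchy--Schwarz against $\|R\|^{p+1}$, with one further Ward contraction to reach the correct power of $|\Im z|^{-1}$. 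The derivatives $\partial_{(W_N)_{ij}} R_{jk} = -N^{-1/2}(R_{ji}R_{jk}+R_{jj}R_{ik})$ produce a leading term proportional to $\sigma^2\bigl(\tfrac{1}{N}\sum_j R_{jj}(z)\bigr)\E\eta_N$, and the concentration of $\tfrac{1}{N}\sum_j R_{jj}(z)$ about $g_\sigma(z)$ (a standalone application of the semicircle law, or of (\ref{peredachavar}) with $u=v=e_j$ and averaging) reduces the identity to the self-consistent equation
\[
z\,\E\eta_N = \langle u,v\rangle + \sigma^{2} g_\sigma(z)\,\E\eta_N + \varepsilon_N(z),
\]
with $|\varepsilon_N(z)|\leq P_8(|\Im z|^{-1})/\sqrt{N}$. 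Using $\sigma^2 g_\sigma^2-zg_\sigma+1=0$, i.e.\ $(z-\sigma^2 g_\sigma)^{-1}=g_\sigma$, solving for $\E\eta_N$ gives $\E\eta_N = g_\sigma(z)\langle u,v\rangle + O(|\Im z|^{-7}N^{-1/2})$ on bounded subsets (where $|z-\sigma^2 g_\sigma|^{-1}$ is controlled), and the weaker global bound (\ref{peredachauh}) when $|z|$ is allowed to grow.

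The main obstacle is the bookkeeping of $|\Im z|^{-1}$ powers in the cumulant expansion. Each Schur reduction or each derivative $\partial_{(W_N)_{ij}}$ produces one extra resolvent factor, and using the operator-norm bound $|R_{ab}|\leq|\Im z|^{-1}$ indiscriminately would quickly exceed degree 8; the entire combinatorial task is to identify, at each step, at least one summation index whose pair can be contracted by Ward so as to convert a factor $|\Im z|^{-2}$ into $|\Im z|^{-1}$. A secondary difficulty is that the cumulant expansion must be truncated exactly at the order compatible with $m_5$ and $c_3$, and the truncation remainders must likewise be handled by a Ward contraction rather than a naive operator-norm estimate. Once these counting issues are resolved, the algebra reducing the self-consistent equation to the claimed error is routine.
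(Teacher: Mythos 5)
Your expectation argument is essentially the paper's: resolvent identity, cumulant (decoupling) expansion, reduction to the self-consistent equation $z\,\E\eta_N=\langle u,v\rangle+\sigma^2 g_\sigma(z)\E\eta_N+\varepsilon_N(z)$, and inversion via $z-\sigma^2 g_\sigma(z)=1/g_\sigma(z)$. Two calibration points there. First, with only $m_5$ and $c_3$ finite, a KKP-type expansion whose remainder is controlled by $\sup|\phi^{(p+1)}|\,\E|\xi|^{p+2}$ must be truncated at $p=3$ off the diagonal and $p=1$ on the diagonal; your choice ($P=4$ and $P=2$) would require sixth and fourth moments, so it does not "match $m_5$ and $c_3$" as claimed. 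Second, passing from $\E(\eta_N\tr_N R_N)$ to $g_\sigma(z)\E\eta_N$ needs a quantitative rate for $g_N(z)-g_\sigma(z)$ (the semicircle law by itself gives none) and a covariance bound; the paper imports $\E R_{ii}=g_\sigma(z)+O(N^{-1}P_6)$ from \cite{PRS} and $\V(\tr_N R_N)=O(N^{-2}P_4)$ from \cite{Sh}, and you must supply substitutes (derivable by the same expansion applied to $g_N$, but missing as written). Incidentally, the Ward identity you call essential is not needed: the paper reaches degree $8$ with only $\|R_N(z)\|\le|\Im z|^{-1}$, column bounds $\sum_k|R_{ki}|^2\le|\Im z|^{-2}$, Cauchy--Schwarz, and $\sum_i|u_i|\le\sqrt N$.

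The genuine gap is in the variance part. Your stated estimate, $\E|D_k|^2\le N^{-1}P_8(|\Im z|^{-1})$ uniformly in $k$, does not yield (\ref{peredachavar}): summing $N$ such terms gives $O(P_8)$, i.e.\ only boundedness of $\V(\langle u,R_Nv\rangle)$, not the required $O(P_8/N)$. The stronger uniform bound $O(N^{-2}P_8)$ is false: taking $u=v=e_k$, the single increment $D_k$ already carries variance of order $1/N$ (this special case is precisely $\V(R_{kk})\asymp N^{-1}$, cf.\ (\ref{peredacha3})). The martingale/leave-one-out route can be repaired, but only by proving a weighted bound of the type $\E|D_k|^2\lesssim N^{-2}P+N^{-1}P\,(|u_k|^2+|v_k|^2)$, separating the bulk correction $R_{kk}(R^{(k)}a_k)_i(R^{(k)}a_k)_j$ summed over $i,j\ne k$ (which is $O(N^{-2})$) from the $k$-th row/column terms carrying the factors $u_k,v_k$, the latter requiring in addition a quadratic-form concentration estimate for the fluctuation of $R_{kk}$ under resampling of row $k$; one then sums using $\|u\|=\|v\|=1$. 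None of this appears in your sketch, and one must also address small $|\Im z|$ to get uniformity on $\C\setminus\R$. The paper proceeds quite differently: it derives a Master equation for $\E|\eta_N|^2$ by the same decoupling formula, subtracts $\overline{\E\eta_N}$ times the Master equation for $\E\eta_N$, obtains $(z-\sigma^2 g_N(z))\V(\eta_N)=\sqrt{\V(\eta_N)}\,O(P_3(|\Im z|^{-1})N^{-1/2})+O(P_6(|\Im z|^{-1})N^{-1})$, solves this quadratic relation on $\{|\Im z|>LN^{-1/4}\}$ using $g_N(z)(z-\sigma^2 g_N(z))=1+O(P_4/N)$, and finishes with the trivial bound $\V(\eta_N)\le|\Im z|^{-2}\le L^4N^{-1}|\Im z|^{-6}$ on the complementary region — an argument that needs no leave-one-out construction, no Ward identity, and no per-row bookkeeping in $u_k,v_k$.
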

\begin{remark}
In the case when $u^{(N)} $ and $ v^{(N)}$ are standard basis vectors, 
$u=e_i, \ v=e_j, $ the mathematical expectation and the variance of $\langle u^{(N)}, R_N(z) v^{(N)} \rangle$ have been studied in \cite{PRS}.
In particular, it has been shown there in Proposition 2.1 and (3.27) that
\begin{equation}
\label{peredacha1}
\E R_{ii}= g_{\sigma}(z) + O\left(\frac{1}{|\Im z|^6\*N}\right),
\end{equation}
uniformly on bounded subsets of $\C \setminus \R,$ and
\begin{align}
\label{peredacha1p}
& \E R_{ii}=g_{\sigma}(z)+ O\left((|z|+M)\*\frac{P_7(|\Im z|^{-1})}{N}\right), \\
\label{peredacha2}
& \E R_{ij}= O\left(\frac{P_5(|\Im z|^{-1})}{N}\right), \ \E R_{ij}=O \left( \frac{P_9(|\Im z|^{-1})}{N^{3/2}}\right), \ i\not=j, \\
\label{peredacha3}
& \V R_{ij} = O \left( \frac{P_6(|\Im z|^{-1})}{N}\right), \  1\leq i, j\leq N, 
\end{align}
uniformly on $\C \setminus \R. $
\end{remark}
\begin{remark}
In \cite{EYY}, Erd\"os, Yau, and Yin studied generalized Wigner matrices (defined at the beginning of Section 2 of  \cite{EYY}), and obtained 
the following estimates provided the marginal distributions have subexponential tails
\begin{align}
\label{eyy1}
& \P  \left \{ max_i |R_{ii}(z) - g_{\sigma}(z)| \geq \frac{(\log N)^l}{(N \* |\Im z |)^{1/3}} \right \} 
\leq C \exp \left[ -c (\log N)^{\phi \*l}\right], \\
\label{eyy2}
& \P  \left \{ max_{i\not=j} |R_{ij}(z)| \geq \frac{(\log N)^l}{(N \* |\Im z |)^{1/2}} \right \} \leq C \exp \left[ -c (\log N)^{\phi \*l}\right],
\end{align}
where $0<\phi<1, \ C\geq 1, c>0$ are some constants, $4/\phi\leq l \leq C \log N/ \log \log N,$ \\
$ N^{-1}\* (\log N)^{10\*l} <\Im z \leq 10, \ 
|\Re z| \leq 5\sigma, $ and $N$ is sufficiently large.
\end{remark}
\begin{remark}
It follows from our proofs that the error term on the r.h.s. of (\ref{peredacha}) can be replaced by 
$ O\left(\frac{ \min ( \|u\|_{1}, \|v\|_{1})}{|\Im z|^7\*N}\right), \ $
where $\|u\|_{1}=\sum_{i=1}^N |u_i|. $
\end{remark}
The rest of the section is devoted to the proof of Proposition \ref{proposition:prop1}.
\begin{proof}
Without loss of generality, we can restrict our attention to the real symmetric case. The proof in the Hermitian case is very similar.
We start by proving (\ref{peredacha}).
Using $(z\*I_N -X_N)\*R_N(z)=I_N,$  we write
\begin{equation}
\label{odin}
z\*\E \sum_{ij} \bar{u_i} \*R_{ij} \*v_j = \E \sum_{ijk} \bar{u_i} \* \left(\delta_{ij} + X_{ik}\*R_{kj} \right) \*v_j=
\langle u, v \rangle +\sum_{ijk} \*\bar{u_i} \*v_j\*\E(X_{ik}\*R_{kj}).
\end{equation}
Applying the decoupling formula (\ref{decouple}) and (\ref{vecher1}-\ref{vecher2}) to the term $\E(X_{ik}\*R_{kj})$ in (\ref{odin}), we obtain
\begin{align}
\label{dva}
& z\* \E \eta_N = \langle u, v \rangle +\sigma^2 \* \E \left( \eta_N\* \tr_N R \right) +
\frac{\sigma^2}{N} \E \left( \langle u, (R_N(z))^2 v \rangle\right) \\
\label{dva1015}
& + \sum_{i,j}  \frac{\V [(W_N)_{ii}]-2\*\sigma^2}{N}\*\bar{u_i}\* v_j\* 
\E (R_{ii}\*R_{ij}) + r_N, 
\end{align}
where $\eta_N $ is defined in (\ref{etann}),
and $r_N$ contains the third and the fourth cumulant terms corresponding to $p=2$ and $p=3$ in the decoupling formula (\ref{decouple})
for $i\not=k,$ 
and the error terms due to the truncation of the decoupling formula (\ref{decouple}) for $i\not=k$ at $p=3$
and for $i=k$ at $p=1.$

It follows from 
\begin{equation*}
|\V [(W_N)_{ii}]-2\*\sigma^2|\leq const(\sigma^2, c_3), 
\end{equation*}
that the first term in (\ref{dva1015}) 
can be written as the mathematical expectation of $\frac{1}{N}\*\langle a, R_N(z) v\rangle,$
where the vector $a$ has coordinates $(\V [(W_N)_{ii}]-2\*\sigma^2)\* \overline{R_{ii}}\*u_i, \ 1\leq i\leq N.$ 
Using (\ref{resbound}), one obtains by estimating $\|a\| $ from above that
\begin{equation}
\label{porto}
\sum_{i,j}  \frac{\V [(W_N)_{ii}]-2\*\sigma^2}{N}\*\bar{u_i}\* v_j\* 
\E (R_{ii}\*R_{ij})=O\left(\frac{1}{N\*|\Im z|^2}\right).
\end{equation}

The third cumulant terms $(p=2)$ give
\begin{align}
& \frac{1}{2!\*N^{3/2}} [ 4\* \E (\sum_{i,j,k:i\not=k} \kappa_3(i,k)\*\bar{u_i}\*R_{ij}\*R_{ik}\*R_{kk}\*v_j) + 
2\*\E (\sum_{i,j,k:i\not=k} \kappa_3(i,k)\*\bar{u_i}\*R_{ii}\*R_{kk}\*R_{kj}\*v_j) \nonumber \\
\label{tretii}
&+ 2\*\E (\sum_{i,j,k:i\not=k} \kappa_3(i,k)\*\bar{u_i}\*R_{ki}\*R_{ki}\*R_{kj}\*v_j) ],
\end{align}
where by $\kappa_3(i,k)$ we denote the third cumulant of $(W_N)_{ik}.$  We note that
\begin{equation*}
|\kappa_3(i,k)|\leq Const(m_5),
\end{equation*}
uniformly in $i\not=k$ and $N.$
To estimate the absolute value of the first term in (\ref{tretii}), we first sum with respect to $j$ and then 
use the Cauchy-Schwarz inequality and (\ref{resbound}) to obtain
\begin{align}
& |\E \sum_{i,j,k:i\not=k} \kappa_3(i,k)\* \bar{u_i}\*R_{ij}\*R_{ik}\*R_{kk}\*v_j| = |\E \sum_{i\not=k} \kappa_3(i,k)\*
\bar{u_i}\*R_{ik}\*R_{kk}\*(R\*v)_i| \nonumber \\
\label{tretii1}
& \leq Const(m_5)\*\E \left( \sqrt{\sum_k|R_{kk}|^2} \* \sum_{i=1}^N |u_i|\*\|Re_i\| \*|(R\*v)_i| \right) \leq  
\sqrt{N} \*\frac{Const(m_5)}{|\Im z|^3}.
\end{align}
To estimate the absolute value of the second term in (\ref{tretii}), we write
\begin{align}
& |\E \sum_{i,j,k:i\not=k} \kappa_3(i,k)\* \bar{u_i}\*R_{ii}\*R_{kk}\*R_{kj}\*v_j| = 
|\E \sum_{i\not=k} \kappa_3(i,k)\*\bar{u_i}\*R_{ii}\*R_{kk}\*(R\*v)_k| \nonumber \\
\label{tretii2}
& \leq Const(m_5)\* \E \left( \sum_{i,k} |u_i|\* \|R\|^2 \*|(R\*v)_k| \right) \leq 
Const(m_5)\* \sqrt{N}\* \sum_{i} |u_i| \* \|v\| \*\E \|R_N(z)\|^3 \leq  N \*
\frac{Const(m_5)}{|\Im z|^3}.
\end{align}
Finally, we bound the last of the third cumulant terms in (\ref{tretii}) as
\begin{align}
& |\E \sum_{i,j,k:i\not=k} \kappa_3(i,k)\*\bar{u_i}\*R_{ki}\*R_{ki}\*R_{kj}\*v_j| = 
|\E \sum_{i\not=k} \kappa_3(i,k)\*\bar{u_i}\*(R_{ki})^2\*(R \*v)_k| \nonumber \\
\label{tretii3}
& \leq Const(m_5)\*\E \sum_{ik} |u_i|\*|R_{ki}|^2\*\|R_N(z) \| \*\|v\| \leq  \sqrt{N} \*\frac{Const(m_5)}{|\Im z|^3},
\end{align}
where we again used (\ref{resbound}) and 
\begin{equation*}
\sum_k |R_{ki}|^2=\|R_N(z)\* e_i\|^2 \leq \|R_N(z)\|^2 \leq \frac{1}{|\Im z|^2}. 
\end{equation*}
Combining the bounds (\ref{tretii1}-\ref{tretii3}),  we see that the contribution of the 
third cumulant terms to $r_N$ in (\ref{dva}-\ref{dva1015}) is
bounded from above by 
$ O\left(\frac{1}{|\Im z|^3\*\sqrt{N}}\right).$
The fourth cumulant terms give
\begin{align}
\label{chetvert}
& \frac{1}{3!\*N^2} \* [ 18\* \E (\sum_{i,j,k:i\not=k} \kappa_4(i,k)\*\bar{u_i}\*R_{ii}\*R_{ik}\*R_{kk}\*R_{kj}\*v_j) +  
6 \*\E (\sum_{i,j,k:i\not=k} \kappa_4(i,k)\* \bar{u_i}\*R_{ii}\*(R_{kk})^2\*R_{ij}\*v_j) \\
& + 18\*\E (\sum_{i,j,k:i\not=k} \kappa_4(i,k)\* \bar{u_i}\*(R_{ki})^2\*R_{kk}\*R_{ij}\*v_j)  + 
6 \* \E (\sum_{i,j,k:i\not=k} \kappa_4(i,k)\* \bar{u_i}\*(R_{ki})^3\*R_{kj}\*v_j) ]. \nonumber
\end{align}
To estimate the absolute value of the first term in (\ref{chetvert}), we note that
\begin{align}
\label{chetvert1}
& |\E \sum_{i,j,k:i\not=k}  \kappa_4(i,k)\* \bar{u_i}\*R_{ii}\*R_{ik}\*R_{kk}\*R_{kj}\*v_j| =  
|\E \sum_{i\not=k}  \kappa_4(i,k)\*\bar{u_i}\*R_{ii}\*R_{ik}\*R_{kk}\* (R \*v)_k|\\
& \leq Const(m_5)\*\E \left( \sum_{ik} |\bar{u_i}| \* \|R\|^2 \* |R_{ik}| \* |(R \*v)_k| \right)  
\leq \sqrt{N} \*\frac{Const(m_5)}{|\Im z|^4},
\nonumber
\end{align}
where we used the bound $$ \sum_k |R_{ik}|\*|(R \*v)_k| \leq \|R_N(z)\* e_i\|\*\|R_N(z)\*v\|\leq \|R_N(z)\|^2 \*\|v\|, $$  (\ref{resbound}),
and the fact that the fourth cumulants of $(W_N)_{ik}$ are uniformly bounded in absolute value by some constant $Const(m_5).$

To estimate the second term in (\ref{chetvert}), we write
\begin{align}
\label{chetvert2}
& |\E \sum_{i,j,k:i\not=k} \kappa_4(i,k)\*\bar{u_i}\*R_{ii}\*(R_{kk})^2\*R_{ij}\*v_j|=
|\E \sum_{i\not=k} \kappa_4(i,k)\*\bar{u_i}\*R_{ii}\*(R_{kk})^2\*(R \*v)_i| \leq \\
& Const(m_5)\*\E (\sum_k |R_{kk}|^2 \* \|R_N(z)\| \* \sum_i |u_i|\* |(R \*v)_i|) \leq N \*\frac{Const(m_5)}{|\Im z|^4}.
\nonumber
\end{align}
The other two terms in (\ref{chetvert}) are estimated in a similar fashion.  Each of them is 
$O\left(\frac{N \* \|u\| \* \|v\|}{|\Im z|^2}\right).$  Therefore, the fourth cumulant terms give the contribution 
$O\left(\frac{1}{N\*|\Im z|^4}\right)$  to $r_N$ in (\ref{dva}-\ref{dva1015}). 

Finally, we estimate the error terms due to the truncation of the decoupling formula at $p=3$ for $i\not=k$ and at $p=1$ for $i=k.$
Here, we treat the error term due to the truncation of the decoupling formula at $p=3$ for $i\not=k.$   The second error term can be treated in a similar 
way. To estimate the error term, we have to consider expressions of the following form
\begin{equation}
\label{trun}
N^{-5/2} \E  \left (  \sum_{ik} |\kappa_5(i,k)\* \sup |u_i| |R^{(1)}_{ab}|\*|R^{(2)}_{cd}|\*|R^{(3)}_{ef}|\*|R^{(4)}_{pq}|\*|(R^{(5)}v)_s| \right),
\end{equation}
where $a,b,c,d,e,f,p,q,s \in \{i,k\}, \ $ the supremum in (\ref{trun}) is considered over the resolvents 
$R^{(l)}= (z-X_N^{(l)})^{-1}, \ l=1,\ldots 5 $ of 
rank two perturbations $X_N^{(l)}=X_N +x\*E_{ik} $ of $X_N$ with  $(E_{ik})_{jh}=\delta_{ij}\*\delta_{kh} +
\delta_{ih}\*\delta_{kj}. $  Estimating each entry of $R^{(l)}$ by $\frac{1}{|\Im z|}, $ taking into account that
\begin{equation*}
\sum_{i=1}^N |u_i|\leq \sqrt{N}\*\|u\|=\sqrt{N},
\end{equation*}
and using the fact that the fifth cumulants of the off-diagonal entries of $W_N$ are uniformly bounded, 
we bound (\ref{trun}) from above by  $O\left(\frac{1}{N\*|\Im z|^5}\right).\ $   

Combining the estimates of the third and the fourth cumulant terms and the truncation error term, we can rewrite the Master equation (\ref{dva}) as
\begin{equation}
\label{dvaa}
z\* \E \eta_N = \langle u, v \rangle +\sigma^2 \* \E \left( \eta_N\* \tr_N R \right) +
\frac{\sigma^2}{N} \E \left( \langle u, (R_N(z))^2 v \rangle\right) + O\left(\frac{P_5(|\Im z|^{-1})}{\sqrt{N}}\right),
\end{equation}
where we recall that by $P_l$ we denote a polynomial of degree $l$ with positive coefficients that do not depend on $N.$

Since 
\begin{equation*}
|\langle u, (R_N(z))^2 v \rangle| \leq \|u\|\*\|v\|\*\frac{1}{|\Im z|^2},
\end{equation*}
we obtain
\begin{equation}
\label{dvaaa}
z\* \E \eta_N = \langle u, v \rangle +\sigma^2 \* \E \left( \eta_N\* \tr_N R \right) + O\left(\frac{P_5(|\Im z|^{-1})}{\sqrt{N}}\right).
\end{equation}
Finally, we have to estimate the term $\E \left( \eta_N\* \tr_N R \right)$ in the Master equation.  We write
\begin{align}
\label{riorio}
& | \E \left(  \tr_N R \* \langle u, R_N(z) v \rangle \right) - g_{\sigma}(z) \* \E \langle u, R_N(z) v \rangle| \leq
\left(\V\left( \langle u, R_N(z) v \rangle \right) \right)^{1/2} \* \left(\V( \tr_N R) \right)^{1/2}  \\
\label{rioriorio}
& + | g_N(z)-g_{\sigma}(z)| \* \|u\| \* \|v\| \* \frac{1}{|\Im z|}, 
\end{align}
where we use the notation
\begin{equation}
\label{gn}
g_N(z):=\E \tr_N R_N(z).
\end{equation}
The variance $\V (\tr_N R_N(z))$ has been estimated in Proposition 2 of \cite{Sh} as
\begin{equation}
\label{shcherb}
\V (\tr_N R_N(z))= O\left(\frac{1}{|\Im z|^{4}\*N^2}\right),
\end{equation}
uniformly on $\C\setminus\R.$  It follows from the proof of (\ref{shcherb}) that the bound is valid provided the fourth moments of the off-diagonal 
entries are uniformly bounded and the second moments of the diagonal entries are uniformly bounded (\cite{Shch}).
Applying the bound $|\langle u, R_N(z) v \rangle| \leq \frac{\|u\|\*\|v\|}{|\Im z|}= \frac{1}{|\Im z|} \ $  and (\ref{peredacha1}), we obtain
\begin{equation*}
\E \left(  \tr_N R \* \langle u, R_N(z) v \rangle \right)= g_{\sigma}(z) \* \E \langle u, R_N(z) v \rangle +  
O\left(\frac{P_7(|\Im z|^{-1})}{N}\right),
\end{equation*}
uniformly on bounded subsets of $\C\setminus\R.$
This allows us to write
the Master Equation for $\eta_N= \langle u^{(N)}, R_N(z) v^{(N)} \rangle $ as
\begin{equation}
\label{dvasto}
z\* \E \eta_N=\langle u, v \rangle + \sigma^2\* g_{\sigma}(z) \* \E \eta_N + O\left( \frac{P_7(|\Im z|^{-1})}{\sqrt{N}}\right),
\end{equation}
uniformly on bounded subsets of $\C\setminus\R.$
Since $\ z-\sigma^2\* g_{\sigma}(z)=1/g_{\sigma}(z) \ $ and $g_{\sigma}(z) $ is bounded,
we arrive at 
\begin{equation}
\label{MGU}
\E\langle u, R_N(z) v \rangle= g_{\sigma}(z)\* \langle u, v \rangle +  O\left( \frac{P_7(|\Im z|^{-1})}{\sqrt{N}}\right).
\end{equation}
which is exactly the estimate (\ref{peredacha}) of Proposition \ref{proposition:prop1}.

To prove (\ref{peredachauh}), we note that (\ref{riorio}-\ref{rioriorio}), (\ref{shcherb}) and (\ref{peredacha1p}) imply
\begin{equation*}
\E \left(  \tr_N R \* \langle u, R_N(z) v \rangle \right)= g_{\sigma}(z) \* \E \langle u, R_N(z) v \rangle +  
O\left((|z|+M)\*\frac{P_8(|\Im z|^{-1})}{N}\right),
\end{equation*}
uniformly on $\C\setminus\R.$  Therefore, one can rewrite (\ref{dvaaa}) as
\begin{equation}
\label{dvasto111}
z\* \E \eta_N=\langle u, v \rangle + \sigma^2\* g_{\sigma}(z) \* \E \eta_N + O\left( (|z|+M)\*\frac{P_8(|\Im z|^{-1})}{\sqrt{N}}\right),
\end{equation}
uniformly on $\C\setminus\R,$
which implies (\ref{peredachauh}).

Now, we turn our attention to the proof of (\ref{peredachavar}). The key part of the proof is the following lemma.
\begin{lemma}
\label{LemmaVar}
Let 
$X_N=\frac{1}{\sqrt{N}} W_N$ be a random real symmetric (Hermitian) Wigner matrix defined in (\ref{offdiagreal1}-\ref{diagreal2})
((\ref{offdiagherm1}-\ref{diagherm1})), $R_N(z)=(z\*I_N-X_N)^{-1}, \ z \in \C\setminus\R, $ and  $u^{(N)}=(u_1, \ldots, u_N), \ 
v^{(N)}=(v_1, \ldots, v_N) $  be nonrandom unit vectors in $\C^N.$
Then
\begin{align}
\label{101}
& (z - \sigma^2 \*g_N(z)) \V( \langle u^{(N)},R_N(z) v^{(N)} \rangle ) = 
\sqrt{ ( \V( \langle u^{(N)},R_N(z)\* v^{(N)} \rangle )} O\left( \frac{P_3(|\Im z|^{-1})}{\sqrt{N}} \right) + \\
& O\left( \frac{P_6(|\Im z|^{-1})}{N} \right), \nonumber
\end{align}
uniformly in $z\in \C\setminus \R,$ where $g_N(z)$ is defined in (\ref{gn}).
\end{lemma}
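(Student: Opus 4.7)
The plan is to derive a Master equation for the variance that parallels the derivation of (\ref{dva})--(\ref{MGU}) for the mean. Write $\eta_N^\circ := \eta_N - \E\eta_N$. Starting from the resolvent identity $z\eta_N = \langle u, v\rangle + \langle u, X_N R_N(z) v\rangle$, multiply by $\overline{\eta_N^\circ}$ and take expectations; since $\E\,\overline{\eta_N^\circ}=0$, the $\langle u,v\rangle$ term drops out and I obtain
\begin{equation*}
z\,\V(\eta_N) = \sum_{i,j,k} \bar u_i\, v_j\, \E\bigl[X_{ik}\, R_{kj}\, \overline{\eta_N^\circ}\bigr].
\end{equation*}
Then I apply the decoupling formula (\ref{decouple}) to each summand, expanding in cumulants of $W_{ik}$.

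At the second cumulant level, the product rule $\partial(R_{kj}\overline{\eta_N^\circ})/\partial W_{ik}=(\partial R_{kj}/\partial W_{ik})\overline{\eta_N^\circ}+R_{kj}(\partial\overline{\eta_N^\circ}/\partial W_{ik})$ produces two kinds of contributions. The first reproduces the familiar $\sigma^2\E[\eta_N\tr_N R\cdot \overline{\eta_N^\circ}]$ plus the analogues of the $\frac{\sigma^2}{N}\E\langle u,(R_N(z))^2 v\rangle$ and diagonal--correction terms of (\ref{dva})--(\ref{dva1015}). Writing $\tr_N R = g_N(z)+(\tr_N R - g_N(z))$ and $\eta_N = \eta_N^\circ + \E\eta_N$, and applying Cauchy--Schwarz together with the deterministic bound $|\eta_N^\circ|\le 2/|\Im z|$ and Shcherbina's estimate (\ref{shcherb}), this expression becomes $\sigma^2 g_N(z)\V(\eta_N) + O(P_6(|\Im z|^{-1})/N)$. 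The second kind of contribution comes from $\partial\overline{\eta_N^\circ}/\partial W_{ik}$; it carries no factor of $\overline{\eta_N^\circ}$, its index sums collapse via $\sum_j R_{kj}v_j=(R_N v)_k$, and the remaining sums are bounded by products of $\|R_N\|\le 1/|\Im z|$ with the unit norms of $u$ and $v$, giving $O(P_3(|\Im z|^{-1})/N)$.

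Third and fourth cumulant contributions, together with the truncation remainder, are handled exactly as in the proof of (\ref{peredacha}), except that each summand now carries an extra factor of either $\overline{\eta_N^\circ}$ or $\partial\overline{\eta_N^\circ}/\partial W_{ik}$. Summands retaining $\overline{\eta_N^\circ}$ are handled by Cauchy--Schwarz: one has $|\E[A\,\overline{\eta_N^\circ}]|\le \sqrt{\V(\eta_N)}\sqrt{\E|A|^2}$, and the bounds already established in (\ref{tretii1})--(\ref{trun}) show that the relevant $\sqrt{\E|A|^2}$ is $O(P_3(|\Im z|^{-1})/\sqrt N)$, which fits exactly into the first term on the right-hand side of (\ref{101}). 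Summands arising from $\partial\overline{\eta_N^\circ}/\partial W_{ik}$ lose the factor $\overline{\eta_N^\circ}$ but gain an extra $1/\sqrt N$ from the chain rule, and after summation over $(i,k)$ they contribute $O(P_6(|\Im z|^{-1})/N)$. Collecting everything and rearranging yields (\ref{101}).

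The main obstacle will be the bookkeeping: each cumulant order branches via the product rule into many subterms (diagonal vs.\ off-diagonal $(i,k)$, $\overline{\eta_N^\circ}$ vs.\ its $W_{ik}$-derivative, and the various index patterns analogous to (\ref{tretii}) and (\ref{chetvert})), and for each branch one has to verify that it lands either in the $\sqrt{\V(\eta_N)}\cdot O(P_3/\sqrt N)$ bucket or in the $O(P_6/N)$ bucket. The individual estimates themselves are routine, relying only on $\|R_N(z)\|\le 1/|\Im z|$, $\sum_k|R_{ki}|^2\le 1/|\Im z|^2$, and the Shcherbina bound (\ref{shcherb}); the only genuinely new feature compared with the proof of (\ref{peredacha}) is the appearance of the $\partial\overline{\eta_N^\circ}/\partial W_{ik}$ factors, which is precisely what produces the pure $O(P_6/N)$ term on the right-hand side of (\ref{101}).
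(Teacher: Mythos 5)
Your proposal is correct and is essentially the paper's own argument: the same decoupling expansion of $\E[X_{ik}\,R_{kj}\,\overline{\eta_N}]$, the same use of Shcherbina's bound (\ref{shcherb}) and of the deterministic bound on the dominant third-cumulant sum, with your centering by $\overline{\eta_N^\circ}$ at the outset merely packaging in one step the paper's subtraction of (\ref{twotwo}) from (\ref{trip1}) --- this is exactly what turns the $O(N^{-1/2})$ term of type (\ref{tretii2}) into the $\sqrt{\V(\eta_N)}\,O\big(P_3(|\Im z|^{-1})/\sqrt{N}\big)$ contribution in (\ref{101}). One small caveat: the terms in which the derivative falls on $\overline{\eta_N^\circ}$ are $O\big(P_6(|\Im z|^{-1})/N\big)$ not because of an ``extra $1/\sqrt{N}$ from the chain rule'' (the cumulant prefactor at each order is fixed), but because the resulting index sums collapse into scalar products controlled by $\|R_N(z)\|\le |\Im z|^{-1}$, as in (\ref{110})--(\ref{111}) and (\ref{500})--(\ref{503}); the conclusion you draw from this is nevertheless the right one.
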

\begin{proof}
As always, we will suppress the dependence on $N$ in  $u=u^{(N)} $ and $v=v^{(N)},$
and use the notation $\eta_N=\langle u,R_N(z) v \rangle.$
Clearly, $\V(\eta_N)= \E |\eta_N|^2- |\E \eta_N|^2, \ $ and
$ \overline{\langle u,R_N(z) v \rangle}= \langle v,R_N(\bar{z}) u \rangle.\ $
We start with the following form of the Master equation for $\eta_N,$
\begin{align}
\label{two}
& z\* \E \eta_N = \langle u, v \rangle +\sigma^2 \* g_N(z) \* \E \eta_N \\
\label{twoa}
& + \frac{1}{N^{3/2}} \* \E (\sum_{i,j,k:i\not=k} \kappa_3(i,k)\*\bar{u_i}\*R_{ii}(z)\*R_{kk}(z)\*R_{kj}(z)\*v_j) +
O\left(\frac{P_5(|\Im z|^{-1})}{N}\right),
\end{align}
uniformly on $\C\setminus \R.$ 
We singled out in (\ref{twoa}) the only term in $r_N$ which is $O(N^{-1/2}),$ namely
(\ref{tretii2}).  As we have shown above, all other terms in $r_N$ can be estimated as $O\left(\frac{P_5(|\Im z|^{-1})}{N}\right).$
Multiplying both sides of the equation by $\overline{\E \eta_N},$ we obtain
\begin{align}
\label{twotwo}
& z\* |\E \eta_N|^2= \langle u, v \rangle \* \overline{\E \eta_N} +
\sigma^2 \* |\E \eta_N|^2\* g_N(z) \\
& + \frac{1}{N^{3/2}} \* \E \big(\sum_{i,j,k:i\not=k}  \kappa_3(i,k)\*\bar{u_i}\*R_{ii}(z)\*R_{kk}(z)\*R_{kj}(z)\*v_j\big)  \*\overline{\E \eta_N}
+ O\left(\frac{P_6(|\Im z|^{-1})}{N}\right). \nonumber
\end{align}
Our next goal is to obtain the Master equation for $ z\* \E (|\eta_N|^2). $
As before, we use the resolvent identity (\ref{resident}) to write
\begin{align}
\label{odin1}
& z\* \E (|\eta_N|^2)
=z\*\E [\sum_{ij} \bar{u_i} \*R_{ij}(z) \*v_j \* \overline{\eta_N}]= \E [\sum_{ijk} \bar{u_i} \* \left(\delta_{ij} + X_{ik}\*R_{kj}(z) \right) \*v_j \* 
\overline{\eta_N}] \\
\label{odin2}
& = \langle u, v \rangle \* \E \overline{\eta_N} +\sum_{ijk} \*\bar{u_i} \*v_j\*\E(X_{ik}\*R_{kj}(z)\*\overline{\eta_N}). 
\end{align}
Applying the decoupling formula (\ref{decouple}) and (\ref{vecher1}-\ref{vecher2}) 
to the term $\E(X_{ik}\*R_{kj}(z)\*\overline{\eta_N})$ in (\ref{odin1}-\ref{odin2}), we obtain
\begin{align}
\label{dva1}
& z\* \E (|\eta_N|^2) = \langle u, v \rangle \* \overline{ \E \eta_N} +\sigma^2 \* \E \left( |\eta_N|^2 \* \tr_N R_N(z) \right) +
\frac{\sigma^2}{N} \* \E \left( \langle u, (R_N(z))^2 v \rangle\*\overline{\eta_N}\right) \\
\label{dva2}
& \sum_{i,j}  \frac{\V [(W_N)_{ii}]-2\*\sigma^2}{N}\*\bar{u_i}\* v_j\* 
\E [R_{ii}\*R_{ij} \* \overline{\eta_N}] + \frac{\sigma^2}{N} \* 
\E \left(\sum_{i,j,k:i\not=k} \bar{u_i}\*R_{kj}(z)\*v_j \* \frac{\partial \langle v,R_N(\bar{z}) u \rangle}{\partial X_{ik}}\right) \\
\label{dva3}
& + \sum_{i,j}  \frac{\V [(W_N)_{ii}]}{N}\*  
\E \left(\bar{u_i}\*R_{ij}(z)\*v_j \* \frac{\partial \langle v,R_N(\bar{z}) u \rangle}{\partial X_{ii}} \right) +r_N, 
\end{align}
where $r_N$ contains the third and the fourth cumulant terms corresponding to $p=2$ and $p=3$ in (\ref{decouple}) for $k=i$,
and the error due to the truncation of the decoupling formula (\ref{decouple}) at $p=3$ for $k\not=i$ and at $p=1$ for $k=i.$
Clearly,
\begin{equation}
\label{amazon}
\frac{\sigma^2}{N} \* \E \left( \langle u, (R_N(z))^2 v \rangle\*\overline{\eta_N}\right) = O\left(\frac{1}{|\Im z|^3\*N}\right).
\end{equation}
For $k\not=i, $ we have
\begin{align}
\label{102}
& \frac{\partial \langle v,R_N(\bar{z}) u \rangle}{\partial X_{ik}}= (R_N(\bar{z})\bar{v})_i\* (R_N(\bar{z})u)_k + 
(R_N(\bar{z})\bar{v})_k\* (R_N(\bar{z})u)_i, \\
\label{103}
& \frac{\partial^2 \langle v,R_N(\bar{z}) u \rangle}{\partial X_{ik}^2}= 2\* R_{ii}(\bar{z})\* (R_N(\bar{z})\bar{v})_k\* (R_N(\bar{z})u)_k \\
\label{104}
& +2\* R_{ik}(\bar{z})\* (R_N(\bar{z})\bar{v})_i\* (R_N(\bar{z})u)_k + 2\* R_{ik}(\bar{z})\* (R_N(\bar{z})\bar{v})_k\* (R_N(\bar{z})u)_i \\
\label{105}
& + 2\* R_{kk}(\bar{z})\* (R_N(\bar{z})\bar{v})_i\* (R_N(\bar{z})u)_i, \\
\label{106}
& \frac{ \partial (R_N(\bar{z})w)_i  }{\partial X_{ik}}=R_{ii}(\bar{z})\* (R_N(\bar{z})w)_k + R_{ik}(\bar{z})\* (R_N(\bar{z})w)_i.
\end{align}
For $k=i, $ we have
\begin{align}
\label{107}
& \frac{\partial \langle v,R_N(\bar{z}) u \rangle}{\partial X_{ii}}= (R_N(\bar{z})\bar{v})_i\* (R_N(\bar{z})u)_i,\\
\label{108}
& \frac{\partial^2 \langle v,R_N(\bar{z}) u \rangle}{\partial X_{ii}^2}= 2\* R_{ii}(\bar{z})\* (R_N(\bar{z})\bar{v})_i\* (R_N(\bar{z})u)_i, \\
\label{109}
& \frac{ \partial (R_N(\bar{z})w)_i  }{\partial X_{ii}}=R_{ii}(\bar{z})\* (R_N(\bar{z})w)_i.
\end{align}
Using (\ref{102}) and (\ref{107}), one can write the last term in (\ref{dva2}) as 
\begin{align} 
\label{110}
& \frac{\sigma^2}{N}  \* \E \left(\sum_{i,j,k:i\not=k}  \bar{u_i}\*R_{kj}(z)\*v_j \* 
\frac{\partial \langle v,R_N(\bar{z}) u \rangle}{\partial X_{ik}}\right) =\\
\label{111}
& \frac{\sigma^2}{N}\* \E \left(\sum_{i,j,k:i\not=k} \bar{u_i}\*R_{kj}(z)\*v_j \* [(R_N(\bar{z})\bar{v})_i\* (R_N(\bar{z})u)_k + 
(R_N(\bar{z})\bar{v})_k\* (R_N(\bar{z})u)_i ]\right) =O\left(\frac{1}{|\Im z|^3\*N}\right).
\end{align}
The third cumulant terms in $r_N$ in (\ref{dva3}) can be written as
\begin{align}
\label{401}
& \frac{1}{2\* N^{3/2}} \* \sum_{i,j,k:i\not=k} \kappa_3(i,k)\*\bar{u_i}\* v_j\* \E \* 
\left(\frac{\partial^2(R_{kj}(z) \* \langle v,R_N(\bar{z}) u \rangle)}{\partial X_{ik}^2} \right) \\
\label{402}
& = \frac{1}{2\* N^{3/2}} \* \sum_{i,j,k:i\not=k} \kappa_3(i,k)\* \bar{u_i}\* v_j\* 
\E \* \left(\frac{\partial^2 R_{kj}(z)}{\partial X_{ik}^2} \* \langle v,R_N(\bar{z}) u \rangle \right) \\
\label{403}
& + \frac{1}{N^{3/2}} \*  \sum_{i,j,k:i\not=k} \kappa_3(i,k)\* \bar{u_i}\* v_j\*\E \* \left(\frac{\partial R_{kj}(z)}{\partial X_{ik}} \*
\frac{\partial  \langle v,R_N(\bar{z}) u \rangle}{\partial X_{ik}} \right) \\
\label{404}
& +  \frac{1}{2\* N^{3/2}} \* \sum_{i,j,k:i\not=k} \kappa_3(i,k)\* \bar{u_i}\* v_j\* 
\E \* \left(R_{kj}(z) \* \frac{\partial^2  \langle v,R_N(\bar{z}) u \rangle}{\partial X_{ik}^2} \right).
\end{align}

We are going to estimate the terms (\ref{402}-\ref{404}) separately.  We start with the last two.
We claim that both (\ref{403}) and (\ref{404}) are $O\left(\frac{1}{|\Im z|^4\*N}\right)$. 
Indeed, consider first (\ref{403}).  It follows from (\ref{vecher1}-\ref{vecher2}), (\ref{102}), and (\ref{107}),  that it is equal to
\begin{equation}
\label{405}
\frac{1}{2\* N^{3/2}} \* \sum_{i,j,k:i\not=k} \kappa_3(i,k)\*\bar{u_i}\* v_j\* \E \left( [R_{kk}(z)\*R_{ij}(z)+ R_{ik}(z)\*R_{kj}(z)]\* 
[(R_N(\bar{z})\bar{v})_i\* (R_N(\bar{z})u)_k + (R_N(\bar{z})\bar{v})_k\* (R_N(\bar{z})u)_i] \right).
\end{equation}
Let us estimate the term
\begin{equation}
\label{406}
\frac{1}{2\* N^{3/2}} \* \sum_{i,j,k:i\not=k} \kappa_3(i,k)\* \bar{u_i}\* 
v_j\* \E \left(R_{kk}(z)\*R_{ij}(z) \* (R_N(\bar{z})\bar{v})_i\* (R_N(\bar{z})u)_k \right)
\end{equation}
in (\ref{405}).  

We note that the Euclidean norm of the vector in $\C^N$ with the coordinates
$\kappa_3(i,k)\*u_i \* (R_N(\bar{z})\bar{v})_i, \ 1\leq i \leq N, i\not=k,$ and $0$ for $i=k$ is bounded from above by $ \frac{Const(m_5)}{|\Im z|}.$
Thus, it follows from (\ref{resbound}) and $\|v\|=1$ that
\begin{equation}
\label{408}
\big|\sum_{i,j:i\not=k} \kappa_3(i,k)\* \bar{u_i}\* (R_N(\bar{z})\bar{v})_i \* v_j\* R_{ij}(z) \big| \leq \frac{Const(m_5)}{|\Im z|^2}.
\end{equation}
In addition,
\begin{equation}
\label{407}
\sum_{k=1}^N |R_{kk}(z)|\* |(R_N(\bar{z})u)_k| \leq  \frac{1}{|\Im z|}\*\sum_{k=1}^N |(R_N(\bar{z})u)_k|=O\left(\frac{\sqrt{N}}{|\Im z|^2}\right).
\end{equation}

Combining (\ref{407}) and (\ref{408}), we estimate (\ref{406}) as $O\left(\frac{1}{|\Im z|^4\*N}\right).$ 
The other terms in (\ref{405}) can be estimated in a similar way, which implies that
(\ref{403}) is $O\left(\frac{1}{|\Im z|^4\*N}\right)$.  

Now, we turn our attention to (\ref{404}).
Using  (\ref{103}-\ref{105}) and (\ref{108}), one can rewrite (\ref{404}) as
\begin{align} 
\label{500}
& \frac{1}{N^{3/2}} \* \sum_{i,j,k:i\not=k} \kappa_3(i,k)\*\bar{u_i}\* v_j\* 
\E \* [R_{kj}(z) \* R_{ii}(\bar{z})\* (R_N(\bar{z})\bar{v})_k\* (R_N(\bar{z})u)_k ] \\
\label{501}
& + \frac{1}{N^{3/2}} \* \sum_{i,j,k:i\not=k} \kappa_3(i,k)\*\bar{u_i}\* v_j\* 
\E \* [R_{kj}(z) \*R_{ik}(\bar{z})\* (R_N(\bar{z})\bar{v})_i\* (R_N(\bar{z})u)_k ] \\
\label{502}
& + \frac{1}{N^{3/2}} \* \sum_{i,j,k:i\not=k} \kappa_3(i,k)\*\bar{u_i}\* v_j\* 
\E \* [R_{kj}(z) \* R_{ik} (\bar{z})\* (R_N(\bar{z})\bar{v})_k\* (R_N(\bar{z})u)_i ] \\
\label{503}
& + \frac{1}{N^{3/2}} \* \sum_{i,j,k:i\not=k} \kappa_3(i,k)\*\bar{u_i}\* v_j\* 
\E \* [R_{kj}(z) \*R_{kk} (\bar{z})\* (R_N(\bar{z})\bar{v})_i\* (R_N(\bar{z})u)_i].
\end{align}
We estimate (\ref{500}).  The subsums (\ref{501}-\ref{503}) can be estimated in a similar way.
The summation with respect to $j$ in (\ref{500}) gives
\begin{equation*}
\frac{1}{N^{3/2}} \* \sum_{i\not=k} \E \* [\kappa_3(i,k)\*\bar{u_i}\*(R_N(z) v)_k \*R_{ii}(\bar{z}) \* (R_N(\bar{z})\bar{v})_k\* (R_N(\bar{z})u)_k].
\end{equation*}
Now, we estimate
\begin{align}
& \sum_k |\kappa_3(i,k)\*(R_N(z) v)_k|\* |(R_N(\bar{z})\bar{v})_k| \* |(R_N(\bar{z})u)_k| \leq \frac{Const(m_5)}{|\Im z|^3}, \ \text{and} \nonumber \\
& \sum_i |u_i|\* |R_{ii}(\bar{z})| \leq \frac{\sqrt{N}}{|\Im z|}. \nonumber
\end{align}
Combining the last two bounds, we obtain that (\ref{500}) is $O\left(\frac{1}{|\Im z|^4\*N}\right).$ 

Finally, let us estimate (\ref{402}).  It can be written as 
\begin{align}
\label{Tret1}
& \frac{1}{2!\*N^{3/2}} \* 4\* \E \left(\sum_{i,j,k:i\not=k} 
\kappa_3(i,k)\*\bar{u_i}\*R_{ij}(z)\*R_{ik}(z)\*R_{kk}(z)\*v_j \*  \langle v,R_N(\bar{z}) u \rangle \right) \\
\label{Tret2}
& + \frac{1}{2!\*N^{3/2}} \* 2\*\E \left(\sum_{i,j,k:i\not=k} \kappa_3(i,k)\* 
\bar{u_i}\*R_{ii}(z)\*R_{kk}(z)\*R_{kj}(z)\*v_j \*\langle v,R_N(\bar{z}) u \rangle \right) \\
\label{Tret3}
& + \frac{1}{2!\*N^{3/2}} \* 2\*\E \left(\sum_{i,j,k:i\not=k} 
\kappa_3(i,k)\* \bar{u_i}\*R_{ki}(z)\*R_{ki}(z)\*R_{kj}(z)\*v_j \*\langle v,R_N(\bar{z}) u \rangle \right).
\end{align}
The subsums (\ref{Tret1}) and (\ref{Tret3}) are bounded from above by $O\left(\frac{1}{|\Im z|^4\*N}\right).$  
The calculations are very similar to the ones used above and are left to the reader.
The subsum (\ref{Tret2}) can be written as
\begin{equation*}
\E \left(\frac{1}{N^{3/2}}\* \sum_{i,j,k:i\not=k} \kappa_3(i,k)\* \bar{u_i}\*R_{ii}(z)\*R_{kk}(z)\*R_{kj}(z)\*v_j \* \overline{\eta_N}\right).
\end{equation*}
To estimate it, we write
\begin{align}
\label{200}
& \big|\E \big(\frac{1}{N^{3/2}}\* (\sum_{i,j,k:i\not=k} \kappa_3(i,k)\* \bar{u_i}\*R_{ii}(z)\*R_{kk}(z)\*R_{kj}(z)\*v_j) \* \overline{\eta_N})\big) - 
\frac{1}{N^{3/2}} \* \E (\sum_{i,j,k:i\not=k} \kappa_3(i,k)\* \bar{u_i}\*R_{ii}(z)\*R_{kk}(z)\*R_{kj}(z)\*v_j) \* \E \overline{\eta_N}\big| \\
\label{201}
& \leq \frac{1}{N^{3/2}} \* ( \V (\sum_{i,j,k:i\not=k} \kappa_3(i,k)\* \bar{u_i}\*R_{ii}(z)\*R_{kk}(z)\*R_{kj}(z)\*v_j))^{1/2} \* 
\left( \V ({\eta_N})\right)^{1/2}.
\end{align}
It follows from the estimates in (\ref{tretii2}) that one has a deterministic upper bound
\begin{equation*}
|\frac{1}{N^{3/2}} \sum_{i,j,k:i\not=k} \kappa_3(i,k)\* \bar{u_i}\*R_{ii}(z)\*R_{kk}(z)\*R_{kj}(z)\*v_j|\leq const \* \frac{1}{|\Im z|^3\*\sqrt{N}}.
\end{equation*}
Thus,
\begin{align}
\label{202}
& \E \left(\frac{1}{N^{3/2}}\* \sum_{i,j,k:i\not=k} \kappa_3(i,k)\* \bar{u_i}\*R_{ii}(z)\*R_{kk}(z)\*R_{kj}(z)\*v_j \* \overline{\eta_N})\right) \\
\label{203}
& = \frac{1}{N^{3/2}} \* \E (\sum_{i,j,k:i\not=k} \kappa_3(i,k)\* \bar{u_i}\*R_{ii}(z)\*R_{kk}(z)\*R_{kj}(z)\*v_j) \* \E \overline{\eta_N} 
+ O\left(\frac{1}{|\Im z|^3\*\sqrt{N}}\right)\*
\left( \V {\eta_N})\right)^{1/2}.
\end{align}
Combining the estimates (\ref{402}-\ref{203}), we obtain that
the third cumulant term (\ref{401}) contributing to $r_N$ in (\ref{dva1})  can be written as
\begin{align} 
\label{600}
& \frac{1}{N^{3/2}} \* \E (\sum_{i,j,k:i\not=k} \kappa_3(i,k)\*\bar{u_i}\*R_{ii}(z)\*R_{kk}(z)\*R_{kj}(z)\*v_j) \* \E \overline{\eta_N} \\
\label{601}
& + O\left(\frac{1}{|\Im z|^3\*\sqrt{N}}\right)\*
\left( \V {\eta_N})\right)^{1/2} + O\left(\frac{1}{|\Im z|^4\*N}\right).
\end{align}
Somewhat long but straightforward calculations using
(\ref{vecher1}-\ref{vecher2}) and (\ref{102}-\ref{111})
show that the fourth cumulant term in $r_N$ in (\ref{dva1}) can be estimated from above by
$O\left(\frac{1}{|\Im z|^5\*N}\right).\ $ Since the calculations are very similar to those in (\ref{chetvert}- \ref{chetvert2}), 
we leave the details to the reader.  In a similar fashion, the error terms in $r_N,$ due to the truncation of the decoupling formula
at $p=3$ for $i\not=k$ and at $p=1$ for $i=k$ are bounded from above 
by $O\left(\frac{1}{|\Im z|^6\*N}\right).\ $ The considerations are similar to those given in the analysis of
(\ref{trun}).  

Combining (\ref{amazon}), (\ref{110}-\ref{111}), (\ref{600}-\ref{601}), 
and the bounds on the fourth cumulant term and the error terms discussed in the above 
paragraph, one rewrites the Master equation (\ref{dva1}-\ref{dva2}) as
\begin{align}
\label{tri1000}
& z\* \E (|\eta_N|^2) =\langle u, v \rangle \* \overline{ \E \eta_N} +\sigma^2 \* \E \left( |\eta_N|^2 \* \tr_N R_N(z) \right) \\
\label{tri1}
& + \frac{\kappa_3}{N^{3/2}} \* \E (\sum_{ijk} \bar{u_i}\*R_{ii}(z)\*R_{kk}(z)\*R_{kj}(z)\*v_j) \* \E \overline{\eta_N} +
O\left(\frac{1}{|\Im z|^3\*\sqrt{N}}\right)\* \left( \V {\eta_N})\right)^{1/2}\\
\label{tri2}
& + O\left(\frac{P_6(|\Im z|^{-1})}{N}\right).
\end{align}
Using (\ref{shcherb}), we estimate
\begin{equation*}
|\E \left( |\eta_N|^2 \* \tr_N R_N(z) \right) - g_N(z) \*\E|\eta_N|^2| \leq \left(\V(|\eta_N|^2)\right)^{1/2}\* \left(\V \* \tr_N R_N(z)\right)^{1/2}=
O\left(\frac{1}{|\Im z|^4\*N}\right).
\end{equation*}
This allows us to write
\begin{align}
\label{trip1}
& z\* \E (|\eta_N|^2) = \langle u, v \rangle \* \overline{ \E \eta_N} +\sigma^2 \* g_N(z)\* \E |\eta_N|^2 
+ \frac{1}{N^{3/2}} \* \E (\sum_{i,j,k:i\not=k} \kappa_3(i,k)\*\bar{u_i}\*R_{ii}(z)\*R_{kk}(z)\*R_{kj}(z)\*v_j) \* \E \overline{\eta_N}\\
& + O\left(\frac{1}{|\Im z|^3\*\sqrt{N}}\right)\* \left( \V {\eta_N})\right)^{1/2} + O\left(\frac{P_6(|\Im z|^{-1})}{N}\right).\nonumber
\end{align}
Subtracting  the r.h.s. in (\ref{twotwo}) from the r.h.s. in (\ref{trip1}), we obtain (\ref{101}).
Lemma \ref{LemmaVar} is proven.
\end{proof}
Now, we are ready to finish the proof of Proposition \ref{proposition:prop1}.
To obtain the estimate (\ref{peredachavar}) from (\ref{101}), we use the same arguments as in Section 3 of \cite{ORS} and Section 2 of \cite{PRS}.  
We note (see e.g. (3.9) in \cite{ORS}) that
\begin{equation}
\label{vasil}
g_N(z)\*(z-\sigma^2\*g_N(z))=1+ O\left( \frac{P_4(|\Im z|^{-1})}{N}\right).
\end{equation} 
We define 
\begin{equation*}
\mathcal{O}_N:= \{z: |\Im z|> L\*N^{-1/4} \},
\end{equation*}
where the constant $L$ is chosen sufficiently large so that the 
$O\left( \frac{P_4(|\Im z|^{-1})}{N}\right)$ term on the r.h.s. of (\ref{vasil}) is at most $1/2$ in absolute value.
Multiplying both sides of (\ref{101}) by $g_N(z),$  and using (\ref{STbound}), we obtain that
\begin{align}
\label{1011}
& \V( \langle u^{(N)},R_N(z) v^{(N)} \rangle ) = 
\sqrt{ \V( \langle u^{(N)},R_N(z)\* v^{(N)} \rangle )} \* O\left( \frac{P_4(|\Im z|^{-1})}{\sqrt{N}} \right) \\
\label{1012}
& + O\left( \frac{P_7(|\Im z|^{-1})}{N} \right),  \nonumber 
\end{align}
for $z \in \mathcal{O}_N. $
It follows from (\ref{1011}) that
\begin{equation}
\label{610}
\V( \langle u^{(N)},R_N(z) v^{(N)} \rangle )= O\left( \frac{P_8(|\Im z|^{-1})}{N} \right) \ \text{for} \ z \in \mathcal{O}_N.
\end{equation}
On the other hand, if $ |\Im z| \leq L\*N^{-1/4},$ then $\frac{L^4}{N\*|\Im z|^4} \geq 1.$ 
Since $|\langle u^{(N)},R_N(z) v^{(N)} \rangle| \leq  \frac{1}{|\Im z|}, $ we have
\begin{equation}
\label{611}
\V( \langle u^{(N)},R_N(z) v^{(N)} \rangle ) \leq \frac{1}{|\Im z|^2} \leq \frac{L^4}{N \*|\Im z|^6}, 
\end{equation}
for $z$ such that $|\Im z| \leq L\*N^{-1/4}.$
Combining (\ref{610}) and (\ref{611}), we obtain (\ref{peredachavar}).
This finishes the proof of Proposition \ref{proposition:prop1}.
\end{proof}
\section{ \bf{Proof of Theorem 1.6}}
\label{sec:proofquadratic}
\begin{proof}
Our exposition follows closely the ones in Section 3 of \cite{PRS} and Section 4 of \cite{ORS}.
In order to extend the estimates of Proposition \ref{proposition:prop1}
to a more general class of test functions, we use
the Helffer Sj\"{o}strand functional calculus (see \cite{HS}, \cite{D}).

Let $l$ be some non-negative integer, and
$f \in C^{l+1}(\mathbb{R})$ decay at infinity sufficiently fast.
For any self-adjoint operator $X$ we can write
\begin{equation}
 f(X)=-\frac{1}{\pi}\,\int_{\mathbb{C}}\frac{\partial \tilde{f}}{\partial \bar{z}}\, \frac{1}{z-X}\,dxdy 
\quad,\quad\frac{\partial \tilde{f}}{\partial \bar{z}} := \frac{1}{2}\Big(\frac{\partial \tilde{f}}
{\partial x}+i\frac{\partial \tilde{f}}{\partial y}\Big),
\label{formula-H/S}
 \end{equation}
 where:
 \begin{itemize}
\item[i)]
 $z=x+iy$ with $x,y \in \mathbb{R}$;
 \item[ii)] $\tilde{f}(z)$ is the extension of the function $f$ defined as follows
  \begin{equation}\label{a.a. -extension}
  \tilde{f}(z):=\Big(\,\sum_{n=0}^{l}\frac{f^{(n)}(x)(iy)^n}{n!}\,\Big)\sigma(y);
\end{equation}
 here $\sigma \in C^{\infty}(\mathbb{R})$ is a nonnegative function equal to $1$ for $|y|\leq 1/2$ and equal to zero for $|y|\geq 1$.
 \end{itemize}
The integral in (\ref{formula-H/S}) does not depend on the choice of $l$ and the cut-off function (see e.g. \cite{D}).
Using the definition of $\tilde{f}$ in (\ref{a.a. -extension}) one can easily calculate 
\begin{eqnarray}
\frac{\partial \tilde{f}}{\partial \bar{z}}&=&\frac{1}{2}\Big(\frac{\partial \tilde{f}}{\partial x}+i\frac{\partial \tilde{f}}{\partial y}
\Big)\\
& =&\frac{1}{2} \Big(\,\sum_{n=0}^{l}\frac{f^{(n)}(x)(iy)^n}{n!}\,\Big)
i\frac{d\sigma}{dy}+\frac{1}{2}f^{(l+1)}(x)(iy)^l\frac{\sigma(y)}{l!}
\end{eqnarray}
and derive the crucial bound
\begin{equation}\label{estimate-derivative}
\Big|\frac{\partial \tilde{f}}{\partial \bar{z}} (x+iy)\Big|\leq  C_1\* \max\left(|\frac{d^jf}{dx^j}(x)|, \ 1\leq j \leq l+1\right) \*
|y|^l\quad.
\end{equation}
 
For $X=X_N$,  (\ref{formula-H/S}) implies
 \begin{equation}
\label{integral1}
\langle u, f(X_N) v \rangle=-\frac{1}{\pi}\,\int_{\mathbb{C}}\frac{\partial \tilde{f}}{\partial \bar{z}}  \langle u,
R_N(z) v \rangle dxdy. \end{equation}
To prove (\ref{ska2}), we let $l=7$ in (\ref{a.a. -extension}) and assume that $f$ has compact support. It follows from (\ref{peredacha}) that 
\begin{eqnarray}\label{zato}
& &\mathbb{E}\langle u, f(X_N) v \rangle=-\mathbb{E} \frac{1}{\*\pi\* } \* \int_{\mathbb{C}} \frac{\partial \tilde{f}}{\partial \bar{z}}  \* 
\langle u, R_N(z) v \rangle \*
dxdy\\
&= &
-\frac{1}{\pi}\, \langle u, v \rangle \*\,\int_{\mathbb{C}}\frac{\partial \tilde{f}}{\partial \bar{z}}\,   \* g_{\sigma}(z) \* dxdy
-\frac{1}{\pi}\, \,\int_{\mathbb{C}}\frac{\partial \tilde{f}}{\partial \bar{z}}\,  \* \epsilon_{u,v}(z) \* dxdy \label{main equation}\\
&= &
\langle u, v \rangle \*\,\int f(x) d\mu_{sc}(x)
-\frac{1}{\pi}\, \,\int_{\mathbb{C}}\frac{\partial \tilde{f}}{\partial \bar{z}}\,  \* \epsilon_{u,v}(z) \* dxdy\end{eqnarray}
where 
\begin{equation}
\label{spartalporto}
| \epsilon_{u,v}(z)|\leq C_2 \* \frac{1}{\sqrt{N}}\,\frac{1}{|Imz|^7},
\end{equation}
uniformly on $\{z: \Re z \in supp(f), \ |\Im z|\leq 1\},$ and $C_2$ is a constant depending on $supp(f).$
\noindent
We conclude that the second term on the r.h.s. of (\ref{main equation}) can be estimated as follows
\begin{align}
& \big|\frac{1}{\pi}\, \,\int_{\mathbb{C}}\frac{\partial \tilde{f}}{\partial \bar{z}}\,  \* \epsilon_{u,v}(z) \* dxdy \big| 
\leq  \frac{1}{\pi}\, \,\int_{\mathbb{C}}|\frac{\partial \tilde{f}}{\partial \bar{z}}\,  \* \epsilon_{u,v}(z) \*| dxdy\\
& \leq C_1 \*C_2  \* \|f\|_{C^8([-L,L])} \*\frac{1}{\sqrt{N}} 
\int dx \chi_f(x) \int dy \chi_\sigma(y)
\end{align}
where $\chi_f$  and $\chi_{\sigma}$ are the characteristic functions of the support of $f$ and of $\sigma$ respectively, and $L$ is such that
$supp(f)\subset [-L, L].$
This proves (\ref{ska2}). 

To prove (\ref{ska2p}), one considers $f \in C^9(\R)$ (so $l=8$) such that
$\|f\|_{9,1,+}$ is finite.  Using (\ref{peredachauh}), one replaces the estimate (\ref{spartalporto}) with
\begin{equation}
\label{spartalporto2}
| \epsilon_{u,v}(z)|\leq C_3 \* \frac{|z|+M}{\sqrt{N}}\* P_8(|Imz|^{-1}),
\end{equation}
valid on $\C \setminus \R,$ which leads to
\begin{align}
& |\frac{1}{\pi}\, \,\int_{\mathbb{C}}\frac{\partial \tilde{f}}{\partial \bar{z}}\,  \* \epsilon_{u,v}(z) \* dxdy| 
\leq  \frac{1}{\pi}\, \,\int_{\mathbb{C}}|\frac{\partial \tilde{f}}{\partial \bar{z}}\,  \* \epsilon_{u,v}(z) \*| dxdy
\leq C_1\*C_3 \*\|f\|_{9,1,+}\*\frac{1}{\sqrt{N}}.
\end{align}

To prove (\ref{ska1}), we consider $f\in C^5(\R)$ such that  $\|f\|_{5,1}<\infty,$ and let $l=4$ in (\ref{a.a. -extension}).  Then
\begin{align}
& \V (\langle u, f(X_N)\* v \rangle )= \V \left( -\frac{1}{\pi}\,\int_{\mathbb{C}}
\frac{\partial \tilde{f}}{\partial \bar{z}} \langle u, R_N(z)\* v \rangle \*  dxdy \right) \nonumber \\
& =\frac{1}{\pi^2} \,\int_{\mathbb{C}}\,\int_{\mathbb{C}} \frac{\partial \tilde{f}}{\partial \bar{z}} \*
\frac{\partial \tilde{f}}{\partial \bar{w}} \Cov \left(\langle u, R_N(z)\*v \rangle, \langle u, R_N(w)\*v \rangle\right) \*dxdydsdt, \nonumber
\end{align}
where $z=x+iy, \ w=s+it.$
Taking into account (\ref{peredachavar}), we get
\begin{align}
& \V (\langle u, f(X_N)\*v \rangle) \leq \frac{1}{\pi^2}\,\int_{\mathbb{C}}\,\int_{\mathbb{C}}|\frac{\partial \tilde{f}}{\partial \bar{z}}|\* 
|\frac{\partial \tilde{f}}{\partial \bar{w}}| \* \sqrt{\V(\langle u, R_N(z)\*v \rangle)}\* \sqrt{\V(\langle u, R_N(w)\*v \rangle)}  dxdydsdt  \nonumber\\
\label{234}
& \leq \frac{Const}{N} \* \left(\int_{\mathbb{C}}\, \big|\frac{\partial \tilde{f}}{\partial \bar{z}}\big|\*  P_4(|\Im z|^{-1}) dxdy \right)^2. 
\end{align}
Plugging (\ref{estimate-derivative}) with $l=4$ in (\ref{234}), we prove (\ref{ska1}).  
Thus, we have proved the parts (i) and (ii) of Theorem \ref{thm:quadratic}.

Now, let us assume that the marginal distributions of the entries of $W_N$ satisfy the Poincar\'e inequality (\ref{poin}) 
with a uniform constant $\upsilon$ and prove the parts (iii)-(v), i.e. the estimates
(\ref{ska3}), (\ref{ska4}), and (\ref{ska5}).  Since the proof of 
(\ref{ska3}-\ref{ska5}) is very similar to the proof of Proposition 3.3 in \cite{PRS}, we discuss here only the main ingredients.

The first important observation is that if $f(x)$ is a Lipschitz continuous function on $\R$ with the Lipschitz constant $|f|_{\mathcal{L}, \R}$
then on the space of the $N\times N$ real symmetric (Hermitian) matrices, the matrix-valued function $f(X)$ is also 
Lipschitz continuous with respect to the Hilbert-Schmidt norm (\cite{CQT}, Proposition 4.6, c)).  Namely,
\begin{equation}
\label{villa}
\|f(X)-f(Y)\|_{HS}\leq |f|_{\mathcal{L},\R} \* \|X-Y\|_{HS},
\end{equation}
where the Hilbert-Schmidt norm is defined as
\begin{equation}
\|X\|_{HS}=\left(\Tr (|X|^2)\right)^{1/2}.
\end{equation}
In particular, if $u$ and $v$ are unit vectors, then 
\begin{equation}
\label{bragaa}
G(X_N)=\langle u, f(X_N)\*v \rangle)
\end{equation}
is a complex-valued Lipschitz continuous function
on the space of $N\times N$ real symmetric (Hermitian) matrices with the Lipschitz constant 
\begin{equation*}
|G|_{\mathcal{L}}:=sup_{X\not=Y} \frac{|G(X)-G(Y)|}{\|X-Y\|_{HS}}= |f|_{\mathcal{L}, \R}.
\end{equation*}
The second observation is that joint distribution of the matrix entries 
$$ \{ X_{ii}, \ 1\leq i \leq N, \ \sqrt{2}\*X_{jk}, \ 1\leq j<k\leq N \} $$ of $X_N$ satisfies the Poincar\'e inequality
with the constant $\frac{1}{2}\*N\*\upsilon$ since the Poincar\'e inequality tensorizes (\cite{GZ}, \cite{AGZ}).
Therefore, for any complex-valued Lipschitz continuous function of the matrix entries with the Lipschitz constant
$|G|_{\mathcal{L}},$
the distribution of $G(X_N)$ has exponential tails (see e.g. Lemma 4.4.3 and Exercise 4.4.5 in \cite{AGZ}), i.e.
\begin{equation}
\label{BOLUKLON}
\P\left(|G(X_N)-\E G(X_N)|\geq t \right) \leq 2\*K\* \exp\left(-\frac{\sqrt{\upsilon\*N}}{2\*|G|_{\mathcal{L}}}\*t\right),
\end{equation}
where $K$ is a universal constant,
\begin{equation*}
K=-\sum_{i\geq 0} 2^i\*\log(1-2^{-1}\*4^{-i}).
\end{equation*}
This proves (\ref{ska4}).

Applying (\ref{BOLUKLON}) to the spectral norm $\|X\|$ of the matrix $X_N$ and using the universality results for the largest 
eigenvalues (see \cite{J} and references therein),
we obtain
\begin{equation}
\label{bol}
\P\left(| \|X_N\|- 2\*\sigma|\geq t \right) \leq (2\*K +o(1))\* \exp\left(-\frac{\sqrt{\upsilon\*N}}{2}\*t\right).
\end{equation}
and, in particular,
\begin{equation}
\label{bolukl}
\P (\|X_N\|> 2\*\sigma +\delta) \leq (2\*K+o(1))\* \exp\left(-\frac{\sqrt{\upsilon\*N}}{2}\*\delta\right).
\end{equation}

Let $f(x)$ be a real-valued  Lipschitz continuous function on $[-2\sigma-\delta, 2\sigma+\delta].$  Then, we can find a  function
$f_1(x)$ that is Lipschitz continuous on $\R,$ coincides with $f$ on $[-2\sigma-\delta, 2\sigma+\delta],$ and satisfies
$|f_1|_{\mathcal{L},\R}=|f|_{\mathcal{L},\delta}.$ It follows from (\ref{bolukl}) that $\langle u, f(X_N)\*v \rangle)$ does not coincide with
$\langle u, f_1(X_N)\*v \rangle)$ on a set of probability at most $(2\*K +o(1))\* \exp\left(-\frac{\sqrt{\upsilon\*N}}{2}\*t\right),$ which implies 
(\ref{ska3}).  The details are left to the reader.
\end{proof}
\section{\bf{Outliers in the Spectrum of Finite Rank Perturbations of Wigner Matrices}}
\label{sec:outliers}
This section is devoted to the proof of Theorem \ref{thm:fluctoutliers}
\begin{proof}
For $x \in (2\*\sigma, +\infty),$
\begin{equation}
\label{realst}
g_{\sigma}(x)=\frac{x}{2\*\sigma^2}\left(1-\sqrt{1-\frac{4\*\sigma^2}{x^2}}\right)
\end{equation}
is decreasing and $ \ g_{\sigma}(2\*\sigma+0)=1/\sigma.$
Let us choose $\delta>0 $ in such a way that 
\begin{equation}
\label{juar}
\theta_j > \frac{1}{g_{\sigma}(2\*\sigma +2\*\delta)} \  \text{for all} \ 1\leq j \leq J_{\sigma^+},
\end{equation}
i.e. for all $\theta_j $ that correspond to the outliers (so $\theta_j >\sigma$).
Let 
\begin{equation*}
L:= \max(\theta_j, \ 1\leq j \leq J_{\sigma^+}) + 2\*\sigma +2\*\delta.
\end{equation*}
It follows from (\ref{offdiagreal1}-\ref{diagreal2}) (see e.g. \cite{BYin}, \cite{B}, and the proof of Proposition 2.1 in \cite{ORS})
that there exists a random real symmetric Wigner matrix $\tilde{W_N}$ that satisfies (\ref{offdiagreal1}-\ref{diagreal2}),
\begin{equation*}
\P(W_N=\tilde{W_N})\to 1 \ \text{as} \ N\to \infty,
\end{equation*}
and
\begin{equation}
\label{normag}
\|\tilde{W_N}/\sqrt{N}\|\to 2\*\sigma \ \text {a.s.}
\end{equation}
Without loss of generality, we can assume that $\tilde{W_N}=W_N,$ so
\begin{equation}
\label{norma}
\|W_N/\sqrt{N}\|\to 2\*\sigma \ \text {a.s.}
\end{equation}
It follows from the definition of $M_N$ and (\ref{norma}) that,
with probability $1$, the deformed random matrix 
$M_N=\frac{1}{N}\*W_N +A_N $ has no eigenvalues bigger than $L\ $ for all but finitely many $N.$
Let $u=u^{(N)}, $ and $v=v^{(N)}$ be nonrandom unit vectors in $\C^N.$
Define
\begin{align}
\label{xiN}
& \xi_N(x):=\sqrt{N}\* \left(\langle u, R_N(x) \* v \rangle - g_{\sigma}(x) \* \langle u, v \rangle \right), 
\ x\in [2\*\sigma +2\*\delta, \infty), \\
\label{zetaN}
& \zeta_N(x):= \frac{d \*\xi_N(x)}{dx} = -\sqrt{N}\* \left(\langle u, R^2_N(x) \* v \rangle +g'_{\sigma}(x) \* \langle u, v \rangle \right)   , 
\ x\in [2\*\sigma +2\*\delta, \infty), \\
\label{xiNt}
& \tilde{\xi}_N(x):=\sqrt{N}\* \left(\langle u, (h(X_N)\*R_N(x)) v \rangle - g_{\sigma}(x) \* \langle u, v \rangle \right), 
\ x\in [2\*\sigma +2\*\delta, \infty), \\
\label{zetaNt}
& \tilde{\zeta}_N(x):= 
\frac{d \*\tilde{\xi}_N(x)}{dx} = -\sqrt{N}\* \left(\langle u, (h(X_N)\*R^2_N(x)) v \rangle +g'_{\sigma}(x) \* \langle u, v \rangle \right)   , 
\ x\in [2\*\sigma +2\*\delta, \infty),
\end{align}
where $h\in C^{\infty}_c(\R)$ such that
\begin{align}
\label{h1}
& h(x)\equiv 1 \ \text{for} \ x\in [-2\*\sigma -\delta/2, 2\*\sigma +\delta/2], \\
\label{h2}
& h(x) \equiv 0 \ \text{for} \ x\not\in [-2\*\sigma -\delta, 2\*\sigma +\delta].
\end{align}
We claim the following lemma.
\begin{lemma}
\label{Lemma11}
\begin{equation}
\label{cvet}
\P \left(\max \left(  | \zeta_N(x) |, \ x \in [2\*\sigma+2\*\delta, L] \right) \leq \log (N) \*N^{1/6} \right) \to 1,
\end{equation}
where $\zeta_N(x)$ is defined in (\ref{zetaN}).
\end{lemma}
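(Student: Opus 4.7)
The plan is to reduce the problem to the truncated quantity $\tilde{\zeta}_N(x)$ from (\ref{zetaNt}), bound it pointwise using the variance and bias estimates of Theorem \ref{thm:quadratic}, and finally promote the pointwise bound to a uniform one via a net argument. Since $h(X_N)=I_N$ on the event $E_N:=\{\|X_N\|\leq 2\sigma+\delta/2\}$, which has probability tending to $1$ by (\ref{norma}) together with (\ref{h1}), and $\zeta_N(x)=\tilde{\zeta}_N(x)$ on $E_N$ by (\ref{zetaN}) and (\ref{zetaNt}), it suffices to establish the analogous estimate for $\tilde{\zeta}_N(x)$. The key observation is that $h(X_N)R_N^2(x)=f_x(X_N)$, where
\begin{equation*}
f_x(t):=\frac{h(t)}{(x-t)^2}\in C^\infty_c(\R),
\end{equation*}
and that $\int f_x\,d\mu_{sc}=-g'_\sigma(x)$ because $h\equiv 1$ on $[-2\sigma,2\sigma]\supset\mathrm{supp}(\mu_{sc})$. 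Therefore
\begin{equation*}
\tilde{\zeta}_N(x)=-\sqrt{N}\Big(\langle u, f_x(X_N)v\rangle-\langle u,v\rangle\int f_x\,d\mu_{sc}\Big).
\end{equation*}

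For $x\in[2\sigma+2\delta,L]$ we have $|x-t|\geq\delta$ on $\mathrm{supp}(f_x)\subset[-2\sigma-\delta,2\sigma+\delta]$, so the norms $\|f_x\|_{5,1}$ and $\|f_x\|_{C^8([-2\sigma-\delta,2\sigma+\delta])}$ are bounded by a constant $C(\delta,L)$ that is uniform in $x$. Theorem \ref{thm:quadratic}(i) then yields $\V(\tilde{\zeta}_N(x))=N\cdot\V(\langle u,f_x(X_N)v\rangle)=O(1)$, while Theorem \ref{thm:quadratic}(ii) gives $|\E\tilde{\zeta}_N(x)|=O(1)$, both uniformly in $x$. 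Chebyshev's inequality therefore provides the pointwise tail bound
\begin{equation*}
\P\Big(|\tilde{\zeta}_N(x)|>\tfrac{1}{2}\log(N)\,N^{1/6}\Big)\leq \frac{C}{\log^2(N)\,N^{1/3}}
\end{equation*}
for all $N$ large and all $x\in[2\sigma+2\delta,L]$.

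To pass to the uniform bound, I would use that on $E_N$ the function $\tilde{\zeta}_N(x)$ is Lipschitz with a deterministic constant of order $\sqrt{N}$: differentiating gives
\begin{equation*}
\frac{d}{dx}\tilde{\zeta}_N(x)=\sqrt{N}\Big(2\langle u,h(X_N)R_N^3(x)v\rangle-g''_\sigma(x)\langle u,v\rangle\Big),
\end{equation*}
and on $E_N$ one has $\|R_N(x)\|\leq 2/(3\delta)$ for $x\geq 2\sigma+2\delta$, so $|\tilde{\zeta}'_N(x)|\leq C(\delta)\sqrt{N}$ on $E_N$. Choosing a grid $\{x_k\}\subset[2\sigma+2\delta,L]$ with spacing $\epsilon_N=c\log(N)/N^{1/3}$ (so that $C(\delta)\sqrt{N}\,\epsilon_N\leq\tfrac{1}{2}\log(N)\,N^{1/6}$), the number of grid points is $K_N=O(N^{1/3}/\log N)$, and a union bound combined with the pointwise tail above yields
\begin{equation*}
\P\Big(\max_k |\tilde{\zeta}_N(x_k)|>\tfrac{1}{2}\log(N)\,N^{1/6}\Big)\leq K_N\cdot\frac{C}{\log^2(N)\,N^{1/3}}=O(1/\log^3 N)\to 0.
\end{equation*}
On the intersection of this event with $E_N$, the Lipschitz estimate interpolates from $\tilde{\zeta}_N(x_k)$ to any nearby $x$, giving $\max_x|\tilde{\zeta}_N(x)|\leq\log(N)\,N^{1/6}$ for $N$ large, which completes the argument. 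The only mild obstacle is the balancing of the spacing $\epsilon_N$: one cannot afford Chebyshev at too fine a grid, which is precisely why the target bound is $\log(N)\,N^{1/6}$ rather than $O(1)$.
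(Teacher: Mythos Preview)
Your proof is correct and follows essentially the same approach as the paper: reduce to $\tilde{\zeta}_N$ via the cutoff $h$, apply Theorem~\ref{thm:quadratic} (i)--(ii) for uniform pointwise second-moment bounds, use Chebyshev plus a union bound over a grid of mesh $\sim N^{-1/3}$, and interpolate using the deterministic $O(\sqrt{N})$ Lipschitz bound on $\tilde{\zeta}_N$. The only cosmetic differences are your slightly coarser mesh $\epsilon_N=c\log(N)N^{-1/3}$ (the paper uses $N^{-1/3}$) and that you invoke the Lipschitz bound only on $E_N$, whereas it in fact holds deterministically thanks to the factor $h(X_N)$; neither affects the argument.
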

\begin{proof}

It follows from (\ref{norma}) that $\xi_N(x)=\tilde{\xi}_N(x) $ and $\zeta_N(x)=\tilde{\zeta}_N(x)$ for all $x \in [2\sigma+2\delta, L]$ 
and all but finitely many $N$ almost surely.  Thus, it is enough to prove the result
of the lemma for $\tilde{\zeta}_N(x).$
Consider an equidistributed finite sequence 
$$x_0=2\*\sigma+2\*\delta < x_1 <x_2 <\ldots  < x_{l(N)}, $$ 
where 
$x_{i+1}-x_i =N^{-1/3}, \ 0\leq i \leq l(N)-1, \ $ and $ x_{l(N)-1}\leq L < x_l(N). $  Clearly, the number of elements in the 
sequence is $O(N^{1/3}). $
We have
\begin{align}
\label{noch1}
& \P \left \{ \max\left( |\tilde{\zeta}_N(x_i)|, \ 0\leq i \leq l(N)-1 \right) > \frac{1}{2}\*\log (N) \*N^{1/6} \right\} \\
\label{noch2}
& \leq \sum_{i=0}^{l(N)} \* \P \left \{|\tilde{\zeta}_N(x_i)| > \frac{1}{2}\*\log (N) \* N^{1/6} \right\}  \\
\label{noch3}
&  \leq \frac{1}{(\log N)^2\* N^{1/3}} \*
\sum_{i=0}^{l(N)}   \* \left( \V (\tilde{\zeta}_N(x_i)) + \left(\E(\tilde{\zeta}_N(x_i))\right)^2 \right).
\end{align}
It follows from Theorem \ref{thm:quadratic} that 
\begin{align}
\label{davis1}
& \V (\tilde{\zeta}_N(x_i)) = O(1), \\
\label{davis2}
& \E(\tilde{\zeta}_N(x_i)) =  O(1),
\end{align}
uniformly in $0\leq i\leq l(N)$ and $N\geq 1.\ $ Indeed,
\begin{equation}
\label{mnogo10}
\tilde{\zeta}_N(x)= 
\sqrt{N}\* \left(\langle u, f^{(x)}(X_N) v \rangle - \langle u, v \rangle \* \int_{-2\sigma}^{2\sigma} f^{(x)}(t) \*d\mu_{sc}(t)  \right),
\end{equation}
where
\begin{equation}
\label{mnogo11}
f^{(x)}(t)= -h(t)\* \frac{1}{(x-t)^2}
\end{equation}
is a $C^{\infty}_c(\R)$ function such that 
$\|f^{(x)}\|_{5,1} $ and 
$\|f^{(x)}\|_{C^8_c(\R)}$ are uniformly bounded in 
$x\in [2\*\sigma +2\*\delta, \infty).\ $ Thus, (\ref{davis1}-\ref{davis2}) follow from (\ref{ska1}-\ref{ska2}).
The bounds (\ref{noch1}-\ref{noch3}) and (\ref{davis1}-\ref{davis2}) then imply
\begin{equation}
\label{noch4}
\P \left \{ \max\left( |\tilde{\zeta}_N(x_i)|, \ 0\leq i \leq l(N)-1 \right) > \frac{1}{2}\*\log (N) \*N^{1/6} \right\} \leq \frac{const}{(\log N)^2}.
\end{equation}
Taking into account that $|\frac{d\*\tilde{\zeta}_N(x_i)}{dx}| \leq const \* \sqrt{N} \* \|u\|\*\|v\|, \ $ we arrive at (\ref{cvet}).
Lemma \ref{Lemma11} is proven.
\end{proof}
Now, we are ready to start the proof of Theorem \ref{thm:fluctoutliers}.
Let us denote by $u^{(1)}, \ldots, u^{(r)}, $ the orthonormal eigenvectors of $A_N$ corresponding to the non-zero eigenvalues.
We recall that we used the notation 
$\theta_1 > \ldots > \theta_{j_0}=0 > \ldots  > \theta_J$  for the (fixed) eigenvalues 
of $A_N,$ and denoted the (fixed) multiplicity of $\theta_j$ by $k_j$.  The zero eigenvalue $\theta_{j_0}=0 $ has multiplicity $N-r.$ Clearly,
$ \sum_{j\neq j_0} k_j =r.$  Let us denote by $\Theta$ the $r\times r$ diagonal matrix built from the non-zero eigenvalues of $A_N,$
\begin{equation}
\label{thetat}
\Theta:= diag(\theta_1, \ldots, \theta_1, \ldots,  \theta_{j_0-1}, \ldots, \theta_{j_0-1}, \theta_{j_0+1}, \ldots, \theta_{j_0+1}, \ldots, 
\theta_J, \ldots, \theta_J).
\end{equation}
Let us also denote by $U_N $ the $N \times r$ matrix whose columns are given by the orthonormal eigenvectors $u^{(1)}, \ldots, u^{(r)} $ of $A_N.$
Clearly,
\begin{equation}
A_N=U_N \* \Theta \* U_N^*.
\end{equation}
For any $x\in [2\*\sigma+2\delta, L], $ we define the $r\times r$ matrix $\Xi_N(x)$ as follows. Let
\begin{equation}
\label{yalta}
(\Xi_N(x))_{ij}= \xi^{ij}_N(x):=\sqrt{N}\* \left(\langle u^{(i)}, R_N(x) u^{(j)}\rangle - g_{\sigma}(x)\*\delta_{ij} \right), \ 1\leq i,j \leq r.
\end{equation}     

The first step in the proof of Theorem \ref{thm:fluctoutliers} is the following lemma from \cite{BGM}.
\begin{lemma}
\label{Lemma12}
Suppose that $x$ is not an eigenvalue of $X_N.$  Then $x$ is an eigenvalue of $X_N + A_N$ with multiplicity
$n\geq 1$ if and only if $g_{\sigma}(x)$ is an eigenvalue of the $r\times r$ matrix
\begin{equation}
\label{ZN}
Z_N(x):=\Theta^{-1} -\frac{1}{\sqrt{N}} \* \Xi_N(x),
\end{equation}
with the same multiplicity.
\end{lemma}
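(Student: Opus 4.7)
The plan is to reduce the eigenvalue equation for $M_N = X_N + A_N$ at the point $x$ to an eigenvalue equation for the small $r\times r$ matrix $Z_N(x)$, by exploiting the finite rank of $A_N$ in the usual Sylvester/Schur-complement fashion. Since $x\notin\mathrm{spec}(X_N)$, the resolvent $R_N(x)$ exists and
$$xI_N - M_N \;=\; (xI_N - X_N)\bigl(I_N - R_N(x) A_N\bigr).$$
As the left factor is invertible, the multiplicity of $x$ as an eigenvalue of $M_N$ equals the multiplicity of $0$ as an eigenvalue of $I_N - R_N(x) A_N$, i.e.\ the multiplicity of $1$ as an eigenvalue of $R_N(x) A_N$.

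Next I would write $A_N = U_N\Theta U_N^{*}$, where $U_N\in\C^{N\times r}$ has as columns the orthonormal eigenvectors $u^{(1)},\ldots,u^{(r)}$ corresponding to the nonzero eigenvalues of $A_N$. Setting $P:=R_N(x) U_N\Theta\in\C^{N\times r}$ and $Q:=U_N^{*}\in\C^{r\times N}$, we have $R_N(x) A_N = PQ$. The identity $\det(\lambda I_N - PQ)=\lambda^{N-r}\det(\lambda I_r - QP)$ (or, at the level of geometric multiplicity, the mutually inverse maps $v\mapsto Qv$ and $w\mapsto Pw$ between $\ker(I_N - PQ)$ and $\ker(I_r - QP)$) shows that the multiplicity of $1$ for $PQ$ equals that of $1$ for $QP = U_N^{*}R_N(x) U_N\,\Theta$. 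Hence the multiplicity of $x$ as eigenvalue of $M_N$ equals $\dim\ker\bigl(I_r - U_N^{*}R_N(x) U_N\,\Theta\bigr)$; since $\Theta$ is invertible, multiplying on the right by $\Theta^{-1}$ identifies this with $\dim\ker\bigl(\Theta^{-1} - U_N^{*}R_N(x) U_N\bigr)$, with multiplicities preserved.

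Finally I would unpack the definition of $\Xi_N(x)$: its $(i,j)$ entry is $\sqrt{N}\bigl(\langle u^{(i)},R_N(x) u^{(j)}\rangle - g_\sigma(x)\delta_{ij}\bigr)$, so $N^{-1/2}\Xi_N(x) = U_N^{*}R_N(x) U_N - g_\sigma(x) I_r$, whence
$$\Theta^{-1} - U_N^{*}R_N(x) U_N \;=\; Z_N(x) - g_\sigma(x)\, I_r.$$
Thus the kernel appearing in the previous paragraph is precisely the eigenspace of $Z_N(x)$ at eigenvalue $g_\sigma(x)$, and the equivalence with agreement of multiplicities follows.

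The one step requiring genuine care is the ``rectangular'' $PQ$-versus-$QP$ exchange, because the lemma asserts agreement of multiplicities and not merely the existence of an eigenvalue: the plain Sylvester determinant identity $\det(I-PQ)=\det(I-QP)$ is insufficient, and one must either exhibit the explicit mutually inverse intertwining maps between the two kernels (for geometric multiplicity) or invoke the characteristic-polynomial version $\det(\lambda I_N - PQ)=\lambda^{N-r}\det(\lambda I_r - QP)$ (for algebraic multiplicity). Everything else is routine algebraic manipulation.
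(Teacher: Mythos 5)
Your proof is correct and follows essentially the same route as the paper's sketch: the factorization $xI_N-M_N=(xI_N-X_N)(I_N-R_N(x)A_N)$, the low-rank decomposition $A_N=U_N\Theta U_N^*$, the $PQ$-versus-$QP$ exchange, and the final rewriting $\Theta^{-1}-U_N^*R_N(x)U_N=Z_N(x)-g_\sigma(x)I_r$. The only difference is that you make the multiplicity matching explicit (via the mutually inverse kernel maps between $\ker(I_N-PQ)$ and $\ker(I_r-QP)$ and right multiplication by $\Theta^{-1}$), whereas the paper's sketch, which cites \cite{BGM}, argues only at the level of vanishing determinants using $\det(I-BC)=\det(I-CB)$ and leaves the multiplicity statement implicit; since all matrices involved are Hermitian at the real points $x$ where the lemma is applied, your geometric-multiplicity argument indeed suffices.
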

For the convenience of the reader, we sketch the proof of Lemma \ref{Lemma12} below.
\begin{proof}
Let $ x \not\in Sp(X_N). $ Therefore $R_N(x)= (x\*I_N -X_N)^{-1} $ is well defined, and
\begin{equation}
\det(X_N +A_N - x\* I_N)=\det \left( (X_N-x \* I_N)\* (Id_N  -R_N(x) \*A_N) \right).
\end{equation}
We obtain that for $ x \not\in Sp(X_N)$ that $ \ x \in Sp(X_N +A_N) $ if and only if 
\begin{equation}
\det \left(I_N - R_N(x) \*A_N \right)= 
\det\left(I_N - R_N(x) \*U_N \* \Theta \* U_N^* \right)= \det \left( I_r - \Theta \* U_N^* \* R_N(x)\*U_N \right) =0,
\end{equation}
where one uses the identity $ \det (I -B\*C)= \det(I -C\*B).\ $  Rewriting
$$I_r - \Theta\* U_N^* \* R_N(x)\*U_N= -\Theta \* \left(U_N^* \* R_N(x)\*U_N - \Theta^{-1} \right), $$ 
one finishes the proof of Lemma \ref{Lemma12}.
\end{proof}

Proposition \ref{Prop1} plays an important role in the proof of Theorem \ref{thm:fluctoutliers}.  Before we prove
Proposition \ref{Prop1}, we need to introduce some notations and prove Lemma \ref{Lemma14}.

Consider a family of $r \times r$ matrices $Z_N(x)$ defined in (\ref{ZN}) for $x \in [2\*\sigma +2\delta, L]. $ 
Fix an eigenvalue $\theta_j $ of $A_N$ such that $\theta_j> \sigma$ and use the notation $v^{(1)}, \ldots, v^{(k_j)} $ for the eigenvectors of 
$A_N$ that correspond to the 
eigenvalue $\theta_j. \ $  
Without loss of generality we can assume that $j=1.$  We do it just to simplify notations.  The case $1 <j\leq J_{\sigma^+} $ is identical.
We recall that $ \Xi^{(j)}_N $ is defined in (\ref{thetamatrica}) as the 
$k_j\times k_j $ submatrix of $\Xi_N(\rho_j) $ restricted to the rows and columns corresponding to $v^{(i)}, \ 1\leq i \leq k_j.$ 
The central role in the proof of Proposition \ref{Prop1} is played by the following lemma.
\begin{lemma}
\label{Lemma14}
Let $Z_N(x), \ x \in [2\*\sigma +2\delta, L], $ be as in (\ref{ZN}), with $\Xi_N(x)$ defined in (\ref{yalta}), and $\Theta$ defined in (\ref{thetat}).
Let 
\begin{equation}
\label{hart}
z_1(x)\leq z_2(x)\leq \ldots \leq z_r(x)
\end{equation}
be the ordered eigenvalues of $Z_N(x).$  
Then, for sufficiently large constant $C>0, $
\begin{equation}
\label{maroc}
\P \left(|z_i(x)-z_i(y)| \leq C \* \frac{\log N}{N^{1/3}}\*|x-y|, \ \forall x,y \in [2\*\sigma +2\delta, L], 
\ i=1, \ldots, r \right) \to 1,
\end{equation}
as $N \to \infty, $ and
\begin{equation}
\label{tunis}
z_i(\rho_1)= \frac{1}{\theta_1} + O(\frac{1}{\sqrt{N}}), \ 1\leq i \leq k_1,
\end{equation}
in probability, i.e. $\sqrt{N}\*(z_i(\rho_1)-\frac{1}{\theta_1}) $ is bounded in probability, $ \ 1\leq i \leq k_1.$
\end{lemma}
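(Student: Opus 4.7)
The plan is to treat the two claims (\ref{maroc}) and (\ref{tunis}) separately, both reducing to Weyl's inequality: (\ref{maroc}) combined with uniform control on $\frac{d}{dx}\Xi_N(x)$ delivered by Lemma \ref{Lemma11}, and (\ref{tunis}) combined with a standard entrywise $O_p(1)$ bound on $\Xi_N(\rho_1)$ delivered by Theorem \ref{thm:quadratic}.

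For the Lipschitz bound (\ref{maroc}), Weyl's perturbation inequality gives
\begin{equation*}
|z_i(x)-z_i(y)| \;\leq\; \|Z_N(x)-Z_N(y)\|_{op} \;=\; \tfrac{1}{\sqrt{N}}\,\|\Xi_N(x)-\Xi_N(y)\|_{op}.
\end{equation*}
Since $\Xi_N$ is an $r\times r$ matrix with $r$ fixed, its operator norm is controlled by its largest entry, and each entry is differentiable in $x$ with derivative
\begin{equation*}
\tfrac{d}{dx}\xi^{ij}_N(x) \;=\; -\sqrt{N}\langle u^{(i)}, R_N^2(x) u^{(j)}\rangle - \sqrt{N}\, g_\sigma'(x)\,\delta_{ij},
\end{equation*}
which is precisely the quantity $\zeta_N(x)$ defined in (\ref{zetaN}) with the generic unit vectors $u,v$ replaced by the eigenvectors $u^{(i)}, u^{(j)}$ of $A_N$. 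Applying Lemma \ref{Lemma11} to each of the $r^2=O(1)$ such pairs and taking a union bound yields
\begin{equation*}
\sup_{x\in [2\sigma+2\delta,L]}\max_{i,j}\Bigl|\tfrac{d}{dx}\xi^{ij}_N(x)\Bigr| \;\leq\; \log(N)\,N^{1/6}
\end{equation*}
with probability tending to $1$. Invoking the mean value theorem entrywise and dividing by $\sqrt{N}$ gives the Lipschitz constant $O(\log N\,/\,N^{1/3})$.

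For (\ref{tunis}), observe that $g_\sigma(\rho_1)=1/\theta_1$, so each entry $\xi^{ij}_N(\rho_1)$ has bounded expectation by Theorem \ref{thm:quadratic}(ii) and variance $O(1)$ by Theorem \ref{thm:quadratic}(i); hence $\xi^{ij}_N(\rho_1)=O_p(1)$ and, since $r$ is fixed, $\|\Xi_N(\rho_1)\|_{op}=O_p(1)$. Consequently $Z_N(\rho_1)$ is an $O_p(N^{-1/2})$ operator-norm perturbation of $\Theta^{-1}$. The eigenvalue $1/\theta_1$ of $\Theta^{-1}$ has multiplicity $k_1$ and is separated from the remaining eigenvalues $\{1/\theta_j: j\ne 1\}$ by a uniform (in $N$) positive gap, because the $\theta_j$'s are fixed and distinct. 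Weyl's inequality together with this spectral gap then forces exactly $k_1$ eigenvalues of $Z_N(\rho_1)$ to lie within $O_p(N^{-1/2})$ of $1/\theta_1$; under the chosen ordering $z_1\le\ldots\le z_r$ these are the $z_i(\rho_1)$ with $1\le i\le k_1$ (up to the bookkeeping of which consecutive block of ordered eigenvalues corresponds to the $1/\theta_1$-eigenspace of $\Theta^{-1}$), which is the content of (\ref{tunis}).

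The only genuine difficulty is the \emph{uniform} (in $x$) control of the derivative of $\Xi_N$ required by (\ref{maroc}): a pointwise bound of order $o(\sqrt{N})$ would not suffice, and one needs the bound to hold simultaneously on an interval of length $O(1)$. This is exactly what Lemma \ref{Lemma11} furnishes, via the discretization at spacing $N^{-1/3}$ combined with the second-moment estimates from Theorem \ref{thm:quadratic}; once that lemma is in hand and applied to the $r^2$ eigenvector pairs, both (\ref{maroc}) and (\ref{tunis}) follow by routine linear-algebraic perturbation arguments.
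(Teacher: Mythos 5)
Your argument is essentially the paper's own proof: for (\ref{maroc}) you use Weyl's inequality together with the uniform derivative bound of Lemma \ref{Lemma11} applied to the $r^2$ eigenvector pairs (this is exactly how the paper deduces $\|\Xi_N(x)-\Xi_N(y)\|\leq Const\,\log(N)N^{1/6}|x-y|$ with probability tending to one), and for (\ref{tunis}) you use an entrywise $O_p(1)$ bound on $\Xi_N(\rho_1)$, the resulting $O_p(N^{-1/2})$ operator-norm perturbation of $\Theta^{-1}$, and the fixed spectral gap around $1/\theta_1$.

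The one step you gloss over is the application of Theorem \ref{thm:quadratic} to $\xi^{ij}_N(\rho_1)$ itself. The relevant test function $t\mapsto (\rho_1-t)^{-1}$ is neither in $C^5(\R)$ with $\|f\|_{5,1}<\infty$ nor compactly supported, and in fact $R_N(\rho_1)$ need not even be defined on the (vanishing-probability) event that $\rho_1$ lies in the spectrum of $X_N$, so the expectation and variance bounds of Theorem \ref{thm:quadratic} cannot be invoked for $\xi^{ij}_N(\rho_1)$ directly. The paper handles this by working with the cut-off quantities $\tilde{\xi}^{ij}_N(x)=\sqrt{N}\bigl(\langle u^{(i)}, h(X_N)R_N(x)u^{(j)}\rangle - g_{\sigma}(x)\delta_{ij}\bigr)$, where $h$ is the bump function of (\ref{h1})--(\ref{h2}); then the test function $t\mapsto h(t)/(x-t)$ is admissible, its norms are bounded uniformly in $x\in[2\sigma+2\delta,L]$, and $\tilde{\xi}^{ij}_N(x)=\xi^{ij}_N(x)$ for all such $x$ once $\|X_N\|\leq 2\sigma+\delta/2$, an event that (after the replacement of $W_N$ made at the start of the proof of Theorem \ref{thm:fluctoutliers}) holds almost surely for all but finitely many $N$. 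This is the same device already built into Lemma \ref{Lemma11} (compare (\ref{zetaN}) with (\ref{xiNt})), which you rely on for the (\ref{maroc}) part; inserting it also in the (\ref{tunis}) part closes the gap, and with that your proof coincides with the paper's.
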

Below, we prove Lemma \ref{Lemma14}.
\begin{proof}
We claim that (\ref{maroc}) follows from  Lemma \ref{Lemma11}.  Indeed, (\ref{cvet}) and (\ref{zetaN}) imply that
\begin{equation}
\label{jj}
\P \left(\|\Xi_N(x) - \Xi_N(y)\|  \leq Const \* \log(N)\*N^{1/6}\*|x-y|, \ \forall x,y \in [2\*\sigma +2\delta, L] \right) \to 1,
\end{equation}
as $N \to \infty.$  Since $ |z_i(x)-z_i(y)| \leq \|Z_N(x)-Z_N(y)\|=\frac{1}{\sqrt{N}} \* \|\Xi_N(y) - \Xi_N(y)\|, \ 1\leq i \leq r,  $ we conclude that 
(\ref{jj}) implies (\ref{maroc}).

To prove (\ref{tunis}), we use the fact that 
\begin{equation}
\label{andor}
\|Z_N(x)- \Theta^{-1} \|= \| \frac{1}{\sqrt{N}} \* \Xi_N(x)\| =  O(\frac{1}{\sqrt{N}}), 
\end{equation}
in probability.  
Indeed, the entries of the $r\times r$ matrix $\Xi_N(x)$ are bounded in probability 
since the expectation and variance of 
\begin{equation*}
\tilde{\xi}^{ij}_N(x):=\sqrt{N}\* \left(\langle u^{(i)}, h(X_N)\*R_N(x) u^{(j)}\rangle - g_{\sigma}(x)\*\delta_{ij} \right), \ 1\leq i,j \leq r,
\end{equation*}
is uniformly bounded by Theorem \ref{thm:quadratic},
and
\begin{equation*}
\tilde{\xi}^{ij}_N(x)= \xi^{ij}_N(x), \ 1\leq i,j \leq r,
\end{equation*}
almost surely. Thus, $\|\Xi_N(x)\|$ is also bounded in 
probability.  Since the first $k_1$ eigenvalues of $\Theta^{-1}$ are equal to $\frac{1}{\theta_1}, $ we obtain (\ref{tunis}).
Lemma \ref{Lemma14} is proven.
\end{proof}
Now, we are ready to prove Proposition \ref{Prop1}.
\begin{proof}
By Lemma \ref{Lemma12}, the outliers of $X_N+A_N$ are given by those values of \\$x \in [2\*\sigma+\delta, M]$
such that 
\begin{equation}
\label{algeria}
g_{\sigma}(x)= z_i(x) \ \text{ for some}  \ 1\leq i \leq r.
\end{equation}
We recall that $g_{\sigma}(x)$ is a monotonically decreasing function on $[2\*\sigma+\delta, M]$ and 
\begin{equation}
\label{libya}
g_{\sigma}'(x) \leq const(\sigma, \delta, M)<0, \ x \in [2\*\sigma+\delta, M].
\end{equation}
We note that 
\begin{equation}
\label{tictac}
g_{\sigma}(\rho_1)=\frac{1}{\theta_1}.
\end{equation}
Since for  $1\leq i \leq k_1, $ (\ref{tunis}) gives us that $z_i(\rho_1)- g_{\sigma}(\rho_1) = O(\frac{1}{\sqrt{N}})$ in probability,
it follows from (\ref{algeria}) and (\ref{maroc}) that with probability going to $1,$
there exist 
$M > x_1 \geq x_2 \geq \ldots \geq x_{k_1}> 2\*\sigma +\delta$ such that
$g_{\sigma}(x_i)=z_i(x_i), \ 1\leq i \leq k_1,\ $  
and 
\begin{equation}
\label{madagascar}
\sqrt{N} \*|x_i-\rho_1| =O(1), \ 1\leq i \leq k_1,
\end{equation}
in probability.
Applying (\ref{maroc}) one more time, we get that
\begin{equation}
g_{\sigma}(x_i)=z_i(\rho_1) + O\left(\frac{\log (N)\*N^{1/6}}{N}\right)   , \ 1\leq i \leq k_1,
\end{equation}
in probability.  By a standard perturbation theory argument (see e.g. section XII.1 in \cite{RS4}), one proves that the first 
$k_1$ smallest eigenvalues of the matrix $Z_N(\rho_1)$ differ from the 
(increasingly ordered) eigenvalues of the $k_1\times k_1$ matrix $\frac{1}{\theta_1}\*Id -\frac{1}{\sqrt{N}} \* \Xi^{(m)}_N$ by at most 
$O\left(\frac{1}{N}\right),\ $ in probability, where the matrix $\Xi^{(m)}_N$ has been defined in (\ref{thetamatrica}). 
To see this, we use the following standard lemma from the perturbation theory

\begin{lemma}
\label{Lemma177}
Let $B$ be an $n\times n$ real symmetric (Hermitian) matrix that can be written in the block form as $B=(B_{ij})_{i,j=1,2},$ where 
$B_{ij}$ is an $n_i\times n_j$ matrix. Suppose that all eigenvalues of $B_{11}$ are smaller than all eigenvalues of $B_{22}$ and the gap between the 
spectra  of $B_{11}$ and $B_{22}$ is at least $Const>0.$   In addition, suppose that the operator norm of the offdiagonal block $B_{12}$ 
is bounded from above by $\epsilon,$ so that $\|B_{12}\|=\|B_{21}\|\leq \epsilon.$  

Then there exists $const(Const, n)$ such that the first $n_1$ smallest eigenvalues of $B$ differ from the (increasingly ordered) 
eigenvalues of $B_{11}$ by at most $ const\* \epsilon^2.$
\end{lemma}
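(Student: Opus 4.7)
The plan is to approximately block-diagonalize $B$ by a unitary similarity, exploiting the spectral separation between $B_{11}$ and $B_{22}$ to solve a Sylvester equation, and then to invoke Weyl's inequality twice to compare eigenvalues.

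First, I would solve the Sylvester equation $B_{11} X - X B_{22} = -B_{12}$ for an $n_1 \times n_2$ matrix $X$. Since $\mathrm{Spec}(B_{11})$ and $\mathrm{Spec}(B_{22})$ are disjoint with gap at least $\mathrm{Const}$, the linear map $L: X \mapsto B_{11} X - X B_{22}$ on $n_1 \times n_2$ matrices is invertible with $\|L^{-1}\| \leq 1/\mathrm{Const}$ (as seen after simultaneously diagonalizing $B_{11}$ and $B_{22}$, which reduces $L$ to coordinatewise multiplication by the numbers $\mu_i - \nu_j$, each of modulus at least $\mathrm{Const}$); hence $\|X\| \leq \epsilon/\mathrm{Const}$. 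I would then form the skew-Hermitian block matrix
\[
S = \begin{pmatrix} 0 & X \\ -X^* & 0 \end{pmatrix}, \qquad U = \exp(S),
\]
so that $U$ is unitary. The BCH expansion $U^* B U = B + [B,S] + \tfrac{1}{2}[[B,S],S] + \cdots$ has the crucial feature, by the choice of $X$, that the off-diagonal block of $B + [B,S]$ vanishes identically, while the diagonal blocks of $[B,S]$ equal $-B_{12} X^* - X B_{12}^*$ and $B_{12}^* X + X^* B_{12}$, and are therefore of order $\epsilon^2$. Consequently $\|[B,S]\|$ depends on $B$ only through $B_{12}$ and is at most $c(n)\epsilon$, so all higher iterated commutators in the BCH series contribute $O(\epsilon^2)$ or smaller. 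This yields $U^* B U = (\tilde B_{11} \oplus \tilde B_{22}) + E$ with $\|\tilde B_{ii} - B_{ii}\| + \|E\| = O(\epsilon^2)$, where the implicit constant depends only on $n$ and $\mathrm{Const}$.

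Finally, applying Weyl's inequality to the conjugated matrix $U^* B U$ (which has the same spectrum as $B$), the eigenvalues of $B$ lie within $O(\epsilon^2)$ of those of the block-diagonal part $\tilde B_{11} \oplus \tilde B_{22}$, and a second use of Weyl gives that those are $O(\epsilon^2)$-close to the eigenvalues of $B_{11} \oplus B_{22}$. For $\epsilon$ sufficiently small compared to $\mathrm{Const}$, the spectral gap between $\tilde B_{11}$ and $\tilde B_{22}$ is still at least $\mathrm{Const}/2$, so the $n_1$ smallest eigenvalues of $\tilde B_{11} \oplus \tilde B_{22}$ are precisely the eigenvalues of $\tilde B_{11}$, matched in order to those of $B_{11}$ up to $O(\epsilon^2)$. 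The only mildly delicate point, which is the main obstacle to a naive argument, is to check that none of the commutator bounds picks up a factor of $\|B\|$: this is exactly what the Sylvester cancellation is designed to arrange, because after cancellation the commutator $[B,S]$ depends on $B$ solely through the small block $B_{12}$, and all constants depend only on $n$ and $\mathrm{Const}$.
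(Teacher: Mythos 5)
Your proof is correct, but it follows a genuinely different route from the paper. The paper's argument (a sketch) first reduces to the case where $B_{11}$ and $B_{22}$ are diagonal and then builds, for each eigenvalue $\lambda_i$ of $B_{11}$, a first-order approximate eigenvector $\tilde e_i = e_i + \sum_{j>n_1} \frac{\langle (B-\lambda_i)e_i,e_j\rangle}{\lambda_i-\lambda_j}\,e_j$, checks that $\|(B-\lambda_i)\tilde e_i\| \le const\cdot\epsilon^2$ using the gap, and concludes that $B$ has an eigenvalue within $const\cdot\epsilon^2$ of $\lambda_i$; the ordered matching of the $n_1$ smallest eigenvalues is left implicit. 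You instead perform an approximate block-diagonalization: solve the Sylvester equation $B_{11}X - XB_{22}=-B_{12}$ (invertible with norm control thanks to the same gap), conjugate by the unitary $e^S$ with $S=\bigl(\begin{smallmatrix}0 & X\\ -X^* & 0\end{smallmatrix}\bigr)$, observe that the off-diagonal block cancels at first order while the induced diagonal corrections and all higher commutators are $O(\epsilon^2)$, and finish with two applications of Weyl's inequality. This buys a cleaner, fully two-sided statement with the ordering built in (the gap ensures the $n_1$ smallest eigenvalues of the block-diagonalized matrix are exactly those of $\tilde B_{11}$), at the price of heavier machinery (Sylvester solvability, convergence of the commutator series); the paper's route is more elementary but proves less explicitly. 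Two harmless imprecisions in your write-up: the bound $\|X\|\le \epsilon/\mathrm{Const}$ for the operator norm is exact only after passing to the Frobenius norm or invoking a Schur-multiplier bound, costing at most a factor depending on $n$ (allowed here); and the case where $\epsilon$ is not small compared to $\mathrm{Const}$ should be dispatched by noting that then plain Weyl already gives $\epsilon \le \epsilon^2/\mathrm{Const}$, so the claim is trivial there. Both ultimately exploit the spectral gap in the same way, through division by $\lambda_i-\lambda_j$ versus inversion of the Sylvester operator.
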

\begin{proof}
We sketch the main idea of the proof for the convenience of the reader.
First of all, one can assume that the eigenvalues of $B_{11}$ are degenerate.
In addition, one can assume that the blocks $B_{11}$ and $B_{22}$ are diagonal matrices.  If not, one can simultaneously diagonalize them without 
changing the bound on the operator norms of the off-diagonal blocks.  Thus, $B_{11}=diag(\lambda_1, \lambda_2, \ldots, \lambda_{n_1}), $ and
$B_{22} =diag(\lambda_{n_1+1}, \ldots, \lambda_{n}).$   Then the eigenvectors of $B_{11}$ are given by $e_1, \ldots e_{n_1}$, and the eigenvectors
of $B_{22}$ are given by $e_{n_1+1}, \ldots e_{n}$, where $e_i, \ 1\leq i\leq n$ are the standard basis vectors in $\C^n.$  We recall that
$$ \lambda_1\leq \lambda_2 \leq \ldots \lambda_{n_1} < \lambda_{n_1+1}\leq \ldots \lambda_{n}, $$ and $\lambda_{n_1+1} - \lambda_{n_1}>Const.$
Then it is easy to see that
$$ \tilde{e_1}=e_1 +\sum_{j>n_1} \frac{\langle (B-\lambda_1) e_1, e_j\rangle}{\lambda_1-\lambda_j} \*e_j$$ is an approximate eigenvector of $B$ with the 
approximate eigenvalue $\lambda_1$ such that
$$(B-\lambda_1) \tilde{e_1}= \sum_{j>n_1} \frac{\langle (B-\lambda_1) e_1, e_j\rangle}{\lambda_1-\lambda_j} \*(B-\lambda_j) \* e_j.$$
Since $\|(B-\lambda_j) \* e_j \|\leq \epsilon, \ 1\leq j\leq n, $ and $\lambda_j-\lambda_1\geq Const, \ n_1<j\leq n,$ we obtain that
$$ \|(B-\lambda_1) \tilde{e_1}\| \leq const \* \epsilon^2,$$
where $const$ depends just on $Const$ and $n.$
The last inequality also holds (albeit with a different value of $const$) if one replaces $\tilde{e_1}$ by the normalized vector $\frac{\tilde{e_1}}
{\|\tilde{e_1}\|}.$ 
Thus, $B$ has an eigenvalue in the $const \* \epsilon^2$- neighborhood of $\lambda_1.$
\end{proof}
The result of the lemma can be immediately extended by induction to the case of $m\times m$ block matrices 
$B=(B_{ij})_{1\leq i,j\leq m}.$  To apply it in our setting, we note that
the $k_1\times k_1$ matrix $\frac{1}{\theta_1}\*Id -\frac{1}{\sqrt{N}} \* \Xi^{(m)}_N$ is the 
upper-left block of $Z_N(\rho_1).$  The other diagonal blocks of $Z_N(\rho_1)$   are given by 
$\Xi^{(i)}_N, \ 1\leq i \leq m-1 $ defined in (\ref{thetamatrica}).
Since the operator norms of 
the off-diagonal blocks of $Z_N(\rho_1)$ are $O(N^{-1/2}) (see (\ref{andor})),$ the desired statement follows.

Therefore, we have 
\begin{equation}
\label{saraevo}
g_{\sigma}(\rho_1) + \left(g_{\sigma}'(\rho_1) +o(1)\right)\* (x_i-\rho_1) = \frac{1}{\theta_1} - \frac{1}{\sqrt{N}}\* y_i 
+ O\left(\frac{\log (N)\*N^{1/6}}{N}\right), \ 1\leq i \leq k_1,
\end{equation}
where $y_1\geq \ldots \geq y_{k_1}$ are the eigenvalues of the matrix $\Xi^{(m)}_N.$
The result of Proposition \ref{Prop1} now follows from (\ref{saraevo}) and (\ref{tictac}).
\end{proof}
Since the eigenvalues of the matrix $\Xi^{(m)}_N(\rho_1)$ are bounded in probability, the first part of Theorem \ref{thm:fluctoutliers}, i.e.
(\ref{past}), follows from (\ref{uzheuzhe}) in Proposition \ref{Prop1}.

Now assume that the marginal distributions of the matrix entries of $W_N$ satisfy the Poincar\'e inequality (\ref{poin}) with a uniform constant
$\upsilon.$  Our goal is the almost sure bound 
(\ref{smeshno}) on the rate of convergence of the outliers.  
We note that one can improve (\ref{maroc}) and (\ref{tunis}) in Lemma \ref{Lemma14} as follows.
Applying (\ref{ska4}-\ref{ska5}) to (\ref{mnogo10}) and taking into account that $\xi_N(x)=\tilde{\xi}_N(x) $ and $\zeta_N(x)=\tilde{\zeta}_N(x)$ 
for all $x \in [2\sigma+2\delta, L]$  on a set of probability $1- O\left(\exp\left(-\frac{\sqrt{\upsilon\*N}\*\delta}{2}\right)\right), $
one proves
\begin{align}
\label{cvet1}
& \max \left(  | \zeta_N(x) |, \ x \in [2\*\sigma+2\*\delta, L] \right) \leq Const_1 \log (N), \\
\label{tunis1}
& \max \left( |z_i(x)-\frac{1}{\theta_1}|,\ x \in [2\*\sigma+2\*\delta, L] \right)   \leq Const_2\*\frac{\log(N)}{\sqrt{N}}, \ 1\leq i \leq k_1,
\end{align}
almost surely, where $Const_1>0, \ Const_2>0 $ are sufficiently large, improving (\ref{cvet}).
Reasoning as before,
(\ref{cvet1}) implies that
\begin{equation}
\label{maroc1}
|z_i(x)-z_i(y)| \leq Const_3 \* \frac{\log(N)}{\sqrt{N}}\*|x-y|, \ \forall x,y \in [2\*\sigma +2\delta, L], 
\ i=1, \ldots, r,
\end{equation}
almost surely for sufficiently large constant $Const_3>0.$
Thus, we have
\begin{equation}
\label{saraevo1}
g_{\sigma}(\rho_1) + \left(g_{\sigma}'(\rho_1) +o(1)\right)\* (x_i-\rho_1) = \frac{1}{\theta_1} - \frac{1}{\sqrt{N}}\* y_i 
+ O\left(\frac{\log (N)}{N}\right), \ 1\leq i \leq k_1, 
\end{equation}
almost surely, which implies (\ref{smeshno}) since $g_{\sigma}(\rho_1)=\frac{1}{\theta_1}.$
Theorem \ref{thm:fluctoutliers} is proven.
\end{proof}
\section{\bf{Proof of Theorems 1.3, 1.4, and 1.7}}
\label{sec:loc}
In this section, we prove Theorems \ref{thm:caseA}, \ref{thm:caseA1}, and \ref{thm:Local}.
We start with Theorem \ref{thm:caseA}.
\begin{proof}
Let $\theta_j>\sigma$ be an eigenvalue of $A_N$ with the multiplicity $k_j.$
Let us assume that Case A takes place. Thus, without loss of generality, we can assume that the eigenvectors of $A_N$ corresponding to the eigenvalue 
$\theta_j$ belong to $Span(e_1, \ldots, e_{K_j}), $ where $K_j$ is a fixed positive integer.  As always, we consider the real symmetric case.  
The treatment of the Hermitian case is very similar.
Consider a $K_j\times k_j$ matrix $U_j$
such that the ($K_j$-dimensional) columns of $U_j$ are filled by the first $K_j$ coordinates of the $k_j$ orthonormal vectors of $A_N$ corresponding to 
the eigenvalue $\theta_j.$  We recall that the remaining $N-K_j$ coordinates of these orthonormal vectors are zero.
Let us denote by $R_N^{(K_j)}(z)$ the upper-left $K_j\times K_j$  submatrix of the resolvent matrix $R_N(z)=(z\*I_N-X_N)^{-1}.$
Finally, we define the random matrix-valued field
\begin{equation}
\label{upsN}
\Upsilon_N(z):=\sqrt{N}\*\left(R_N^{(K_j)}(z)-g_{\sigma}(z)\*I_{K_j}\right), \ z\in \C \setminus [-2\sigma, 2\sigma].
\end{equation}
It follows from the definition that $\Upsilon_N(z)$ is a random function on $\C \setminus [-2\sigma, 2\sigma]$ with values in the space of complex 
symmetric $K_j\times K_j$ matrices.  In particular, $\Upsilon_N(x)$ is real symmetric for real $x \in \R \setminus [-2\sigma, 2\sigma].$
It follows from (\ref{thetamatrica}), (\ref{upsN}) and the definition of $U_j$ in the paragraph above (\ref{upsN}) that
\begin{equation}
\label{tor}
\Xi^{(j)}_N=  U_j^* \*\Upsilon_N(\rho_j) \* U_j.
\end{equation}
We recall that Theorem 1.1 in \cite{PRS} states that $\Upsilon_N(z)$ converges weakly in finite-dimensional distributions to a random field
\begin{equation}
\label{ups}
\Upsilon(z):= g^2_{\sigma}(z) \*\left(W^{(K_j)}+ Y(z)\right),
\end{equation}
where $W^{(K_j)}$ is the $K_j\times K_j$ upper-left corner submatrix of the Wigner matrix $W_N$ (\ref{offdiagreal1}-\ref{diagreal2}), and 
$Y(z)=(Y_{il}(z)), \ Y_{il}(z)=Y_{li}(z), \ 1\leq i,l\leq K_j, $ is a centered Gaussian random field with the covariance matrix given 
in (1.18)-(1.23) in \cite{PRS}.  In particular, for real $x \in \R \setminus [-2\sigma, 2\sigma],$ one has that the entries 
$Y_{il}, \ 1\leq i\leq l\leq K_j$ are independent centered Gaussian real random variables such that
\begin{align}
\label{Disp1} 
& \V(Y_{ii}(x))= \kappa_4(\mu)\* g_{\sigma}^2(x) - 2\*\sigma^4\* g_{\sigma}'(x), \ 1\leq i \leq K_j, \\
\label{Disp2}
& \V(Y_{il}(x))= -\sigma^4\* g_{\sigma}'(x), \ 1\leq i <l \leq K_j.
\end{align}
Now, Theorem \ref{thm:caseA} follows from Proposition \ref{Prop1} in this paper, and Theorem 1.1 in \cite{PRS}, since
\begin{equation}
g_{\sigma}(\rho_j)=\frac{1}{\theta_j}, \ \text{and}  \ \ g_{\sigma}'(\rho_j)=  -\frac{1}{\theta_j^2 - \sigma^2}. 
\end{equation}
Theorem \ref{thm:caseA} is proven. The proof of Theorem \ref{thm:caseA1} is very similar to the given proof of Theorem \ref{thm:caseA}.  One has to use
Theorems 4.1 and 4.2 and Remark 4.1 in \cite{ORS} that generalize Theorems 1.1 and 1.5 in \cite{PRS} to the non-i.i.d. case, and replace
$\kappa_4(\mu)$ in (\ref{Disp1}) with $\kappa_4(i), \ 1\leq i \leq K_j.$
\end{proof}
Now, we turn to the proof of Theorem \ref{thm:Local}.
\begin{proof}
Recall that we extended the definition of $\Upsilon(z) $ 
to that of an infinite-dimensional  matrix $ \Upsilon(z)_{pq}, \ 1\leq p,q <\infty, \ $ in the paragraphs above the formulation of
Theorem \ref{thm:Local}.
We employ a standard approximation argument.  For simplicity, we assume that $\Im z \not=0.\ $  If $z$ is real, one has to replace $R_N(z)$ by
$h(X_N)\* R_N(z),$ where $h \in C^{\infty}_c(\R)$ is defined in (\ref{h1}-\ref{h2}).
Let $n$ be a sufficiently large fixed positive integer and consider $N\geq n.$
It follows from Theorem 1.1 in \cite{PRS} (see the proof of Theorem \ref{thm:caseA} above) that
the joint distribution of 
$$\langle u^{(n)}_p, \Upsilon_N(z) u^{(n)}_q \rangle= 
\sqrt{N}\*\left(\langle u^{(n)}_p, R_N(z) u^{(n)}_q \rangle- g_\sigma(z)\* \langle u^{(n)}_p, u^{(n)}_q \rangle \right), \ \ 1\leq p,q\leq l, $$
converges weakly to the joint distribution of
$ \ \langle u^{(n)}_p, \Upsilon(z) u^{(n)}_q \rangle, \ \ 1\leq p,q\leq l.$
Choosing $n$  sufficiently large, we can make
\begin{equation}
\label{torpedozil}
\V \left(\langle u^{(n)}_p, \Upsilon(z) u^{(n)}_q \rangle - \langle u_p, \Upsilon(z) u_q \rangle \right), \ \ 1\leq p,q\leq l,
\end{equation}
and
\begin{equation}
\label{torpedo}
\V \left(\langle u^{(n)}_p, \Upsilon_N(z) u^{(n)}_q \rangle - \langle u^{(N)}_p, \Upsilon(z)_N u^{(N)}_q \rangle \right), \ \ 1\leq p,q\leq l,
\end{equation}
arbitrary small uniformly in $N\geq n.$
Indeed, the variance in (\ref{torpedozil}) is bounded by $ O(\|u_p-u^{(n)}_p\|^2 +\|u_q-u^{(n)}_q\|^2) \ $ since the entries of $\Upsilon(z)$ are 
i.i.d random variables with bounded variance on the diagonal and i.i.d. random variables with bounded variance off the diagonal.
In addition,
\begin{align}
& \langle u^{(n)}_p, \Upsilon_N(z) u^{(n)}_q \rangle - \langle u^{(N)}_p, \Upsilon_N(z) u^{(N)}_q \rangle=
\langle u^{(n)}_p - u^{(N)}_p, \Upsilon_N(z) u^{(n)}_q \rangle \nonumber \\ 
& + \langle u^{(N)}_p, \Upsilon_N(z) (u^{(n)}_q -u^{(N)}_q) \rangle \nonumber
\end{align}
and we can use the bounds (\ref{ska1}) and (\ref{ska2}) in Theorem  \ref{thm:quadratic} rewritten as
\begin{align}
\label{ska11}
& \V \left(\langle u, f(X_N) \* v \rangle \right) 
\leq const_1 \* \frac{\|f\|_{C^5_c(\R)} \*\|u\|^2 \* \|v\|^2}{N}, \\
& \label{ska21}
\E \langle u, f(X_N) \* v \rangle - \langle u, v \rangle \* \int_{-2\sigma}^{2\sigma} f(x) \* d \mu_{sc}(dx)|
\leq const_2 \* \|f\|_{C^8_c(\R)} \*\frac{\|u\|\*\|v\|}{\sqrt{N}},
\end{align}
to show that 
\begin{equation*}
\V\left(\langle u^{(n)}_p - u^{(N)}_p, \Upsilon_N(z) u^{(n)}_q \rangle\right), 
\ \V\left(\langle u^{(N)}_p, \Upsilon_N(z) (u^{(n)}_q -u^{(N)}_q) \rangle\right)
\end{equation*}
are arbitrary small (uniformly in $N$) provided one chooses $n$ 
sufficiently large.  This finishes the proof.
\end{proof}
Theorem \ref{thm:Local} allows the following extension of Theorem \ref{thm:caseA}:
\begin{remark}
\label{rem}
Let $u^{(1)}, \ldots, u^{(r)} \ $ be a set of orthonormal vectors in $ l^2(\N) $ such that 
\begin{equation}
\label{pochtiort}
\|u^{(p)}-u^{(p)}_N \|=o(N^{-1/2}), \ 1\leq p\leq r,
\end{equation}
where $u^{(p)}_N $ denotes the projection of $u^{(p)}$ onto the subspace spanned by the first $N$ standard basis vectors $e_1, \ldots, e_N. $
Let $U_N$ be the $N\times r$ matrix whose columns are given by the vectors $u^{(1)}_N, \ldots, u^{(r)}_N.\ $ Also denote by $\Theta $ the $r\times r $
diagonal matrix 
$$ \Theta=diag(\theta_1, \ldots, \theta_1, \ldots, \theta_{j_0-1}, \ldots,  \theta_{j_0-1}, \theta_{j_0+1}, \ldots,  \theta_{j_0+1}, \ldots
\theta_J, \ldots, \theta_J).$$
Finally, define 
$$A_N=U_N\*\Theta \*U_N^*.$$
The result of Theorem \ref{thm:caseA} can be extended for such $A_N$, with the matrix $V_j $ given by 
\begin{equation}
\label{strahov}
\theta_j^2\*\langle u^{(p)}, \Upsilon(\theta_j)\* u^{(q)} \rangle, \ \ 1\leq p,q \leq k_j.
\end{equation}
\end{remark}

\section{\bf{Appendix}}
\label{section:appendix}
The appendix contains several basic formulas used throughout the paper.  

First, we recall the decoupling formula from \cite{KKP}.
Let $\xi$ be a real-valued random variable with $p+2$ finite moments, and $\phi$ be a function from $\C \to \R$ with $p+1$ continuous and bounded 
derivatives.  Then 
\begin{equation}
\label{decouple} 
 \E(\xi \phi(\xi)) = \sum_{a=0}^p \frac{\kappa_{a+1}}{a!} \E(\phi^{(a)}(\xi)) + \epsilon,  
 \end{equation}
where $\kappa_a$ are the cumulants of $\xi$, $|\epsilon| \leq C \sup_t \big| \phi^{(p+1)}(t) \big| \E(|\xi|^{p+2})$, and $C$ depends only on $p$. If
$\xi$ is a centered Gaussian random variable, the decoupling formula (\ref{decouple}) becomes
\begin{equation}
\label{decoupleGauss} 
 \E(\xi \phi(\xi))= \V(\xi)\* \E(\phi'(\xi)),
\end{equation}
and can be immediately verified by integration by parts.

Next, we write a basic resolvent identity.
For any two Hermitian matrices $X_1$ and $X_2$ and non-real $z$ we have:
\begin{equation}
\label{resident}
 (zI - X_2)^{-1} = (zI - X_1)^{-1} - (zI - X_1)^{-1}(X_1 - X_2) (zI - X_2)^{-1}  
\end{equation}
As a corollary of (\ref{resident}), one has the following formulas.
If $X$ is a real symmetric matrix with resolvent $R$ then  
\begin{align}
\label{vecher1}
& \frac{\partial R_{kl}}{\partial X_{pq}} = R_{kp}\*R_{ql} +R_{kq} \*R_{pl}, \ \text{for} \ p \not = q, \\
\label{vecher2}
& \frac{\partial R_{kl}}{\partial X_{pp}} = R_{kp}\*R_{pl} .
\end{align}
In a similar way, if $X$ is a Hermitian matrix then 
\begin{align}
& \frac{\partial R_{kl}}{\partial \Re X_{pq}} = R_{kp}\*R_{ql} +R_{kq} \*R_{pl}, \  p \not = q, \nonumber \\
& \frac{\partial R_{kl}}{\partial \Im X_{pq}} = i\*\left(R_{kp}\*R_{ql} - R_{kq}\*R_{pl} \right), \  p \not = q,  \nonumber \\
& \frac{\partial R_{kl}}{\partial X_{pp}} = R_{kp}\*R_{pl}. \nonumber
\end{align}

Finally, we will use the following properties of the resolvent:
\begin{equation}
\label{norma17}
\| R_N(z) \|=\frac{1}{dist(z, Sp(X))},
\end{equation}
where by $Sp(X) $ we denote the spectrum of a real symmetric (Hermitian) matrix $X.$
The bound (\ref{norma17}) implies
\begin{equation}
\label{resbound}
\| R_N(z) \| \leq |\Im(z)|^{-1}. 
\end{equation}
Therefore, all entries of the resolvent matrix are bounded by $|\Im(z)|^{-1}$.
In a similar fashion, we have the following bound for the Stieltjes transform, $g(z)$, of any probability measure:
\begin{equation}
\label{STbound}
 | g(z) | \leq |\Im(z)|^{-1} 
\end{equation}


\begin{thebibliography}{99}
\bibitem{AGZ} Anderson G.W., Guionnet A., and Zeitouni O. {\em An Introduction to Random Matrices}, Cambridge Studies in Advanced Mathematics 118, 
Cambridge University Press, New York, 2010.

\bibitem{B} Bai, Z. D. {\em Methodologies in spectral analysis of large-dimensional random 
matrices, a review}. Statist. Sinica 9, 611-677 (1999).

\bibitem{BY} Bai, Z.D. and Yao, J.,  {\em Central limit theorems for eigenvalues in a spiked population model}, Ann. I.H.P.-Prob.et Stat. 44, 447-474
(2008).

\bibitem{BYin} Bai, Z.D. and Yin Y.Q., {\em Necessary and sufficient conditions for the almost sure convergence of the largest eigenvalue of Wigner 
matrices}, Ann. Probab. 16, 1729-1741, (1988).

\bibitem{BBP} Baik J., Ben Arous G., and P\'ech\'e S., {\em Phase transition of the largest eigenvalue for non-null complex sample covariance matrices}, 
Ann. Probab. 33, 1643-1697 (2005).

\bibitem{BS} Baik J. and Silverstein J.W., {\em Eigenvalues of large sample covariance matrices of spiked population models}, J. of Multi. Anal. 97, 
1382-1408 (2006).

\bibitem{BG} Ben Arous G. and Guionnet A., {\em Wigner matrices}, in Oxford Handbook on Random Matrix Theory, edited by 
Akemann G., Baik J. and Di Francesco P., Oxford University Press, New York, 2011.

\bibitem{BR} Benaych-Georges F. and R. Rao, {\em The eigenvalues and eigenvectors of finite, low rank perturbations of large random matrices},
available at arXiv:0910.2120v3.

\bibitem{BGM} Benaych-Georges F., Guionnet A., and Maida M., {\em Fluctuations of the extreme eigenvalues of finite rank deformations of random matrices},
available at arXiv:1009.0145.

\bibitem{BGM1} Benaych-Georges F., Guionnet A., and Maida M., {\em Large deviations of the extreme eigenvalues of random deformations of matrices},
available at arXiv:1009.0135v2.

\bibitem{CD} Capitaine M. and Donati-Martin C., {\em Strong asymptotic freeness of Wigner and Wishart matrices}, Indiana Univ. Math. J. 56, 767-804 (2007).

\bibitem{CDF} Capitaine M., Donati-Martin C., and F\'eral D., {\em The largest eigenvalue of finite rank deformation of large Wigner matrices: 
convergence and non universality of the fluctuations}, Ann. Probab., 37, (1), 1-47 (2009). 

\bibitem{CDF1} Capitaine M., Donati-Martin C., and F\'eral D., {\em Central limit theorems for eigenvalues of deformations of Wigner matrices},
to appear in Ann. I.H.P.-Prob.et Stat.,  available at arXiv:0903.4740 (final version available at http://www.math.univ-toulouse.fr/$\sim$capitain/).

\bibitem{CDF2} Capitaine M., Donati-Martin C., F\'eral D., and F\'evrier M. 
{\em Free convolution with a semi-circular distribution and eigenvalues of spiked deformations of Wigner matrices},
available at arXiv:1006.3684.

\bibitem{CQT} Chen X., Qi H., and Tseng P., {\em Analysis of nonsmooth symmetric-matrix-valued functions with applications to semidefinite
complementary problems}, SIAM J. Optim., 13, 960-985 (2003).

\bibitem{D} Davies, E.B., {\em The functional calculus}, J. London Math. Soc., 52, 166-176 (1995).

\bibitem{Dur} Durrett R., {\em Probability.  Theory and Examples}, 4th ed., Cambridge University Press, New York, 2010.

\bibitem{EYY} Erd\"os L., Yau H.-T., and Yin J., {\em Rigidity of eigenvalues of generalized Wigner matrices},
available at arXiv:1007.4652.

\bibitem{FP} F\'eral D. and P\'ech\'e S., {\em The largest eigenvalue of rank one deformation of large Wigner matrices}  Comm. Math. Phys.  272,  
no. 1, 185-228 (2007).

\bibitem{FK} F\"uredi Z. and Koml\'os J., {\em The eigenvalues of random symmetric matrices}, Combinatorica 1, 233-241 (1981).

\bibitem{GZ} Guionnet A. and Zegarlinski B., {\em Lectures on logarithmic Sobolev inequalities}, Seminaire de Probabilit\'{e}s XXXVI, 
Lecture Notes in Mathematics 1801, Paris, Springer 2003.

\bibitem{HT} Haagerup U. and Thorbjornsen S., {\em A new application of random matrices: Ext($C_{red}^*  (F_2 )$) is not a group}, Ann. Math. 162, 
711-775 (2005).

\bibitem{HS} Helffer B. and Sj\"{o}strand J., {\em Equation de Schr\"{o}dinger avec champ magnetique et equation de Harper}, Schr\"{o}dinger
Operators, Lecture Notes in Physics 345, 118-197, (eds. H. Holden and A. Jensen)  Springer, Berlin 1989. 



\bibitem{J} Johansson, K, Universality for certain Hermitian Wigner matrices under weak moment conditions, available at arXiv:0910.4467.

\bibitem{KKP} Khorunzhy A., Khoruzhenko B. and Pastur L., {\em Asymptotic properties of large random matrices 
with independent entries}, J. Math. Phys. 37, 5033-5060 (1996). 

\bibitem{Mai}  Maida, M.  {\em Large deviations for the largest eigenvalue of rank one deformations of Gaussian ensembles}, Electron. J. Probab. 12, 
1131-1150 (2007).

\bibitem{M} Mehta, M.I. {\em Random Matrices}. New York, Academic Press, 1991.

\bibitem{ORS} O'Rourke S., Renfrew D., and Soshnikov A., {\em On fluctuations of matrix entries of regular functions of Wigner matrices
with non-identically distributed entries}, available at arXiv:1104.1663v3.

\bibitem{Paul} Paul, D.  {\em Asymptotics of sample eigenstructure for a large dimensional spiked covariance model},  Statist. Sinica 17, no.4, 1617-1642 
(2007).

\bibitem{P} P\'ech\'e, S. {\em The largest eigenvalue of small rank perturbations of Hermitian random matrices}.  Probab. Theory Related Fields  134,  
no. 1, 127-173 (2006).

\bibitem{PRS} Pizzo, A., Renfrew D., and Soshnikov A., {\em Fluctuations of matrix entries of regular functions of Wigner matrices}, available
at arXiv:1103.1170v3.

\bibitem{RS4} Reed M. and Simon B., {\em Methods of Modern Mathematical Physics. IV: Analysis of Operators}, Academic Press, New York, 1978.

\bibitem{Sh} Shcherbina M., {\em Central limit theorem for linear eigenvalue statistics of Wigner and sample covariance random matrices}, available at
arXiv:1101.3249v1.

\bibitem{Shch} Shcherbina M., letter from March 1, 2011.

\bibitem{T} Tao T., {\em Outliers in the spectrum of iid matrices with bounded rank perturbations},
available at arXiv:1012.4818v2.

\bibitem{TW1} Tracy C., and Widom. H, {\em Level spacing distributions and the Airy kernel}, Commun. Mathematical Physics 159, 151-174 (1994).

\bibitem{TW2} Tracy C., and Widom. H, {\em On orthogonal and symplectic matrix ensembles}, Commun. Mathematical Physics 177, 727-754 (1996).
\end{thebibliography}
\end{document}